\newcommand{\diag}[1]{\mathrm{diag}\begin{pmatrix}#1\end{pmatrix}}% Define a shortcut for a diagonal matrix
\numberwithin{equation}{section}
\newtheorem{theorem}{Theorem}[section]
\newtheorem{lemma}[theorem]{Lemma}
\newtheorem{proposition}[theorem]{Proposition}
\newtheorem{corollary}[theorem]{Corollary}
\theoremstyle{definition}
\newtheorem{definition}[theorem]{Definition}
\theoremstyle{remark}
\newtheorem{remark}[theorem]{Remark}
\newtheorem{example}[theorem]{Example}
\newcommand{\R}{\mathbb{R}}  % Real numbers
\newcommand{\C}{\mathbb{C}}  % Complex numbers
\newcommand{\N}{\mathbb{N}}  % Natural numbers
\newcommand{\Z}{\mathbb{Z}}  % Integers
\title{Frames for source recovery from non-uniform dynamical samples}
\author{Ruchi and Lalit Kumar Vashisht\\
\small{Department of Mathematics, University of Delhi}
}
\date{January 26, 2025}
\begin{document}
	
	\maketitle
	
	\begin{abstract}
Motivated by the work of Aldroubi et al., we investigate the stability of the source term of the discrete dynamical system indexing over a non-uniform discrete set arising from spectral pairs in infinite-dimensional separable Hilbert spaces. Extending results due to Aldroubi et al., firstly, we give a necessary and sufficient condition for the recovery of the source term in finitely many iterations. Afterwards, we derive a necessary condition for the stability of the source term in finitely many iterations when it belongs to the closed subspace of an infinite-dimensional separable Hilbert space. Finally, we give a necessary and sufficient condition for the recovery of the source term in infinitely many iterations. \\
		\textbf{AMS Subject Classification (2020):} 93B30, 42C15,  42C30.\\
		\textbf{Keywords:} Sampling theory; forcing;  frames; reconstruction.
	\end{abstract}
	
	\section{Introduction}
	\label{sec:intro}
	The interplay between frame theory and dynamical sampling has emerged as a riveting area of research in functional analysis, offering profound insights into signal processing, data science, and applied mathematics. Frames were pioneered by Duffin and Schaeffer in a fundamental published paper \cite{NHFS} in 1952 while studying some deep problems in nonharmonic Fourier series. In a nutshell, they endeavored to find condition(s) under which a countable family of complex exponentials in the signal space $L^2(-\gamma, \gamma)$, $\gamma>0$, form a Riesz basis or are complete in $L^2(-\gamma, \gamma)$. The mathematical definition of frames in separable Hilbert spaces is given in Sec. 2 of the paper. Frames were reviewed in a book by Young \cite{Young}. For fundamental properties of frames and their applications in pure mathematics and engineering science, we refer to texts \cite{BHan, C.Heil, KPSA, NPS}.
	
In one of the directions of applications of frames in engineering science, dynamical sampling is a relatively recent framework that addresses the challenge of reconstructing signals from spatially and temporally evolving measurements. The origins of dynamical sampling trace back to the seminal work of Aldroubi et al. \cite{DSTS,ERSS} in 2013, where they investigated conditions under which a function can be reconstructed from coarse samples of its iterates under a linear operator. Recent research has also accentuated applying dynamical sampling to new challenges, such as real-time data streaming, dynamic environments, and non-stationary signals. These advancements have permeated the applicability of dynamical sampling techniques in areas ranging from medical diagnostics and environmental monitoring to robotics and artificial intelligence. Aldroubi et al. studied applications of Bessel sequences, frames, and Riesz bases in sampling theory in a series of papers, see \cite{DSIS,  DSTS, DSSIO, DS} and many references therein.
	
On the other hand,  Fuglede \cite{CSPDG}  in the study of commuting self-adjoint partial differential operators introduced the concept of spectral pairs.
By using the theory of spectral pairs in the Lebesgue space  $L^2(\mathbb{R})$,  Gabardo and Nashed \cite{NUMRA} generalized the notion of multiresolution analysis (MRA). They introduced the concept of non-uniform multiresolution analysis. In non-uniform multiresolution analysis, the corresponding translation set may not be a group but is  a spectrum associated with one-dimensional spectral pairs.
It is applicable in input/output models based on shifts of data points, where shifts may not be uniform in nature. Frame conditions for the discrete Gabor system related to spectral pairs can be found in \cite{DVNGF}.

	\subsection{Related work and contribution}
	Dynamical sampling problems can be stratified in three types depending on the specific variable of interest. When the objective is to determine the conditions on spatial sampling set for recovering initial state, the problem is referred to as the \textbf{space-time trade-off} \cite{DSTS,ICNO,DS,SFBF,DSFIS,CDDS}. On the other hand, when the aim is to determine the conditions for the recovery of dynamic operator $A$, the problem is categorized as \textbf{system identification} \cite{KSMDS,DSARN,ESADS,SIDS} and the problem of identifying specific types of source terms that drive the dynamics of the system is known as \textbf{source recovery problems}, see \cite{RRDSE,PADBF,DSRSCS,CFSRDS} and many  references therein.
	
		The formal development of dynamical sampling as a distinct area began in the early 2010s. In 2015, Aldroubi et al. introduced the concept of sampling in evolving systems in their landmark work in \cite{ERSS}. They studied sampling and reconstruction for functions evolving under the action of linear operators, laying the groundwork for future studies.  Aldroubi, Cabrelli, and Molter \cite{DS}  developed the initial framework for finite-dimensional spaces and certain classes of self-adjoint operators in infinite-dimensional spaces, providing the theoretical foundation for subsequent studies. In \cite{DSFIS}, Cabrelli, Molter and Petrosyan extended this work to investigate dynamical sampling on finite index sets. They provided a theoretical solution to the problem where a signal evolving over time through iterates of an operator can be recovered from its space-time samples. By 2021, the focus shifted towards the reconstruction of bandlimited functions from space–time samples. Researchers analyzed the problem of reconstructing a bandlimited function from the space–time samples of its states resulting from convolution with a kernel, addressing challenges in stable reconstruction \cite{SFBF}. In recent work,   Huang, Neuman, Tang, and Xie explored convolutional dynamical sampling, focusing on the recovery of bandlimited heat diffusion fields from space-time samples in both finite and infinite-dimensional cases. Their work extended previous findings and emphasized the density of space-time sampling sets \cite{CDSR}.  For applications of sampling in sensor networks, it is necessary to mention the published papers  \cite{SSRS, SRDL}.

 Most recently,  Aldroubi et al. \cite{CFSRDS} advanced the field by characterizing frames for stable recovery of source terms in dynamical systems. 	Specifically, they consider the discrete dynamical system of the form $$x_{n+1}=Ax_{n}+w, n\in \N,w \in W,$$ where $x_{n} \in \mathcal{H}$ is the $n$-th state of the system, and $\mathcal{H}$ is a separable Hilbert space. The operator $A \in \mathcal{B}(\mathcal{H}), x_{0} \in \mathcal{H}$ is the initial state, and $W$ is a closed subspace of $\mathcal{H}$. Time-space sample measurements
\begin{align*}
D(x_{0},w)=[\langle x_{n},g_{j} \rangle]_{n \geq 0,\,j \geq 1}
\end{align*}
	are obtained by inner products $\langle x_{n},g_{j} \rangle$ with vectors of a Bessel system $\{g_{j}\}_{j \geq 1} \subset \mathcal{H}$.
	It is given in \cite{CFSRDS} that frame conditions related to the above dynamical system can be used in environmental monitoring applications for identifying the locations and magnitude of pollution sources. Aldroubi et al., in \cite{CFSRDS}, focused on the stable recovery of a source term by using time-space sample measurements formed by inner products with vectors from a given Bessel system in separable Hilbert spaces.  They proved necessary and sufficient conditions for the stable recovery of constant source terms from time-space samples in separable Hilbert spaces.

On the other hand, Gabardo and Nashed, in \cite{NUMRA}, generalized the concept of multiresolution analysis (MRA) by using the theory of spectral pairs in the Lebesgue space  $L^2(\mathbb{R})$. The theory of spectral pairs and non-uniform wavelets is instrumental in developing advanced numerical methods for solving partial differential equations, particularly in cases where traditional grid-based techniques fail or are computationally expensive. In this framework, the translation set is not necessarily a group but is instead a spectrum associated with one-dimensional spectral pairs. They provided a characterization for the existence of non-uniform wavelets associated with non-uniform multiresolution analysis in the  space $L^2(\mathbb{R})$. In \cite{JN}, they established an analogue to Cohen’s condition for non-uniform multiresolution analysis, highlighting that, unlike in traditional MRA, non-uniform multiresolution analysis may not always produce the associated wavelets. Recently, in \cite{DVNGF}, two  authors of this paper studied
non-uniform frames with Gabor structure  in the discrete signal space $\ell^2(\Lambda)$, where $\Lambda$ is a non-uniform lattice (see below). Recall that some  real life problems, for example, sampling,  data analysis,  signal analysis, etc., are based on  discrete data points, where the indexing set may not be a uniform lattice. In these directions, wavelets and frames over non-uniform indexing play a significant role \cite{DVNGF}. For applications of discrete frames with different structure, we refer to \cite{BHan} and many references therein.

Motivated by the above work, we study  stability of the source term of the non-uniform discrete dynamical system in infinite dimensional separable Hilbert spaces. More precisely, we deliberate on indexing the dynamical system and sampling vectors by sets arising from spectral pairs, which is not necessary a group but a spectrum which is based on the theory of spectral pairs  \cite{CSPDG, NUMRA}.  Extending results due to Aldroubi et al. \cite{CFSRDS}, we give a  necessary and sufficient condition for the recovery of the source term in finitely many iterations. Afterwards, we derive a necessary condition for the stability of the source term in finitely many iterations when it belongs to the closed subspace of a infinite dimensional separable  Hilbert space. A  necessary and sufficient condition for the recovery of the source term in infinitely many iterations is also given.
	
	\subsection{Structure of the paper} To make  the paper  self-contained, Section \ref{Section 2} gives basic notations and foundational results relevant to Bessel sequences, discrete Hilbert frames in separable Hilbert spaces and spectral pairs. In Section \ref{Section 3}, we present the main results of the paper. First, we fabricate the general setup for the non-uniform discrete dynamical system. Theorem \ref{Th3.8} highlights a fundamental property of the space $\mathcal{B}^{s}(\ell^2(\Lambda),\ell^\infty(\Lambda)) $ and its pivotal role in ensuring stability during the reconstruction process. Theorem \ref{Th3.12} provides a characterization of the stable recovery of the source term in finitely many iterations. In Theorem \ref{Th3.14}, we give a necessary condition for the stable recovery of the source term when it belongs to the closed subspace of infinite-dimensional separable Hilbert space. A characterization of the stable recovery of the source term in infinitely many iterations is given in Theorem \ref{Th3.17}. As a fruitage of Theorem \ref{Th3.14}, for the characterization of stable reconstruction of the non-uniform discrete dynamical system \eqref{Dynamical System} generated by an operator $A$ with spectral radius $\rho(A)<1$,  see Theorem \ref{Th3.19}. Illustrative examples a are also given to support our results.

	\section{Preliminaries}\label{Section 2}
	Throughout the paper, $\mathbb{Z}$ $\mathbb{N}$, $\mathbb{R}$ and $\mathbb{C}$ denote the set of all integers, natural numbers, real numbers and complex numbers, respectively. Symbol $\mathcal{H}$ denotes an infinite-dimensional separable (real or complex) Hilbert space.
	
	\subsection{Discrete Hilbert frames}
	A countable collection of vectors  $\{f_{k}\}_{k \in \mathbf{I}}$ in
	$\mathcal{H}$  is called a  \textbf{discrete frame} (or discrete Hilbert frame) for $\mathcal{H}$ if there exist positive real scalars $ \alpha_{0}$ and $\beta_{0}$ such that
	\begin{align}
		\alpha_{0}\norm{f}^{2} \leq \sum_{k \in \mathbf{I}}\abs{\langle f, f_{k} \rangle}^{2} \leq \beta_{0}\norm{f}^{2} \,\,\, \text{for all}\,\, f \in \mathcal{H}.
		\label{eq:2.1}
	\end{align}
	The constants $\alpha_{0}$ and $\beta_{0}$ are called \textbf{lower} and \textbf{upper frame bounds} of $\{f_{k}\}_{k \in \mathbf{I}}$, respectively. If $\alpha_{0}=\beta_{0}$, then $\{f_{k}\}_{k \in \mathbf{I}}$ is called tight frame and if $\alpha_{0}=\beta_{0}=1$, then it is called the \textbf{Parseval frame} for $\mathcal{H}$. If $\{f_{k}\}_{k \in \mathbf{I}}$ only satisfies the upper inequality in \eqref{eq:2.1}, then we say $\{f_{k}\}_{k \in \mathbf{I}}$ is a \textbf{Bessel sequence} in $\mathcal{H}$ with \textbf{Bessel bound} $\beta_{0}$.
	Associated with the Bessel sequence $\{f_{k}\}_{k \in \mathbf{I}}$, the bounded linear operator $T:\ell^{2}(\N) \rightarrow \mathcal{H}$ given by
\begin{align*}
T\{c_{k}\}_{k \in \mathbf{I}}=\sum_{k \in \mathbf{I}}c_{k}f_{k}
\end{align*}
is called the \textbf{synthesis operator} or the \textbf{pre-frame operator} and the adjoint operator of $T$ is the map $T^*:\mathcal{H} \rightarrow \ell^{2}(\N)$ given by
\begin{align*}
 T^*(f)=\{\langle f,f_{k} \rangle\}_{k \in \mathbf{I}},
 \end{align*}
and known as  the \textbf{analysis operator}. The \textbf{frame operator} is the map  $\Theta = TT^*:\mathcal{H} \rightarrow \mathcal{H}$ given by
\begin{align*}
\Theta f=\sum_{k \in \mathbf{I}}\langle f,f_{k} \rangle f_{k}.
\end{align*}
If  $\{f_{k}\}_{k \in \mathbf{I}}$ is a frame for $\mathcal{H}$,  then the frame operator $\Theta$ is bounded, linear and invertible on $\mathcal{H}$.

The following lemma will be used in the sequel.
	
	\begin{lemma}\cite[p. 221]{C.Heil}\label{L:2.1}
		Let $\{f_{k}\}_{k \in \mathbf{I}}$ be a frame  for $\mathcal{H}$ and let $f \in \mathcal{H}$. If $f$ has a representation $f=\sum\limits_{k \in \mathbf{I}}c_{k}f_{k}$ for some coefficients $\{c_{k}\}_{k \in \mathbf{I}}$, then
		\begin{align*}
			\sum_{k \in \mathbf{I}}{\abs{c_{k}}}^{2}=\sum_{k \in \mathbf{I}}{\abs{\langle f,S^{-1}f_{k}\rangle}}^{2} + \sum_{k \in \mathbf{I}}{\abs{c_{k}-\langle f,S^{-1}f_{k}\rangle}}^{2}
		\end{align*}
	\end{lemma}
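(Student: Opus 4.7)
The plan is to reduce the identity to the Pythagorean theorem in $\ell^2(\mathbf{I})$. Setting $d_k := \langle f, S^{-1} f_k \rangle$ and $e_k := c_k - d_k$, the asserted equality is precisely $\|c\|_{\ell^2}^2 = \|d\|_{\ell^2}^2 + \|e\|_{\ell^2}^2$. Since $c_k = d_k + e_k$, it suffices to show that the sequences $\{d_k\}$ and $\{e_k\}$ are orthogonal in $\ell^2(\mathbf{I})$, i.e.\ $\sum_{k} d_k \overline{e_k} = 0$.

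Carrying this out, I would proceed in three steps. First, recalling that $\{S^{-1} f_k\}_{k \in \mathbf{I}}$ is the canonical dual frame, the reconstruction formula gives $f = \sum_{k} d_k f_k$; comparing with the hypothesis $f = \sum_{k} c_k f_k$ yields the null relation $\sum_{k} e_k f_k = 0$ in $\mathcal{H}$. Second, exploiting the self-adjointness of the frame operator $S$ (and hence of $S^{-1}$), I rewrite $\overline{d_k} = \overline{\langle f, S^{-1} f_k \rangle} = \langle f_k, S^{-1} f \rangle$, so that
\[
\sum_{k} e_k \overline{d_k} \;=\; \sum_{k} e_k \langle f_k, S^{-1} f \rangle \;=\; \Big\langle \sum_{k} e_k f_k,\; S^{-1} f \Big\rangle \;=\; 0,
\]
where the interchange of sum and inner product is legitimate because the series converges in $\mathcal{H}$ and the inner product is continuous. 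Taking conjugates gives the desired orthogonality $\sum_{k} d_k \overline{e_k} = 0$. Finally, expanding $|c_k|^2 = |d_k|^2 + d_k \overline{e_k} + \overline{d_k} e_k + |e_k|^2$ and summing over $k$, the two cross terms vanish by the orthogonality just established, producing the claimed identity.

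The main subtlety, rather than a deep obstacle, is convergence bookkeeping. The sequence $\{d_k\}$ always lies in $\ell^2(\mathbf{I})$ by the Bessel bound applied to the dual frame, so if $\{c_k\} \notin \ell^2(\mathbf{I})$ then $\{e_k\} \notin \ell^2(\mathbf{I})$ either, and the identity reads $\infty = \infty$ trivially. The interesting case is therefore $\{c_k\} \in \ell^2(\mathbf{I})$, where all manipulations, including the swap of $\sum_{k}$ with $\langle \cdot,\, S^{-1} f \rangle$, are fully justified by the continuity of the synthesis operator associated with the Bessel system $\{f_k\}$.
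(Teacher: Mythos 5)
The paper does not prove this lemma; it is quoted from Heil's \emph{A Basis Theory Primer} without argument. Your proof is correct and is essentially the standard one given there: decompose $c_k = d_k + e_k$ with $d_k = \langle f, S^{-1}f_k\rangle$, observe that $\sum_k e_k f_k = 0$ forces $\{e_k\}$ to be $\ell^2$-orthogonal to $\{d_k\}$ via the self-adjointness of $S^{-1}$ and continuity of the inner product, and conclude by the Pythagorean identity, with the non-$\ell^2$ case handled trivially since $\{d_k\}$ is always square-summable by the Bessel property of the canonical dual.
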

Let  $\{f_{k}\}_{k \in \mathbf{I}}$ be a frame for $\mathcal{H}$ with frame operator $\Theta$.  Then, the family  $\{\Theta^{-1}f_{k}\}_{k \in \mathbf{I}}$ is a frame for $\mathcal{H}$. It is called the \textbf{canonical dual} of $\{f_{k}\}_{k \in \mathbf{I}}$. Instinctive search for the freedom of various choices of coefficients instead of the conventional frame coefficients $ \{ \langle f,S^{-1}f_k \rangle \}_{k \in \mathbf{I}}$, led to the theory of dual frames, which is quite important in applied mathematics.
Let $\{f_{k}\}_{k \in \mathbf{I}}$ be a frame for $\mathcal{H}$. Then, the sequence $\{g_{k}\}_{k \in \mathbf{I}}$ is called a \textbf{dual frame} of $\{f_{k}\}_{k \in \mathbf{I}}$ if $\{g_{k}\}_{k \in \mathbf{I}}$ is a frame for $\mathcal{H}$ and each $f \in \mathcal{H}$ can be expressed as
	\begin{align}
		f = \sum_{k \in \mathbf{I}}\langle f,g_{k} \rangle f_{k}.
		\label{eq:2.2}
	\end{align}
 That is, every frame $\{f_{k}\}_{k \in \mathbf{I}}$ admits at least one dual frame, namely  $\{\Theta^{-1}f_{k}\}_{k \in \mathbf{I}}$, the canonical dual frame.  A dual
	frame of $\{f_{k}\}_{k \in \mathbf{I}}$ other than $\{\Theta^{-1}f_{k}\}_{k \in \mathbf{I}}$ is known as \textbf{alternative dual}. For fundamental properties of dual frames in separable Hilbert spaces, we refer to \cite{C.Heil}.

	\subsection{Non-uniform frames}	
We use the symbol $\Lambda$ to denote the set $\Lambda:=\left\{0,\frac{r}{N}\right\} + 2\Z$, where $N \in \N$, $r$ be a fixed odd integer coprime with $N$ such that $1 \leq r \leq 2N-1$. Note that  $\Lambda$, in general, not  necessarily a group but is a spectrum associated with a one-dimensional spectral pair.  Gabardo and Nashed \cite{NUMRA} defined the notion of  spectral pair in their study of non-uniform wavelets in the space $L^2(\R)$.
	\begin{definition}\cite{NUMRA}
		Let $\Omega \subset \R$ be measurable and $\Lambda \subset \R$ a countable subset. If the
		collection $\{|\Omega|^{-\frac{1}{2}}e^{2\pi i\lambda.}\chi_{\Omega}(.)\}_{\lambda \in \Lambda}$ forms complete orthonormal system for $L^2(\Omega)$, where
		$\chi_{\Omega}$ is indicator function on $\Omega$ and $|\Omega|$ is Lebesgue measure of $\Omega$, then the pair $(\Omega,\Lambda)$ is a spectral pair.
	\end{definition}
	
	\begin{example}\cite{NUMRA}
		Let  $N \in \N$, $r$ be a fixed odd integer coprime with $N$ such that \break $1 \leq r \leq 2N-1$, and let $\Lambda=\left\{0,\frac{r}{N}\right\} + 2\Z$, and $\Omega=[0, \frac{1}{2})  \cup  \left[\frac{N}{2}, \frac{N+1}{2}\right)$. Then, $(\Omega,\Lambda)$ is a spectral pair.
		\end{example}
\begin{definition}
A frame of the form  $\{f_{k}\}_{k \in \Lambda} \subset \mathcal{H}$  for $\mathcal{H}$ is called a \textbf{non-uniform frame}  for $\mathcal{H}$.
\end{definition}
For non-uniform frames with discrete Gabor and wavelet structure, we refer to  \cite{HK20, HK21, DVNGF} and references therein.

	\section{Non-uniform discrete dynamical system}\label{Section 3}
We begin this section with the definition of non-uniform discrete dynamical system. Let $N \in \N$ and $r$ be a fixed odd integer co-prime with $N$ such that $1 \leq r \leq 2N-1$. We recall a notation $\Lambda:=\left\{0,\frac{r}{N}\right\} + 2\Z$.  Until and unless specified, symbol $[2K]$  denotes the set
\begin{align*}
[2K]:=\Big\{-2K,-2K+\frac{r}{N},\ldots,-2, -2+\frac{r}{N},0,\frac{r}{N},2,\ldots,2K-2,2K-2+\frac{r}{N}\Big\}, K \in \N;
 \end{align*}
 and $|[2K]|$ denotes the cardinality of the  set $[2K]$. The set $[2K]$ is only used to represent finite number of iterations.
	
\begin{definition}
Let $A$ be a bounded linear operator on  $\mathcal{H}$,  $W$ be a closed subspace of $\mathcal{H}$ and  $w \in W$  is the source term or forcing term.	A system of the form
	\begin{align}
		x_{\lambda+\frac{r}{N}} = &Ax_{\lambda} +w, \,  \lambda \in 2\Z; \notag\\
		x_{\lambda+2-\frac{r}{N}} =& Ax_{\lambda} +w, \, \lambda \in 2\Z^{+}+\frac{r}{N} \cup \left\{\frac{r}{N}\right\}; \notag\\
		x_{\lambda-2-\frac{r}{N}} = & Ax_{\lambda} +w, \, \lambda \in 2\Z^{-}+\frac{r}{N},
		\label{Dynamical System}
	\end{align}
is called a \textbf{non-uniform discrete dynamical system}.	Vectors $x_{\lambda} \in \mathcal{H}$,  $\lambda \in \Lambda$,  is the \textbf{$\lambda$-th state} of the non-uniform discrete dynamical system in \eqref{Dynamical System}. The terms $x_{0}$ and  $x_{-2}$ are called \textbf{initial states}.
\end{definition}

	Time-space sample measurements
	\begin{align}\label{eq:1.2}
		D(x_{0},x_{-2},w)=& \left[\langle x_{\lambda},g_{\lambda'}\rangle\right]_{\lambda,\,      \lambda' \in \Lambda},	
	\end{align}
	where $\Lambda=\left\{0,\frac{r}{N}\right\} + 2\Z,N \geq 1$ is an integer, and $r$ be a fixed odd integer coprime with $N$ such that $1 \leq r \leq 2N-1$, are obtained via inner products $\langle x_{\lambda},g_{\lambda'}\rangle$ with the vectors of a Bessel system $\{g_j\}_{j \in \Lambda} \subset \mathcal{H}$, referred to as the set of \textbf{spatial sampling vectors}. These measurements are organized in the matrix $D(x_{0},x_{-2},w)$, which is known as the \textbf{data matrix}. This matrix is also referred to as the data of the system, or alternatively as the set of time-space samples, measurements, or observations.

% Motivated by the work of Aldroubi et al., in Sec. \ref{Sec.IV} and Sec. \ref{Sec.V},  we find  necessary and sufficient conditions for recovering $w$ from the data in \eqref{eq:1.2} in a stable manner, independent of unknown initial states $x_{0},x_{-2}$, and for any $w \in W$.
	
	%	\subsection{The measurement spaces for non-uniform discrete dynamical system}
	To define the notion of stable reconstruction, we need to specify the measurement spaces, where the data resides, along with an appropriate norm. This framework enables us to represent the reconstruction operator
	$\mathscr{R}$ as a bounded linear mapping from the data space $\mathcal{B}$ to the Hilbert space $\mathcal{H}$. The following spaces will be used in the sequel:
		\begin{itemize}
			\item The sequence space $\ell^2(\Lambda):=\Big\{x=\{x_{\lambda}\}_{\lambda \in \Lambda} \subset \C : \sum_{\lambda \in \lambda}{\abs{x_{\lambda}}}^{2} < \infty\Big\}$ is a Hilbert space with respect to the inner product defined by
$\langle x,y\rangle = \sum_{\lambda \in \Lambda}x_{\lambda}\bar{y_{\lambda}},\,\, x = \{x_{\lambda}\}_{\lambda \in \Lambda}, \, y=\{y_{\lambda}\}_{\lambda \in \Lambda} \in \ell^2(\Lambda)$.
	
			\item The sequence space $\ell^\infty(\Lambda):=\Big\{x=\{x_{\lambda}\}_{\lambda \in \Lambda} \subset \C : \sup_{\lambda \in \lambda}\abs{x_{\lambda}} < \infty\Big\}$ is a Banach space endowed with the norm ${\norm{x}}_{\ell^\infty(\Lambda)}=\sup\limits_{\lambda \in \Lambda}\abs{x_{\lambda}}$.
			\item We write  $\C^{[2K]}:= \Big\{x={(x_{\lambda})}_{\lambda \in [2K]} : x_{\lambda} \in \C\Big\}$.
		\end{itemize}
		
		Now, we familiarize the spaces $\mathcal{B}(\ell^2(\Lambda),\C^{[2K]})$, $\mathcal{B}(\ell^2(\Lambda),\ell^\infty(\Lambda))$, and $\mathcal{B}^{s}(\ell^2(\Lambda),\ell^\infty(\Lambda))$ which are vital for our work.
		
			\begin{definition}
    The space $\mathcal{B}(\ell^2(\Lambda),\C^{[2K]})$ is the set of all infinite matrices $T=[a_{ij}]_{i \in [2K],j \in \Lambda}$ such that each row
   $r_{i}$ of $T$ belongs to $\ell^2(\Lambda)$. We endow $\mathcal{B}(\ell^2(\Lambda),\C^{[2K]})$ with the norm
   $ \norm{T}_{\ell^2(\Lambda) \rightarrow \C^{[2K]}}=\sum_{i \in [2K]}{\norm{r_{i}}}_{\ell^2(\Lambda)}$.
			\end{definition}
			The space $\mathcal{B}(\ell^2(\Lambda),\C^{[2K]})$ is a Banach space which is tantamount to the space of bounded linear operators from $\ell^2(\Lambda)$  to $\C^{[2K]}$, endowed with the operator norm.
		
		\begin{definition}
			The space $\mathcal{B}(\ell^2(\Lambda),\ell^\infty(\Lambda))$ is the set of all doubly infinite matrices $T=[a_{ij}]_{i,j \in \Lambda}$ such that each row $r_{i}$ of $T$ belongs to $\ell^2(\Lambda)$, and $\sup_{i \in \Lambda}\norm{r_{i}}_{\ell^2(\Lambda)}$ is finite.	We endow $\mathcal{B}(\ell^2(\Lambda),\ell^\infty(\Lambda))$ with the norm $\norm{T}_{\ell^2(\Lambda) \rightarrow \ell^{\infty} (\Lambda)}=\sup_{i \in \Lambda}{\norm{r_{i}}}_{\ell^2(\Lambda)}$.
		\end{definition}

		The space $\mathcal{B}(\ell^2(\Lambda),\ell^\infty(\Lambda))$ is a Banach space which is tantamount to the space of bounded linear operators from $\ell^2(\Lambda)$  to $\ell^\infty(\Lambda)$, endowed with the operator norm. Now, we are ready to define the subspace $\mathcal{B}^{s}(\ell^2(\Lambda),\ell^\infty(\Lambda))$.

	\begin{definition}
		
	The space $\mathcal{B}^{s}(\ell^2(\Lambda),\ell^\infty(\Lambda))$ is the set of matrices $\{T=[a_{ij}]:i,j \in \Lambda\} \subset \mathcal{B}(\ell^2(\Lambda),\ell^\infty(\Lambda)) $ such that there exists a $z \in \ell^2(\Lambda)$ satisfying
	$\lim_{\abs{i}\rightarrow\infty} {\norm{r_{i}-z}}_{\ell^2(\Lambda)}=0$. We endow $\mathcal{B}^{s}(\ell^2(\Lambda),\ell^\infty(\Lambda))$ with the norm induced by $\mathcal{B}(\ell^2(\Lambda),\ell^\infty(\Lambda))$.
	\end{definition}
	We ruminate for the following  two cases of non-uniform discrete dynamical system:
	\begin{enumerate}
		\renewcommand{\labelenumi}{\roman{enumi}.}
		\item In the first case, the data matrix 	 $D(x_{0},x_{-2},w)= \left[\langle x_{\lambda},g_{\lambda'}\rangle\right]_{\lambda,\, \lambda' \in \Lambda} $
		is obtained from
		finitely many iterations $|[2K]|$.
		\item In the second case, the data matrix 	 $D(x_{0},x_{-2},w)= \left[\langle x_{\lambda},g_{\lambda'}\rangle\right]_{\lambda,\, \lambda' \in \Lambda} $
		is obtained from
		infinitely many time iterations.
	\end{enumerate}
	
	For the first case, all data measurements are carried out in the space $\mathcal{B}(\ell^2(\Lambda),\C^{[2K]})$. In the second case of infinitely many time iterations, we  utilize the space $\mathcal{B}^{s}(\ell^2(\Lambda),\ell^\infty(\Lambda))$ which is a closed subspace of $\mathcal{B}(\ell^2(\Lambda),\ell^\infty(\Lambda))$.
	
	\subsection{Generalized non-uniform discrete dynamical system}
	In this subsection, we deal with the non-uniform discrete  dynamical systems that are the generalized version of \eqref{Dynamical System}. In this general setting, we assume that the states $x_{\lambda},\lambda \in \Lambda$ are obtained via the recursive relation
	\begin{align}
			x_{\lambda}=&
			\begin{cases}
			\mathscr{F}_{\lambda}(x_{0},x_{\frac{r}{N}},x_{2},x_{2+\frac{r}{N}} \cdots,x_{\lambda-2+\frac{r}{N}},w),\,\lambda \in  2\Z^{+};&\\
				\mathscr{F}_{\lambda}(x_{0},x_{\frac{r}{N}},x_{2},x_{2+\frac{r}{N}} \cdots,x_{\lambda-\frac{r}{N}},w),\,\lambda \in  2\Z^{+}+\frac{r}{N} \cup \left\{\frac{r}{N}\right\};&\\
				\mathscr{F}_{\lambda}(x_{-2},x_{-2+\frac{r}{N}},x_{-4},x_{-4+\frac{r}{N}} \cdots,x_{\lambda+2+\frac{r}{N}},w),\,\lambda \in  2\Z^{-};&\\
				\mathscr{F}_{\lambda}(x_{-2},x_{-2+\frac{r}{N}},x_{-4},x_{-4+\frac{r}{N}} \cdots,x_{\lambda-\frac{r}{N}},w),\,\lambda \in  2\Z^{-}+\frac{r}{N},
		\end{cases}
		\label{Generalized Dynamical System}
		\end{align}
		with $w$ belongings to the closed subspace $W$ of $\mathcal{H}$. In particular, $\mathscr{F}_{\lambda}$ can be a non-linear functional of its arguments.

		To present certain results in the context of setting \eqref{Generalized Dynamical System}, we assume that the system satisfies the following properties:
		\begin{enumerate}
			\renewcommand{\labelenumi}{\roman{enumi}.}
  			\item For each $w \in W$, there is a corresponding unique pair of stationary states. More explicitly, given any $w \in W$, there is a pair of initial states $(x_{0}(w),x_{-2}(w))$ such that
  			$$x_{\lambda}=\frac{(x_{0}+x_{-2})(w)}{2} \,\, \text{for all}\, \lambda \in \Lambda.$$
  			\item The correspondence between $w$ and its unique pair of stationary states  $(x_{0}(w),x_{-2}(w))$  is bounded. That is, the mapping $S:W \longrightarrow \mathcal{H}$ defined by
  			$$S(w)=\frac{(x_{0}+x_{-2})(w)}{2}$$
  			is a bounded linear operator, and $S$ is called as the stationary mapping operator.
  			\item For any source term $w \in W $ and any arbitrary initial states $x_{0},x_{-2} \in \mathcal{H}$, we have
  			$$ \lim_{\abs{\lambda}\rightarrow\infty} x_{\lambda}=S(w),$$
  			where the above limit is in $\norm{.}_{\mathcal{H}}$.
  		
		\end{enumerate}
		
		\begin{definition}\label{Def:2.3}
			A non-uniform discrete dynamical system \eqref{Generalized Dynamical System} satisfying the above properties $(i)-(iii)$ is denoted by the quadruple $(\mathcal{H},W,\mathscr{F},S)$.
		\end{definition}
		
		\subsection{Stable recovery for non-uniform discrete dynamical system}
		In this subsection, we define the notion of stable recovery for non-uniform discrete dynamical system and presents some important properties of the space $\mathcal{B}^{s}(\ell^2(\Lambda),\ell^\infty(\Lambda))$.
		
		 We start with the non-uniform discrete dynamical system of the form \eqref{Dynamical System} or of the form
		\eqref{Generalized Dynamical System}, with arbitrary initial states $x_{0}, x_{-2} \in \mathcal{H}$ and measurements $D(x_{0},x_{-2},w)$
		given by sampling through a Bessel sequence $\{g_j\}_{j \in \Lambda}$ in $\mathcal{H}$ as in \eqref{eq:1.2}, then
		\begin{enumerate}
			\renewcommand{\labelenumi}{\roman{enumi}.}
			\item the source term $w \in W \subseteq \mathcal{H}$ is said to be stably recovered from the data matrix $D(x_{0},x_{-2},w)$ in finitely many time iterations if there exists a bounded linear operator $\mathscr{R}:\mathcal{B}(\ell^2(\Lambda),\C^{[2K]}) \longrightarrow \mathcal{H}$ such that
			%\begin{align*}
					$\mathscr{R}(D(x_{0},x_{-2},w))=w \; \text{for all} \; x_{0},x_{-2} \in \mathcal{H}, w \in W $.
		%	\end{align*}

			\item the source term $w \in W \subseteq \mathcal{H}$ is said to be stably recovered from the data matrix $D(x_{0},x_{-2},w)$ in infinitely many time iterations if there exists a bounded
			linear operator 	$\mathscr{R}:\mathcal{B}^{s}(\ell^2(\Lambda),\ell^\infty(\Lambda)) \longrightarrow \mathcal{H}$
			such that
		%	\begin{align*}
			$\mathscr{R}(D(x_{0},x_{-2},w))=w \; \text{for all}\; x_{0}, x_{-2} \in \mathcal{H}, w \in W $.
		%	\end{align*}
			\end{enumerate}
	
	The following Lemma is an adaptation of \cite[Lemma 4.1]{CFSRDS}.
   		\begin{lemma}\label{L3.4}
		Let $T:\ell^2(\Lambda) \rightarrow \ell^\infty(\Lambda)$ be a bounded linear operator. Let $[a_{ij}]$ be the matrix representation of $T$ with respect to the canonical basis $(e_{j})_{j \in \Lambda}$, i.e.,
        $ Te_{j}=\sum_{i \in \Lambda}a_{ij}e_{j}, \, j \in \Lambda$.
		Then,
        \begin{align*}
        \norm{T}_{\ell^2(\Lambda) \rightarrow \ell^{\infty} (\Lambda)}=\sup_{i \in \Lambda}{\Big(\sum_{j \in \Lambda}{\abs{a_{ij}}}^{2}\Big)}^{\frac{1}{2}}.
 \end{align*}
	\end{lemma}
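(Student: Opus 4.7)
The plan is to prove both inequalities by isolating the action of $T$ on each coordinate. Observe that for each fixed $i \in \Lambda$, the map $x \mapsto (Tx)_{i}$ is a bounded linear functional on $\ell^{2}(\Lambda)$ (since composition with the $i$-th coordinate projection $\phi_{i}:\ell^{\infty}(\Lambda)\to \mathbb{C}$, which has norm $1$, yields a bounded functional of norm at most $\|T\|$). By the Riesz representation theorem on $\ell^{2}(\Lambda)$, this functional is precisely inner product against the conjugate of the row $r_{i}=\{a_{ij}\}_{j\in\Lambda}$, and its functional norm equals $\|r_{i}\|_{\ell^{2}(\Lambda)}$. The identity we must prove is then simply the statement that the operator norm of $T$ equals the supremum of these individual coordinate-functional norms.

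For the upper bound, I would apply Cauchy--Schwarz row-by-row: for every $x\in\ell^{2}(\Lambda)$,
\begin{align*}
|(Tx)_{i}|=\Big|\sum_{j\in\Lambda}a_{ij}x_{j}\Big|\le\|r_{i}\|_{\ell^{2}(\Lambda)}\,\|x\|_{\ell^{2}(\Lambda)},
\end{align*}
and taking the supremum over $i\in\Lambda$ gives $\|Tx\|_{\ell^{\infty}(\Lambda)}\le \bigl(\sup_{i}\|r_{i}\|_{\ell^{2}(\Lambda)}\bigr)\|x\|_{\ell^{2}(\Lambda)}$, hence $\|T\|_{\ell^{2}(\Lambda)\to\ell^{\infty}(\Lambda)} \le \sup_{i}\|r_{i}\|_{\ell^{2}(\Lambda)}$.

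For the reverse bound, I would test $T$ against the (conjugate of the) row itself, but since I do not yet know a priori that $r_{i}\in\ell^{2}(\Lambda)$, I would use finite truncations. Fix $i\in\Lambda$ and a finite subset $F\subset\Lambda$, and define $x^{(F)}\in\ell^{2}(\Lambda)$ by $x^{(F)}_{j}=\overline{a_{ij}}$ for $j\in F$ and $0$ otherwise. Then $\|x^{(F)}\|_{\ell^{2}(\Lambda)}^{2}=\sum_{j\in F}|a_{ij}|^{2}$ and $(Tx^{(F)})_{i}=\sum_{j\in F}|a_{ij}|^{2}$, so
\begin{align*}
\Big(\sum_{j\in F}|a_{ij}|^{2}\Big)^{1/2}\le \|Tx^{(F)}\|_{\ell^{\infty}(\Lambda)}/\|x^{(F)}\|_{\ell^{2}(\Lambda)}\le \|T\|_{\ell^{2}(\Lambda)\to\ell^{\infty}(\Lambda)}.
\end{align*}
Letting $F$ exhaust $\Lambda$ via monotone convergence gives $\|r_{i}\|_{\ell^{2}(\Lambda)}\le\|T\|_{\ell^{2}(\Lambda)\to\ell^{\infty}(\Lambda)}$ (in particular $r_{i}\in\ell^{2}(\Lambda)$, which retroactively justifies the use of $r_{i}$ in the matrix representation). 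Taking supremum over $i\in\Lambda$ completes the reverse inequality, and combining the two yields the claimed identity.

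There is no significant obstacle here; the only delicate point is the a priori square-summability of the rows, which is handled cleanly by the truncation step and is anyway a consequence of the lower-bound argument itself. The proof is essentially the standard computation of the $\ell^{2}\!\to\!\ell^{\infty}$ operator norm, transcribed to the index set $\Lambda$ arising from the spectral pair.
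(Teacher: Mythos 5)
Your proof is correct and complete: the Cauchy--Schwarz estimate row-by-row gives the upper bound, and testing against finite truncations of the conjugated rows gives the lower bound while simultaneously establishing that each row lies in $\ell^2(\Lambda)$. The paper itself states this lemma without proof, deferring to \cite[Lemma 4.1]{CFSRDS}, and your argument is exactly the standard computation one would expect there, so there is no divergence to report; you have also implicitly corrected the typo in the statement, where $Te_{j}=\sum_{i \in \Lambda}a_{ij}e_{j}$ should read $Te_{j}=\sum_{i \in \Lambda}a_{ij}e_{i}$.
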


		\begin{remark}\label{R3.5}
		If 	$\{g_j\}_{j \in \Lambda} \subset \mathcal{H} $ is a Bessel sequence, then $\mathcal{B}^{s}(\ell^2(\Lambda),\ell^\infty(\Lambda))$ (where $s$ stands for strong)
		consists of all operators $T \in \mathcal{B}(\ell^2(\Lambda),\ell^\infty(\Lambda))$ such that the limit
		\begin{align}
			\lim_{\abs{i}\rightarrow\infty} \sum_{j \in \Lambda}a_{ij}g_{j}
			\label{eq:3.4}
		\end{align}
		exists in the norm of $\mathcal{H}$. Since all the separable Hilbert spaces are unitarily
		equivalent, so the definition does not depend on $\mathcal{H}$. Moreover, we write $\lim{T\{g_{j}\}_{j \in \Lambda}}$ for the limit in \eqref{eq:3.4}.
	\end{remark}
	
	The next result is an adaptation of \cite[Lemma 4.4]{CFSRDS}. For the sake of completeness, we provide its proof.
	\begin{lemma}\label{L3.6}
		Let $T=[a_{ij}] \in \mathcal{B}(\ell^2(\Lambda),\ell^\infty(\Lambda))$. Then $T \in \mathcal{B}^{s}(\ell^2(\Lambda),\ell^\infty(\Lambda))$ if and only if the rows of $T$ are norm convergent in $\ell^2(\Lambda),i.e.,$
		there is a vector $z \in \ell^2(\Lambda) $ such that
		\begin{align*}
			 \lim_{\abs{i}\rightarrow\infty} \norm{r_{i}-z}_{\ell^2(\Lambda)}=0,
		\end{align*}
			where $ r_{i}=(\ldots,a_{i-2},a_{i-2+\frac{r}{N}},a_{i0},a_{i\frac{r}{N}},\ldots) $ denotes the $i$-th  row of $T$.
	\end{lemma}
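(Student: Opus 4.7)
My plan is to prove the equivalence by unpacking both sides against Definition 3.3 of $\mathcal{B}^s(\ell^2(\Lambda),\ell^\infty(\Lambda))$, and then to reconcile the resulting characterization with the operational description given in Remark \ref{R3.5} (convergence of $\sum_j a_{ij} g_j$ in the norm of $\mathcal{H}$ for a Bessel sequence $\{g_j\}_{j \in \Lambda}$). In the present formulation, both sides of the asserted equivalence say the same thing, so the core of the proof is a careful unwinding of the definition; the remaining substance is showing that the two equivalent descriptions of $\mathcal{B}^{s}$ used in the paper are indeed the same object, independent of the ambient separable Hilbert space.

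First I would handle the direction $T \in \mathcal{B}^{s}(\ell^2(\Lambda),\ell^\infty(\Lambda)) \Rightarrow$ rows convergent in $\ell^2(\Lambda)$. By Definition 3.3 there exists $z \in \ell^2(\Lambda)$ with $\|r_i - z\|_{\ell^2(\Lambda)} \to 0$ as $|i| \to \infty$, which is precisely the stated row convergence. For the converse direction, suppose the rows $r_i$ converge in $\ell^2(\Lambda)$-norm to some limit $z$. Since $\ell^2(\Lambda)$ is complete, $z \in \ell^2(\Lambda)$, and since $T$ is already assumed to lie in $\mathcal{B}(\ell^2(\Lambda),\ell^\infty(\Lambda))$, Lemma \ref{L3.4} ensures $\sup_{i \in \Lambda} \|r_i\|_{\ell^2(\Lambda)} < \infty$. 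Combining these observations with Definition 3.3 places $T$ in $\mathcal{B}^{s}(\ell^2(\Lambda),\ell^\infty(\Lambda))$.

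To align the statement with Remark \ref{R3.5}, I would verify the two-way link between $\ell^2$-row convergence and $\mathcal{H}$-norm convergence of $\sum_{j \in \Lambda} a_{ij} g_j$ for a Bessel sequence $\{g_j\}_{j \in \Lambda}$ with bound $B$. For the easy direction, the standard Bessel estimate
\begin{align*}
\Big\| \sum_{j \in \Lambda} a_{ij} g_j - \sum_{j \in \Lambda} z_j g_j \Big\|_{\mathcal{H}}^{2} \leq B\, \|r_i - z\|_{\ell^2(\Lambda)}^{2}
\end{align*}
immediately transfers $\ell^2$-row convergence to $\mathcal{H}$-norm convergence of the images. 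For the reverse direction, I would exploit the fact that every separable Hilbert space is unitarily equivalent to $\ell^2(\Lambda)$, as noted in Remark \ref{R3.5}; identifying $\mathcal{H}$ with $\ell^2(\Lambda)$ and taking $\{g_j\}_{j \in \Lambda}$ to be the canonical orthonormal basis $\{e_j\}_{j \in \Lambda}$, the sum $\sum_{j \in \Lambda} a_{ij} g_j$ is literally the row $r_i$ viewed as an element of $\ell^2(\Lambda)$, so $\mathcal{H}$-norm convergence coincides with $\ell^2$-norm convergence of the rows.

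The main obstacle, modest as it is, will be keeping the identifications transparent and confirming that the purported limit vector $z$ genuinely belongs to $\ell^2(\Lambda)$ rather than to a larger sequence space; this is where the completeness of $\ell^2(\Lambda)$ and the canonical-basis reduction play the pivotal role. Once these bookkeeping points are settled, the lemma follows cleanly and provides the bridge needed to use either the row-convergence criterion or the Bessel-sequence criterion interchangeably in the subsequent analysis of stable recovery in infinitely many iterations.
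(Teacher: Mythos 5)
Your proposal is correct and follows essentially the same route as the paper's proof: the forward direction specializes the Bessel sequence to an orthonormal basis so that $\sum_j a_{ij}g_j$ is unitarily identified with the row $r_i$, and the converse uses the Bessel bound estimate $\big\|\sum_j (a_{ij}-z_j)g_j\big\|_{\mathcal{H}}^2 \leq \beta\,\|r_i - z\|_{\ell^2(\Lambda)}^2$ to transfer row convergence to convergence in $\mathcal{H}$. Your observation that the lemma is tautological relative to the literal Definition of $\mathcal{B}^{s}$, and that the real content is reconciling it with the Remark's description via synthesis with a Bessel sequence, is accurate (and your inequality $\leq$ is the correct form of the estimate, which the paper writes as an equality).
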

	
	\begin{proof}
 First, suppose that $T \in \mathcal{B}^{s}(\ell^2(\Lambda),\ell^\infty(\Lambda))$.
		Then, by Remark \ref{R3.5},
		$T \in \mathcal{B}(\ell^2(\Lambda),\ell^\infty(\Lambda))$ such that the limit
		$$ \lim_{\abs{i}\rightarrow\infty} \sum_{j \in \Lambda}a_{ij}g_{j}$$
		exists in the norm of $\mathcal{H}$ for any Bessel sequence
		$\{g_j\}_{j \in \Lambda} \subset \mathcal{H} $
		and since all the orthonormal bases are Bessel sequences, so if we take  the  Bessel sequence which is an orthonormal basis in $\mathcal{H}$, then the series in \eqref{eq:3.4} is unitarily equivalent to the vector
		\begin{align*}
			z =(\ldots,z_{-4},z_{-4+\frac{r}{N}},z_{-2},z_{-2+\frac{r}{N}},z_{0},z_{\frac{r}{N}},z_{2},z_{2+\frac{r}{N}},\ldots) \in \ell^2(\Lambda).
		\end{align*}
		Thus, the existence of limit is equivalent to the rows are convergent in $\ell^2(\Lambda)$.
		
		For the converse part, let us assume that $T \in \mathcal{B}(\ell^2(\Lambda),\ell^\infty(\Lambda))$ and its rows are convergent in $\ell^2(\Lambda)$ to a vector
		$z =(\ldots,z_{-4},z_{-4+\frac{r}{N}},z_{-2},z_{-2+\frac{r}{N}},z_{0},z_{\frac{r}{N}},z_{2},z_{2+\frac{r}{N}},\ldots) \in \ell^2(\Lambda) $, and fix an arbitrary Bessel sequence $\{g_j\}_{j \in \Lambda} \subset \mathcal{H} $ with Bessel bound  $\beta>0$.
Then,
\begin{align*}
	h:=\sum_{j \in \Lambda}z_{j}g_{j}
\end{align*}
is a well-defined element of $\mathcal{H}$.
		Moreover,
		\begin{align*}
			\Big\|h-\sum_{j \in \Lambda}a_{ij}g_{j}\Big\|_{\mathcal{H}}=\Big\|\sum_{j \in \Lambda}z_{j}g_{j}-\sum_{j \in \Lambda}a_{ij}g_{j}\big\|_{\mathcal{H}}
			=\Big\|\sum_{j \in \Lambda}(z_{j}-a_{ij})g_{j}\Big\|_{\mathcal{H}}
			= \sqrt{\beta}\norm{z-r_{i}}_{\ell^2(\Lambda)}.
		\end{align*}
		By assumption,
			$	\norm{z-r_{i}}_{\ell^2(\Lambda)} \rightarrow 0 \;  \text{as} \; \abs{i}\rightarrow\infty $.
    Hence,
		$\lim_{\abs{i}\rightarrow\infty} \sum_{j \in \Lambda}a_{ij}g_{j} = h $.
	This implies that
		$T \in \mathcal{B}^{s}(\ell^2(\Lambda),\ell^\infty(\Lambda))$.
	\end{proof}
	
	The Lemma \ref{L3.6} gives the following corollary:
	\begin{corollary}\label{C3.7}
		$\mathcal{B}^{s}(\ell^2(\Lambda),\ell^\infty(\Lambda))$ is a closed subspace of $\mathcal{B}(\ell^2(\Lambda),\ell^\infty(\Lambda))$.
	\end{corollary}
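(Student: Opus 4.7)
The plan is to leverage the characterization furnished by Lemma \ref{L3.6}: membership in $\mathcal{B}^{s}(\ell^2(\Lambda),\ell^\infty(\Lambda))$ is equivalent to the rows $r_{i}$ of the matrix converging to a limit $z \in \ell^2(\Lambda)$ as $|i|\to\infty$. So I would show closedness by taking a sequence $(T_{n})\subset \mathcal{B}^{s}(\ell^2(\Lambda),\ell^\infty(\Lambda))$ converging in operator norm to some $T \in \mathcal{B}(\ell^2(\Lambda),\ell^\infty(\Lambda))$, and producing a single $z\in \ell^2(\Lambda)$ that is the row-limit of $T$. (Linearity of $\mathcal{B}^{s}$ is immediate from the triangle inequality, so the subspace part is free.)

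The first step is to identify the candidate limit $z$. For each $n$, write $r_{i}^{(n)}$ for the $i$-th row of $T_{n}$ and let $z_{n}\in \ell^2(\Lambda)$ be such that $\|r_{i}^{(n)}-z_{n}\|_{\ell^2(\Lambda)}\to 0$. Using the definition of the norm on $\mathcal{B}(\ell^2(\Lambda),\ell^\infty(\Lambda))$, observe that for any $n,m$,
\begin{align*}
\|z_{n}-z_{m}\|_{\ell^2(\Lambda)} \;=\; \lim_{|i|\to\infty}\|r_{i}^{(n)}-r_{i}^{(m)}\|_{\ell^2(\Lambda)} \;\le\; \sup_{i\in\Lambda}\|r_{i}^{(n)}-r_{i}^{(m)}\|_{\ell^2(\Lambda)} \;=\; \|T_{n}-T_{m}\|_{\ell^2(\Lambda)\to\ell^\infty(\Lambda)}.
\end{align*}
Since $(T_{n})$ is Cauchy, so is $(z_{n})$ in $\ell^2(\Lambda)$; completeness yields a limit $z\in \ell^2(\Lambda)$.

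The second step is to verify that the rows $r_{i}$ of $T$ indeed converge to this $z$. Given $\varepsilon>0$, split
\begin{align*}
\|r_{i}-z\|_{\ell^2(\Lambda)} \;\le\; \|r_{i}-r_{i}^{(n)}\|_{\ell^2(\Lambda)} + \|r_{i}^{(n)}-z_{n}\|_{\ell^2(\Lambda)} + \|z_{n}-z\|_{\ell^2(\Lambda)}.
\end{align*}
Pick $n$ large enough so that $\|T-T_{n}\|_{\ell^2(\Lambda)\to\ell^\infty(\Lambda)}<\varepsilon/3$ (which dominates the first term uniformly in $i$) and $\|z_{n}-z\|_{\ell^2(\Lambda)}<\varepsilon/3$, then take $|i|$ large so that $\|r_{i}^{(n)}-z_{n}\|_{\ell^2(\Lambda)}<\varepsilon/3$. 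By Lemma \ref{L3.6}, this shows $T\in \mathcal{B}^{s}(\ell^2(\Lambda),\ell^\infty(\Lambda))$. I do not foresee a real obstacle; the only subtle point is the interchange $\|z_{n}-z_{m}\|_{\ell^2(\Lambda)}=\lim_{|i|\to\infty}\|r_{i}^{(n)}-r_{i}^{(m)}\|_{\ell^2(\Lambda)}$, which follows from the fact that the $\ell^2$-norm is continuous with respect to its own topology together with $r_{i}^{(n)}\to z_{n}$ and $r_{i}^{(m)}\to z_{m}$.
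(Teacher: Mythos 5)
Your proof is correct, and it is the standard argument the paper itself omits (the paper merely defers to \cite[Corollary 4.5]{CFSRDS}): extract the candidate row-limit $z$ as the $\ell^2(\Lambda)$-limit of the $z_n$ via the bound $\|z_n-z_m\|_{\ell^2(\Lambda)}\le\|T_n-T_m\|_{\ell^2(\Lambda)\to\ell^\infty(\Lambda)}$, then close with the three-term estimate. All the steps, including the norm-continuity justification for $\|z_n-z_m\|_{\ell^2(\Lambda)}=\lim_{|i|\to\infty}\|r_i^{(n)}-r_i^{(m)}\|_{\ell^2(\Lambda)}$ and the uniformity in $i$ of the first term, are sound.
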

	
	\begin{proof}
		
		The proof follows along the same lines as presented in \cite[Corollary 4.5]{CFSRDS}.
	\end{proof}

	Propelled by the work of \cite[Theorem 3.1]{CFSRDS}, the next result ratifies an important property of the space $\mathcal{B}^{s}(\ell^2(\Lambda),\ell^\infty(\Lambda))$, that it is a natural domain of the reconstruction operator $\mathscr{R}$ and the operator $\mathscr{R}:\mathcal{B}^{s}(\ell^2(\Lambda),\ell^\infty(\Lambda)) \longrightarrow \mathcal{H}$ is bounded.
	
	\begin{theorem}\label{Th3.8}
		Let $\{g_j\}_{j \in \Lambda} \subset \mathcal{H}$ be a Bessel sequence with optimal Bessel bound $\beta>0$.  Then, for each $T=[a_{ij}] \in \mathcal{B}^{s}(\ell^2(\Lambda),\ell^\infty(\Lambda))$, the limit
		\begin{align*}
				\lim{T\{g_{j}\}_{j \in \Lambda}}=\lim_{\abs{i} \rightarrow \infty}{[a_{ij}]_{i,\,j \in \Lambda}\{g_{j}\}_{j \in \Lambda}}:=\lim_{\abs{i}\rightarrow\infty} \sum_{j \in \Lambda}a_{ij}g_{j}
		\end{align*}
	exists in $\mathcal{H}$. Moreover, the mapping
		\begin{align*}
			\mathscr{R}:\mathcal{B}^{s}(\ell^2(\Lambda),\ell^\infty(\Lambda)) \longrightarrow \mathcal{H}\,\, \text{defined as}\,\,
			T\mapsto \lim{T\{g_{j}\}_{j \in \Lambda}}
		\end{align*}
		is a well defined bounded operator whose norm is precisely $\sqrt{\beta}$.
	\end{theorem}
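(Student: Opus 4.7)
The strategy is to extract existence of the limit directly from Lemma \ref{L3.6}, and then pin down the exact operator norm by establishing the upper and lower bounds separately. For $T=[a_{ij}] \in \mathcal{B}^s(\ell^2(\Lambda),\ell^\infty(\Lambda))$, Lemma \ref{L3.6} provides $z \in \ell^2(\Lambda)$ with $\|r_i - z\|_{\ell^2(\Lambda)} \to 0$ as $|i|\to\infty$. Since $\{g_j\}_{j \in \Lambda}$ is Bessel with bound $\beta$, the series $h:=\sum_{j \in \Lambda} z_j g_j$ converges in $\mathcal{H}$, and the estimate $\|h - \sum_{j} a_{ij} g_j\|_{\mathcal{H}} \leq \sqrt{\beta}\,\|z - r_i\|_{\ell^2(\Lambda)}$ (which is essentially computed inside the proof of Lemma \ref{L3.6}) shows that $\lim_{|i| \to \infty} \sum_{j} a_{ij} g_j = h$ in $\mathcal{H}$. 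This simultaneously proves the existence of the limit and identifies $\mathscr{R}(T) = h$. Linearity of $\mathscr{R}$ is then immediate, since the row-limit assignment $T \mapsto z$ is linear in $T$ and synthesis against $\{g_j\}_{j\in\Lambda}$ is linear.

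For the upper bound on $\|\mathscr{R}\|$, I would chain two inequalities. The Bessel inequality yields $\|h\|_{\mathcal{H}} \leq \sqrt{\beta}\,\|z\|_{\ell^2(\Lambda)}$. On the other hand, norm convergence of rows combined with the definition of the norm on $\mathcal{B}(\ell^2(\Lambda),\ell^\infty(\Lambda))$ gives $\|z\|_{\ell^2(\Lambda)} = \lim_{|i|\to\infty}\|r_i\|_{\ell^2(\Lambda)} \leq \sup_{i \in \Lambda}\|r_i\|_{\ell^2(\Lambda)} = \|T\|_{\ell^2(\Lambda)\to\ell^\infty(\Lambda)}$. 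Multiplying these produces $\|\mathscr{R}(T)\|_{\mathcal{H}} \leq \sqrt{\beta}\,\|T\|$, hence $\|\mathscr{R}\| \leq \sqrt{\beta}$.

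The subtler part is the lower bound $\|\mathscr{R}\| \geq \sqrt{\beta}$, which is where the hypothesis that $\beta$ is the \emph{optimal} Bessel bound enters. By definition of the optimal (smallest) Bessel bound, for every $\varepsilon \in (0,\beta)$ there exists $z^{\varepsilon} \in \ell^2(\Lambda)$ with $\|z^{\varepsilon}\|_{\ell^2(\Lambda)} = 1$ and $\|\sum_{j \in \Lambda} z^{\varepsilon}_j g_j\|_{\mathcal{H}}^2 > \beta - \varepsilon$. I would then form $T^{\varepsilon} \in \mathcal{B}^s(\ell^2(\Lambda),\ell^\infty(\Lambda))$ whose every row coincides with $z^{\varepsilon}$. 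The constant-row construction trivially satisfies the row-convergence condition of Lemma \ref{L3.6}, gives $\|T^{\varepsilon}\| = \|z^{\varepsilon}\|_{\ell^2(\Lambda)} = 1$, and yields $\mathscr{R}(T^{\varepsilon}) = \sum_{j} z^{\varepsilon}_j g_j$. Consequently $\|\mathscr{R}\| \geq \|\mathscr{R}(T^{\varepsilon})\|_{\mathcal{H}} > \sqrt{\beta - \varepsilon}$, and letting $\varepsilon \to 0^+$ closes the argument.

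The main obstacle is really only the lower bound: one needs to unpack the phrase \emph{optimal Bessel bound} and to exhibit an operator in $\mathcal{B}^s(\ell^2(\Lambda),\ell^\infty(\Lambda))$ that nearly realizes it. The constant-row trick sidesteps any worry about the row-convergence requirement and makes the extremizing operator transparent, so no further work beyond near-extremizers for the Bessel inequality is needed. The rest of the proof is bookkeeping anchored on Lemma \ref{L3.6}.
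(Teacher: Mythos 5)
Your proposal is correct and follows essentially the same route as the paper: the existence of the limit and the upper bound $\|\mathscr{R}\|\leq\sqrt{\beta}$ both come from the synthesis-operator estimate $\bigl\|\sum_{j}c_{j}g_{j}\bigr\|_{\mathcal{H}}\leq\sqrt{\beta}\,\norm{c}_{\ell^2(\Lambda)}$ applied to the convergent rows, and the lower bound is obtained in both cases by feeding constant-row matrices into $\mathscr{R}$ and invoking optimality of the Bessel bound. Your version merely makes two steps more explicit than the paper does — deriving existence directly from Lemma \ref{L3.6} and unpacking ``optimal'' via $\varepsilon$-near-extremizers — which is a welcome tightening but not a different argument.
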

	
	\begin{proof}
		Let  $ T \in \mathcal{B}^{s}(\ell^2(\Lambda),\ell^\infty(\Lambda))$ be arbitrary and let  $ r_{i}=(\ldots,a_{i-2},a_{i-2+\frac{r}{N}},a_{i0},a_{i\frac{r}{N}},\ldots) $ represents the $i$-th row of $T$.	Write
		$(T\{g_{j}\}_{j \in \Lambda})_{i}:=\sum_{j \in \Lambda}a_{ij}g_{j} ,\, i \in \Lambda$.
		Then, we have
	\begin{align*}	
		\norm{(T\{g_{j}\}_{j \in \Lambda})_{i}}_{\mathcal{H}}  =  \Big\|\sum_{j \in \Lambda}a_{ij}g_{j}\Big\|_{\mathcal{H}}
		\leq  \sqrt{\beta}\norm{r_{i}}_{\ell^2(\Lambda)}
		\leq \sqrt{\beta}\norm{T}_{\ell^2(\Lambda) \rightarrow \ell^{\infty} (\Lambda)},	\, i \in \Lambda.
		\end{align*}
		Letting $\abs{i}\rightarrow \infty$, we have
		\begin{align*}
			\norm{\mathscr{R}(T)}_{\mathcal{H}}=\norm{ \lim{T\{g_{j}\}_{j \in \Lambda}}}_{\mathcal{H}}\leq \sqrt{\beta}\norm{T}_{\ell^2(\Lambda) \rightarrow \ell^{\infty} (\Lambda)}.	
		\end{align*}
     This implies that the operator $\mathscr{R}$ is bounded and
    % \begin{align*}
     $	\norm{\mathscr{R}}_{\mathcal{B}^{s}(\ell^2(\Lambda),\ell^\infty(\Lambda))\rightarrow \mathcal{H} }\leq \sqrt{\beta} $.	
   %  \end{align*}

	Now, for the reverse inequality, let $T \in \mathcal{B}^{s}(\ell^2(\Lambda),\ell^\infty(\Lambda))$ be such that all its rows are equal to a fixed vector
	\begin{align*}
		z=(\ldots,z_{-4},z_{-4+\frac{r}{N}},z_{-2},z_{-2+\frac{r}{N}},z_{0},z_{\frac{r}{N}},z_{2},z_{2+\frac{r}{N}},\ldots) \in \ell^2(\Lambda).
	\end{align*}
	Then,
	$\norm{T}_{\ell^2(\Lambda) \rightarrow \ell^{\infty} (\Lambda)}=\norm{z}_{\ell^2(\Lambda)} \quad
	\text{and} \quad
		\mathscr{R}(T)=\lim{T\{g_{j}\}_{j \in \Lambda}}= \sum_{j \in \Lambda}a_{j}g_{j}$.
		Hence, the inequality
		\begin{align*}
		\norm{\mathscr{R}(T)}_{\mathcal{H}} \leq \norm{\mathscr{R}}_{\mathcal{B}^{s}(\ell^2(\Lambda),\ell^\infty(\Lambda))\rightarrow \mathcal{H}} \norm{T}_{\ell^2(\Lambda) \rightarrow \ell^{\infty} (\Lambda)}
		\end{align*}
        converts into
        \begin{align*}
        \Big\|\sum_{j \in \Lambda}a_{j}g_{j}\Big\|_{\mathcal{H}}  \leq \norm{\mathscr{R}}_{\mathcal{B}^{s}(\ell^2(\Lambda),\ell^\infty(\Lambda))\rightarrow \mathcal{H}} \norm{z}_{\ell^2(\Lambda)}, \,\, z \in \ell^2(\Lambda).
        \end{align*}
		 But, since $z \in \ell^2(\Lambda)$ is arbitrary and $\{g_{j}\}_{j \in \Lambda}$ is a Bessel sequence with optimal Bessel bound $\beta>0$.
		This implies,
	%	\begin{align*}
		$ \sqrt{\beta} \leq \norm{\mathscr{R}}_{\mathcal{B}^{s}(\ell^2(\Lambda),\ell^\infty(\Lambda))\rightarrow \mathcal{H}} $.
	%	\end{align*}
	This completes the proof.
		
		\end{proof}
		
	Next, we illustrate Theorem \ref{Th3.8} with the following example.
    \begin{example}
		Let $\mathcal{H}=\ell^2(\Lambda)$ and let $\{g_j\}_{j \in \Lambda}=\{e_{j}\}_{j \in \Lambda}$ be the standard orthonormal basis for $\ell^2(\Lambda)$ which forms a Bessel sequence with optimal Bessel bound $1$.
		Then, for each $T=[a_{ij}] \in \mathcal{B}^{s}(\ell^2(\Lambda),\ell^\infty(\Lambda))$, the limit
		\begin{align*}
				\lim{T\{g_{j}\}_{j \in \Lambda}}=\lim_{\abs{i}\rightarrow\infty} \sum_{j \in \Lambda}a_{ij}e_{j}
			= \lim_{\abs{i}\rightarrow \infty}r_{i},
		\end{align*}
		which exist by the definition of $\mathcal{B}^{s}(\ell^2(\Lambda),\ell^\infty(\Lambda))$. Thus, the mapping
		\begin{align*}
			\mathscr{R}:\mathcal{B}^{s}(\ell^2(\Lambda),\ell^\infty(\Lambda)) \longrightarrow \mathcal{H}\,\, \text{defined as} \,\,
			T\mapsto \lim{T\{g_{j}\}_{j \in \Lambda}}
		\end{align*}
		is well defined. Now, we compute
		\begin{align*}
			\norm{\mathscr{R}(T)}_{\mathcal{H}}=&\norm{ 	\lim{T\{g_{j}\}_{j \in \Lambda}}}_{\mathcal{H}}\\
			=& \Big\|\lim_{\abs{i}\rightarrow \infty}r_{i}\Big\|_{\ell^2(\Lambda)}\\
			=& \lim_{\abs{i}\rightarrow \infty}\norm{r_{i}}_{\ell^2(\Lambda)}\\
			\leq & \sup_{i \in \Lambda}\norm{r_{i}}_{\ell^2(\Lambda)}\\
			=&\norm{T}_{\ell^2(\Lambda) \rightarrow \ell^{\infty}(\Lambda)}.
		\end{align*}
		This implies, $\norm{\mathscr{R}} \leq 1$.
		Now, for the reverse inequality, let $T \in \mathcal{B}^{s}(\ell^2(\Lambda),\ell^\infty(\Lambda))$ be such that all its rows are equal to a fixed vector
		\begin{align*}
			z =(\ldots,0,0,0,0,1,0,0,0,\ldots) \in \ell^2(\Lambda).
		\end{align*}
         Then,
		$ \norm{\mathscr{R}(T)}_{\mathcal{H}}=\sup\limits_{i \in \Lambda}\norm{r_{i}}_{\ell^2(\Lambda)}
			= 1
			= \norm{T}_{\ell^2(\Lambda) \rightarrow \ell^{\infty} (\Lambda)} $.
			This implies that $\norm{\mathscr{R}} = 1$.
		\end{example}
	
	\subsection{Finite Time Iterations}\label{Sec.IV}
	In this subsection, we deal with the case where the reconstruction of the source term is achieved using a finite number of time iterations of the non-uniform discrete dynamical system \eqref{Dynamical System}. In this case,  all data measurements are performed by using the space $\mathcal{B}(\ell^2(\Lambda),\C^{[2K]})$.
	In our initial results, the source $w$ can be any element of the space $\mathcal{H}$. While this scenario is less practical compared to the more restricted case of closed subspaces, since, in reality, sources are typically confined to specific spatial regions, it offers a mathematically elegant solution to the source recovery problem.
	
	We begin this section by proving the following lemma, which verifies that the data matrix for finite time iterations belongs to the space $\mathcal{B}(\ell^2(\Lambda),\C^{[2K]})$.
	
	\begin{lemma}\label{L3.10}
		For the non-uniform discrete dynamical system \eqref{Dynamical System} with any initial states $x_{0},x_{-2} \in  \mathcal{H}$, and $1 \leq |[2K]| < \infty $, the data matrix $D(x_{0},x_{-2},w)=[\langle x_{\lambda},g_{j}\rangle]_{\lambda \in [2K],\,j \in \Lambda}  \in \mathcal{B}(\ell^2(\Lambda),\C^{[2K]})$.
	\end{lemma}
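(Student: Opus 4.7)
The plan is to verify the two defining conditions of $\mathcal{B}(\ell^2(\Lambda),\C^{[2K]})$ for the matrix $D(x_{0},x_{-2},w)=[\langle x_{\lambda},g_{j}\rangle]_{\lambda \in [2K],\, j \in \Lambda}$: each row is in $\ell^2(\Lambda)$, and the norm $\sum_{\lambda \in [2K]}\|r_{\lambda}\|_{\ell^2(\Lambda)}$ is finite. Since $|[2K]|<\infty$, the second condition will be automatic once the first is established, so the whole argument reduces to a rowwise Bessel estimate.

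First I would observe that every state $x_{\lambda}$ with $\lambda\in[2K]$ is a well-defined element of $\mathcal{H}$ with finite norm. Indeed, the recursive relations in \eqref{Dynamical System} express $x_{\lambda}$ as finitely many iterates of the bounded operator $A\in\mathcal{B}(\mathcal{H})$ applied to an initial state $x_{0}$ or $x_{-2}$, plus a corresponding finite sum of translates of $w\in W\subseteq\mathcal{H}$. A straightforward induction along the two branches (positive index from $x_{0}$, negative index from $x_{-2}$) yields a uniform bound $\|x_{\lambda}\|_{\mathcal{H}}\le C_{K}$ for $\lambda\in[2K]$, where $C_{K}$ depends on $K$, $\|A\|$, $\|x_{0}\|$, $\|x_{-2}\|$ and $\|w\|$.

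Next I would use the Bessel hypothesis on $\{g_{j}\}_{j \in \Lambda}$: there exists $\beta>0$ with $\sum_{j \in \Lambda}|\langle f,g_{j}\rangle|^{2}\le\beta\|f\|_{\mathcal{H}}^{2}$ for every $f\in\mathcal{H}$. Applying this to $f=x_{\lambda}$ gives, for each fixed $\lambda\in[2K]$,
\begin{align*}
\|r_{\lambda}\|_{\ell^2(\Lambda)}^{2}=\sum_{j \in \Lambda}|\langle x_{\lambda},g_{j}\rangle|^{2}\le \beta\|x_{\lambda}\|_{\mathcal{H}}^{2}\le \beta C_{K}^{2}<\infty,
\end{align*}
so every row $r_{\lambda}$ lies in $\ell^2(\Lambda)$.

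Finally, summing the row norms over the finite index set $[2K]$ gives
\begin{align*}
\|D(x_{0},x_{-2},w)\|_{\ell^2(\Lambda)\to\C^{[2K]}}=\sum_{\lambda \in [2K]}\|r_{\lambda}\|_{\ell^2(\Lambda)}\le |[2K]|\sqrt{\beta}\,C_{K}<\infty,
\end{align*}
which is precisely the membership condition for $\mathcal{B}(\ell^2(\Lambda),\C^{[2K]})$. The argument is essentially routine; the only point requiring a little care is the bookkeeping for $\|x_{\lambda}\|_{\mathcal{H}}$ across the three piecewise branches of \eqref{Dynamical System}, but since only finitely many iterations occur this is not a genuine obstacle.
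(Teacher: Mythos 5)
Your proof is correct and follows essentially the same route as the paper's: apply the Bessel bound $\sum_{j \in \Lambda}\lvert\langle x_{\lambda},g_{j}\rangle\rvert^{2}\le\beta\lVert x_{\lambda}\rVert_{\mathcal{H}}^{2}$ to each row and then use the finiteness of the index set $[2K]$. You are in fact slightly more careful than the paper, which leaves implicit the (easy) fact that each $\lVert x_{\lambda}\rVert_{\mathcal{H}}$ is finite and computes the sum of squared row norms rather than the sum of row norms appearing in the stated definition of $\lVert\cdot\rVert_{\ell^2(\Lambda)\to\C^{[2K]}}$ — a discrepancy that is harmless for finiteness but which your version avoids.
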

	
	\begin{proof}
		Since $\{g_j\}_{j \in \Lambda} \subset \mathcal{H} $ is a Bessel sequence  with Bessel bound $\beta>0$. Then,
		\begin{eqnarray*}
			\norm{D(x_{0},x_{-2},w)}_{\ell^2(\Lambda) \rightarrow \C^{[2K]}} = \sum_{\lambda \in [2K]}\sum_{j \in \Lambda}{\abs{\langle x_{\lambda},g_{j}\rangle}}^{2}
			 \leq  \beta\sum_{\lambda \in [2K]}{\norm{x_\lambda}}^{2}_{\mathcal{H}} < \infty.
		\end{eqnarray*}
		Hence, the data matrix $D(x_{0},x_{-2},w)=[\langle x_{\lambda},g_{j}\rangle]_{\lambda \in [2K],\,j \in \Lambda} \in \mathcal{B}(\ell^2(\Lambda),\C^{[2K]})$.
	\end{proof}
	
	The next proposition provides  conditions for the non-uniform discrete dynamical system \eqref{Dynamical System}, under which the image of data operator becomes a closed subspace of $\mathcal{B}(\ell^2(\Lambda),\C^{[2K]})$.
	
	\begin{proposition}\label{P3.11}
		For the non-uniform discrete dynamical system \eqref{Dynamical System} with any initial states $x_{0},x_{-2} \in  \mathcal{H}$, and $1 \leq |[2K]| < \infty$, the image of the data operator $D$ defined by
		$$ D:\mathcal{H} \times \mathcal{H} \times \mathcal{H} \longrightarrow \mathscr{B}(\ell^2(\Lambda),\C^{[2K]}) $$
		$$D(x_{0},x_{-2},w):=[\langle x_{\lambda},g_{j}\rangle]_{\lambda \in [2K],\,j \in \Lambda} $$
		is a closed subspace in $\mathcal{B}(\ell^2(\Lambda),\C^{[2K]})$ if $\{g_j\}_{j \in \Lambda}$ is a frame for $\mathcal{H}$.
	\end{proposition}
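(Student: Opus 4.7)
The plan is to prove closedness directly: given a sequence $D_n := D(x_0^{(n)},x_{-2}^{(n)},w^{(n)})$ converging to some $T$ in $\mathcal{B}(\ell^2(\Lambda),\C^{[2K]})$, produce a triple $(\tilde{x}_0,\tilde{x}_{-2},\tilde{w})\in \mathcal{H}^3$ with $D(\tilde{x}_0,\tilde{x}_{-2},\tilde{w}) = T$. The crucial observation is that the $\lambda$-th row of $D(x_0,x_{-2},w)$ is precisely $\mathscr{A}(x_\lambda)$, where $\mathscr{A}:\mathcal{H}\to \ell^2(\Lambda)$, $\mathscr{A}(f) = \{\langle f,g_j\rangle\}_{j\in\Lambda}$, is the analysis operator of $\{g_j\}_{j\in\Lambda}$, and $x_\lambda$ denotes the $\lambda$-th state produced by \eqref{Dynamical System}. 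Since $[2K]$ is finite, the norm $\|\cdot\|_{\ell^2(\Lambda)\to \C^{[2K]}}$ makes convergence $D_n\to T$ equivalent to row-wise $\ell^2(\Lambda)$-convergence $\mathscr{A}(x_\lambda^{(n)})\to r_\lambda$ for every $\lambda\in [2K]$, where $r_\lambda$ denotes the $\lambda$-th row of $T$.

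Next, I would invoke the frame hypothesis, which supplies the lower bound $\|\mathscr{A}f\|_{\ell^2(\Lambda)}\geq \sqrt{\alpha_0}\|f\|_{\mathcal{H}}$, so that $\mathscr{A}$ is bounded below and in particular has closed range. Since $\{\mathscr{A}(x_\lambda^{(n)})\}_n$ is Cauchy in $\ell^2(\Lambda)$ for each $\lambda\in[2K]$, the lower bound forces $\{x_\lambda^{(n)}\}_n$ to be Cauchy in $\mathcal{H}$, producing limits $\tilde{x}_\lambda := \lim_n x_\lambda^{(n)}\in\mathcal{H}$, and continuity of $\mathscr{A}$ yields $\mathscr{A}(\tilde{x}_\lambda) = r_\lambda$.

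To extract $\tilde{w}$, I would exploit the presence of the indices $0$ and $r/N$ in $[2K]$ (available for $K\geq 1$): the first-step relation $x_{r/N}^{(n)} = Ax_0^{(n)} + w^{(n)}$ holds for each $n$, and boundedness of $A$ then gives $w^{(n)}\to \tilde{x}_{r/N}-A\tilde{x}_0 =: \tilde{w}$. Passing to the limit in each of the four branches of \eqref{Dynamical System} — using that $A$ is continuous and each $x_\lambda^{(n)}\to\tilde{x}_\lambda$ — shows by induction on $|\lambda|$ that the limiting states $\tilde{x}_\lambda$ are precisely the states of the system driven by the triple $(\tilde{x}_0,\tilde{x}_{-2},\tilde{w})$. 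Consequently $D(\tilde{x}_0,\tilde{x}_{-2},\tilde{w}) = T$, which proves that the image of $D$ is closed in $\mathcal{B}(\ell^2(\Lambda),\C^{[2K]})$.

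The main hurdle is bookkeeping rather than analysis: one must check that the four cases of \eqref{Dynamical System} can be lifted cleanly to the limit across positive and negative indices and across the two residue classes modulo $2$, and that the base relation at $r/N$ is actually available in $[2K]$ so that $\tilde{w}$ can be read off from a single step. The essential analytic content — that the frame lower bound converts $\ell^2(\Lambda)$-convergence of the rows into $\mathcal{H}$-convergence of the states — is precisely where the hypothesis that $\{g_j\}_{j\in\Lambda}$ is a frame (rather than merely a Bessel sequence) enters the argument; under only a Bessel assumption, $\mathscr{A}$ need not be bounded below and this lifting step can fail.
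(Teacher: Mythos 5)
Your proposal is correct and follows essentially the same route as the paper's proof: both convert the $\ell^2(\Lambda)$-convergence of the rows into $\mathcal{H}$-convergence of the states via the lower frame bound, recover $w$ from the first-step relation $x_{r/N}=Ax_0+w$, and then pass to the limit in the dynamics to identify $T$ as a data matrix. The only (cosmetic) differences are that you obtain the limit states directly by a Cauchy argument in $\mathcal{H}$ where the paper first invokes closedness of the range of the analysis operator, and you propagate the limit through the recursion by induction where the paper substitutes the explicit solution formula $x_\lambda = A^{\cdot}x_{0}+(I+A+\cdots)w$.
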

	
	\begin{proof}
		Let $\{(x_{0}^{k},x_{-2}^{k},w^{k})\}_{k \in \N }$ be a sequence in $\mathcal{H} \times\mathcal{H} \times \mathcal{H}$ such that
		 $\{D(x_{0}^{k},x_{-2}^{k},w^{k})\}_{k \in \N}$ converges to $T \in \mathcal{B}(\ell^2(\Lambda),\C^{|[2N]|})$, i.e.,
		 % \begin{align*}
		 		$ \lim_{k \rightarrow \infty}D(x_{0}^{k},x_{-2}^{k},w^{k})=T $.
		% \end{align*}
	  By the definition of $\mathcal{B}(\ell^2(\Lambda),\C^{[2K]})$, $T$ is of the form
	  \begin{align*}
	  	T =(r_{-2K},r_{-2K+\frac{r}{N}},\cdots, r_{-2},r_{-2+\frac{r}{N}},r_{0},r_{\frac{r}{N}},\cdots,r_{2K-2},r_{2K-2+\frac{r}{N}}),
	  \end{align*}
		where, $r_{i}$ denotes the rows of $T$, and for each $\lambda \in [2K]$,
		\begin{align*}
			r_{\lambda}:=(a_{\lambda-2K},a_{\lambda-2K+\frac{r}{N}},\cdots, a_{\lambda-2},a_{\lambda-2+\frac{r}{N}},a_{\lambda0},a_{\lambda\frac{r}{N}},\cdots,a_{\lambda2K-2},a_{\lambda2K-2+\frac{r}{N}}) \in \ell^2(\Lambda).
		\end{align*}
       Since $\{g_j\}_{j \in \Lambda}$  is a frame for $\mathcal{H}$. So, the image of the analysis operator $$R:\mathcal{H} \longrightarrow \ell^2(\Lambda)$$
		$$R(h) ={(\langle h,g_{j}\rangle)}_{j \in \Lambda}$$
		is a closed linear subspace of $\ell^2(\Lambda)$. Note that,
		\begin{align*}
		r_{0}=\lim_{k \rightarrow \infty}R(x_{0}^{k}) \quad
		\text{and} \quad
		r_{-2}=\lim_{k \rightarrow \infty}R(x_{-2}^{k}),
		\end{align*}
		where the above limits exist in $\ell^2(\Lambda)$-norm. Since the image of $R$ is closed, there exist
		$x_{0}$ and $x_{-2}$ in $\mathcal{H}$ such that
		\begin{align}
			r_{0}=R(x_{0})= {(\langle x_{0},g_{j}\rangle)}_{j \in \Lambda}
			\quad 	\text{and} \quad
			r_{-2}=R(x_{-2}):={(\langle x_{-2},g_{j}\rangle)}_{j \in \Lambda}.
			\label{eq:3.5}
		\end{align}
		Since $\{g_j\}_{j \in \Lambda}$ is a frame with optimal lower and upper bounds $\alpha$  and $\beta$, respectively.
Thus,
\begin{align*}
			{\norm{x_{0}^{k}-x_{0}}}^{2}_{\mathcal{H}} \leq  \frac{1}{\alpha}\sum_{j \in \Lambda}{\abs{\langle x_{0}^{k}-x_{0},g_{j}\rangle}}^{2}
			=\frac{1}{\alpha}	{\Big\|R(x_{0}^{k})-\underbrace{R(x_{0})}_{r_{0}}}\Big\|^{2}_{\ell^2(\Lambda)},
\intertext{and}
			{\norm{x_{-2}^{k}-x_{-2}}}^{2}_{\mathcal{H}} \leq  \frac{1}{\alpha}\sum_{j \in \Lambda}{\abs{\langle x_{-2}^{k}-x_{-2},g_{j}\rangle}}^{2}
			=\frac{1}{\alpha}	{\Big\|R(x_{-2}^{k})-\underbrace{R(x_{-2})}_{r_{-2}}}\Big\|^{2}_{\ell^2(\Lambda)},
\end{align*}
		and thus by using \eqref{eq:3.5}, we have
			\begin{align}
			\notag
			\lim_{k \rightarrow \infty}{\norm{x_{0}^{k}-x_{0}}}^{2}_{\mathcal{H}}  & \leq \lim_{k \rightarrow \infty}\frac{1}{\alpha}	{\norm{R(x_{0}^{k})-{R(x_{0})}}}^{2}_{\ell^2(\Lambda)}=0, \\
			\notag
 \intertext{and}
			\lim_{k \rightarrow \infty}{\norm{x_{-2}^{k}-x_{-2}}}^{2}_{\mathcal{H}} & \leq \lim_{k \rightarrow \infty}\frac{1}{\alpha}	{\norm{R(x_{-2}^{k})-{R(x_{-2  })}}}^{2}_{\ell^2(\Lambda)}=0.
			\label{eq:3.6}
		\end{align}
This implies that $\{x_{0}^{k}\}_{k \in \N}$ converges to $x_{0}$ and $\{x_{-2}^{k}\}_{k \in \N}$ converges to $x_{-2}$.
		Similarly,
		\begin{align*}
			r_{\frac{r}{N}}=\lim_{k \rightarrow \infty}R(x_{\frac{r}{N}}^{k})
			= \lim_{k \rightarrow \infty}R(Ax_{0}^{k}+ w^{k})
			=R(Ax_{0})+\lim_{k \rightarrow \infty}R(w^{k}),
	\intertext{and}
		r_{-2+\frac{r}{N}}=\lim_{k \rightarrow \infty}R(x_{-2+\frac{r}{N}}^{k})
			= \lim_{k \rightarrow \infty}R(Ax_{-2}^{k}+ w^{k})
			=R(Ax_{-2})+\lim_{k \rightarrow \infty}R(w^{k}).
		\end{align*}
		In particular, $\{R(w^{k})\}_{k \in \N}$ converges in $\ell^2(\Lambda)$, and since the image of $R$ is closed, there exists
		$w \in  \mathcal{H}$ such that
		\begin{align*}
		\lim_{k \rightarrow \infty}	{\norm{R(w^{k})-{R(w)}}}^{2}_{\ell^2(\Lambda)}=0.
		\end{align*}
	By using the same argument as in \eqref{eq:3.6}, we have that $\{w^{k}\}_{k \in \N}$ converges to $w$ in $\mathcal{H}$.
		Thus,
		\begin{align*}
			r_{\frac{r}{N}}=R(Ax_{0}+w) \quad
			\text{and} \quad
			r_{-2+\frac{r}{N}}=R(Ax_{-2}+w).
		\end{align*}
	Finally, as the $\lambda$-th state of the non-uniform discrete dynamical system \eqref{Dynamical System}, with arbitrary initial states $x_{0}^{k},x_{-2}^{k} \in  \mathcal{H}$ and source term
		$w^{k} \in W $ can be expressed as
		\[
		x_{\lambda}^{k}=
		\begin{cases}
			A^{\lambda}x_{0}^{k}+(I+A+A^{2}+\cdots+A^{\lambda-1})w^{k}, \,\, \lambda \in  2\Z^{+}; &\\
			A^{\lambda+1-\frac{r}{N}}x_{0}^{k}+(I+A+A^{2}+\cdots+A^{\lambda-\frac{r}{N}})w^{k},\,\, \lambda \in  2\Z^{+}+\frac{r}{N} \cup \left\{\frac{r}{N}\right\}; &\\
			A^{-\lambda-2}x_{-2}^{k}+(I+A+A^{2}+\cdots+A^{-\lambda-3})w^{k},\,\, \lambda \in  2\Z^{-}; &\\
			A^{-\lambda-1+\frac{r}{N}}x_{-2}^{k}+(I+A+A^{2}+\cdots+A^{-\lambda-2+\frac{r}{N}})w^{k},\,\, \lambda \in  2\Z^{-}+\frac{r}{N},
		\end{cases}
		\]
		and similarly, the $\lambda$-th of  the non-uniform discrete dynamical system \eqref{Dynamical System}, with arbitrary initial states $x_{0},x_{-2} \in  \mathcal{H}$ and source term
		$w \in W $ can be expressed as
		\[
		x_{\lambda}=
		\begin{cases}
			A^{\lambda}x_{0}+(I+A+A^{2}+\cdots+A^{\lambda-1})w ,\,\, \lambda \in  2\Z^{+}; &\\
			A^{\lambda+1-\frac{r}{N}}x_{0}+(I+A+A^{2}+\cdots+A^{\lambda-\frac{r}{N}})w ,\,\, \lambda \in  2\Z^{+}+\frac{r}{N} \cup \left\{\frac{r}{N}\right\};&\\
			A^{-\lambda-2}x_{-2}+(I+A+A^{2}+\cdots+A^{-\lambda-3})w,\,\, \lambda \in  2\Z^{-};&\\
			A^{-\lambda-1+\frac{r}{N}}x_{-2}+(I+A+A^{2}+\cdots+A^{-\lambda-2+\frac{r}{N}})w,\,\, \lambda \in  2\Z^{-}+\frac{r}{N}.
		\end{cases}
		\]
		Therefore, we have
		\begin{align*}
			r_{\lambda} =&\lim_{k \rightarrow \infty}R(x_{\lambda}^{k}) \; \text{for all} \; \lambda \in \Lambda\\
			=&	\begin{cases}
				\lim_{k \rightarrow \infty}R(A^{\lambda}x_{0}^{k}+(I+A+A^{2}+\cdots+A^{\lambda-1})w^{k}),\,\,\lambda \in  2\Z^{+};&\\
				\lim_{k \rightarrow \infty}R(A^{\lambda+1-\frac{r}{N}}x_{0}^{k}+(I+A+A^{2}+\cdots+A^{\lambda-\frac{r}{N}})w^{k}),\,\, \lambda \in  2\Z^{+}+\frac{r}{N} \cup \left\{\frac{r}{N}\right\};&\\
				\lim_{k \rightarrow \infty}R(A^{-\lambda-2}x_{-2}^{k}+(I+A+A^{2}+\cdots+A^{-\lambda-3})w^{k}),\,\, \lambda \in  2\Z^{-};&\\
				\lim_{k \rightarrow \infty}R(A^{-\lambda-1+\frac{r}{N}}x_{-2}^{k}+(I+A+A^{2}+\cdots+A^{-\lambda-2+\frac{r}{N}})w^{k}),\,\, \lambda \in  2\Z^{-}+\frac{r}{N},
			\end{cases}\\
			=&	\begin{cases}
				R(A^{\lambda}x_{0}+(I+A+A^{2}+\cdots+A^{\lambda-1})w),\,\, \lambda \in  2\Z^{+};&\\
				R(A^{\lambda+1-\frac{r}{N}}x_{0}+(I+A+A^{2}+\cdots+A^{\lambda-\frac{r}{N}})w),\,\, \lambda \in  2\Z^{+}+\frac{r}{N} \cup \left\{\frac{r}{N}\right\};&\\
				R(A^{-\lambda-2}x_{-2}+(I+A+A^{2}+\cdots+A^{-\lambda-3})w),\,\,\lambda \in  2\Z^{-};&\\
				R(A^{-\lambda-1+\frac{r}{N}}x_{-2}+(I+A+A^{2}+\cdots+A^{-\lambda-2+\frac{r}{N}})w),\,\,\lambda \in  2\Z^{-}+\frac{r}{N},
			\end{cases}\\
			 = & R(x_{\lambda}), \, \lambda \in \Lambda.
		\end{align*}
		This implies that
	%	\begin{align*}
	$	\lim_{k \rightarrow \infty}D(x_{0}^{k},x_{-2}^{k},w^{k})=T=	D(x_{0},x_{-2},w) $.
	%	\end{align*}
	Thus, $T$ belongs to the image of the data operator $D$. Hence, the image of $D$ is a closed subspace in $\mathcal{B}(\ell^2(\Lambda),\C^{[2K]})$.
	\end{proof}
	
	Taking exhortation from  \cite[Theorem 3.2]{CFSRDS}, the following theorem incorporates a necessary and sufficient condition on $\{g_j\}_{j \in \Lambda} \subset \mathcal{H} $ for the existence of the reconstruction operator $\mathscr{R}:\mathcal{B}(\ell^2(\Lambda),\C^{[2K]}) \longrightarrow \mathcal{H}$.
	
	\begin{theorem}\label{Th3.12}
		Let $\{g_j\}_{j \in \Lambda} \subset \mathcal{H} $ be a Bessel sequence with Bessel bound  $\beta>0$. Then, for the non-uniform discrete dynamical system \eqref{Dynamical System}, with any arbitrary initial states $x_{0},x_{-2} \in  \mathcal{H}$, the source term
		$w \in W$ can be stably recovered from the measurements $D(x_{0},x_{-2},w)=[\langle x_{\lambda},g_{j}\rangle]_{\lambda \in [2K],\,j \in \Lambda}$ for some $1 \leq |[2K]| < \infty$ if and only if $ \{g_j\}_{j \in \Lambda}$ is a frame for  $\mathcal{H}$.
	\end{theorem}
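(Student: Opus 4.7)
My plan is to prove the two implications separately.

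\textbf{Sufficiency.} Assuming $\{g_j\}_{j\in\Lambda}$ is a frame for $\mathcal{H}$ with frame operator $\Theta$, I would exploit the identity $w=x_{r/N}-Ax_0$ inherent in the dynamical system and recover each state from its data row via the canonical dual. Letting $\Phi:\ell^2(\Lambda)\to\mathcal{H}$, $\Phi((c_j)_j)=\sum_j c_j\,\Theta^{-1}g_j$, denote the synthesis operator of the canonical dual frame, I define
$$\mathscr{R}(T):=\Phi(r_{r/N})-A\,\Phi(r_0), \qquad T\in\mathcal{B}(\ell^2(\Lambda),\C^{[2K]}),$$
where $\{r_\lambda\}_{\lambda\in[2K]}$ denotes the rows of $T$. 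Linearity is clear, and the estimate $\|\mathscr{R}(T)\|_{\mathcal{H}}\leq \|\Phi\|(1+\|A\|)\|T\|_{\ell^2(\Lambda)\to\C^{[2K]}}$ (using that the sum norm dominates the maximum row norm) establishes boundedness. On actual data $T=D(x_0,x_{-2},w)$ the dual-frame reconstruction yields $\Phi(r_0)=x_0$ and $\Phi(r_{r/N})=Ax_0+w$, whence $\mathscr{R}(T)=(Ax_0+w)-Ax_0=w$, as required.

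\textbf{Necessity.} Conversely, assume a bounded linear $\mathscr{R}$ exists. The Bessel hypothesis already provides the upper frame bound, so only the lower bound needs to be extracted from the norm of $\mathscr{R}$. I would specialize the triple to $(x_0,x_{-2},w)=(0,0,h)$ for $h\in W$: the resulting states are $x_0=x_{-2}=0$, $x_{r/N}=x_{-2+r/N}=h$, and at further indices $x_\lambda=p_\lambda(A)h$ for explicit polynomials $p_\lambda$. In the minimal case $|[2K]|=4$, only the four base rows appear --- two zero rows at $\lambda\in\{0,-2\}$ and two copies of $R(h):=\{\langle h,g_j\rangle\}_{j\in\Lambda}$ at $\lambda\in\{r/N,-2+r/N\}$ --- yielding
$$\|D(0,0,h)\|_{\ell^2(\Lambda)\to\C^{[2K]}}=2\|R(h)\|_{\ell^2(\Lambda)}.$$
The recovery identity $\mathscr{R}(D(0,0,h))=h$ combined with boundedness of $\mathscr{R}$ then produces $\|h\|_{\mathcal{H}}\leq 2\|\mathscr{R}\|\,\|R(h)\|_{\ell^2(\Lambda)}$, i.e., the lower frame bound $(4\|\mathscr{R}\|^2)^{-1}\|h\|^2_{\mathcal{H}}\leq\sum_{j\in\Lambda}|\langle h,g_j\rangle|^2$.

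\textbf{Anticipated main obstacle.} The clean form of the necessity argument applies cleanly in the minimal case $|[2K]|=4$; for larger $K$ the data matrix carries additional rows $R(p_\lambda(A)h)$ whose contribution to $\|D(0,0,h)\|$ can only be bounded via the Bessel estimate $\sqrt{\beta}\|p_\lambda(A)\|\|h\|$, a quantity involving $\|h\|$ rather than $\|R(h)\|$, and hence not directly absorbable into the frame inequality. The principal difficulty is therefore to arrange the argument so that the minimal-$K$ case suffices; since the theorem only asserts stable recovery \emph{for some} finite $K$, and since the sufficiency construction already works at $K=1$, it is natural to invoke the $|[2K]|=4$ instance on both sides of the equivalence. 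A secondary subtlety is that the necessity derivation produces the frame bound only on $W$, so the stated conclusion ``frame for $\mathcal{H}$'' is meaningful under the implicit convention $W=\mathcal{H}$.
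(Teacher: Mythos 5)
Your sufficiency argument is sound and is in substance identical to the paper's: the operator $\mathscr{R}(T)=\Phi(r_{r/N})-A\,\Phi(r_0)$ is exactly the paper's reconstruction operator $\sum_{j}\bigl(a_{\frac{r}{N}j}-\langle A(\sum_k a_{0k}\tilde g_k),g_j\rangle\bigr)\tilde g_j$ written compactly through the dual synthesis map, and your boundedness estimate is if anything cleaner than the paper's appeal to Lemma \ref{L:2.1}. Your observation that the necessity direction can only produce a lower frame bound on $W$, so that the conclusion ``frame for $\mathcal{H}$'' presupposes $W=\mathcal{H}$, is also correct and matches the paper's own (tacit) convention in this theorem, where the proof lets $w$ range over all of $\mathcal{H}$.

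The genuine gap is in the necessity direction, and it is precisely the obstacle you flagged but did not overcome. The hypothesis is existential in $K$: you are handed a bounded $\mathscr{R}$ on $\mathcal{B}(\ell^2(\Lambda),\C^{[2K]})$ for \emph{some} $K$, possibly large, and you cannot replace it by one for $|[2K]|=4$ --- recoverability from more rows does not imply recoverability from fewer, so ``invoking the $K=1$ instance on both sides'' silently strengthens the hypothesis of the implication you are trying to prove. For $K>1$ the choice $(0,0,h)$ produces rows $R\bigl(\sum_{i=0}^{m_\lambda}A^{i}h\bigr)$ whose norms, as you note, are only controlled by $\|h\|$ and not by $\|R(h)\|$, so the inequality $\|h\|\leq\|\mathscr{R}\|\,\|D(0,0,h)\|$ does not close. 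The paper escapes this with a two-step device you would need to reproduce: first substitute the \emph{stationary} data $x_0=x_{-2}$, $w=(I-A)x_0$, for which every state equals $x_0$ and every row equals $R(x_0)$, obtaining $\|(I-A)x_0\|^2\leq |[2K]|\,c\sum_{j}|\langle x_0,g_j\rangle|^2$ and, by iteration, $\|(I-A)^{k}x_0\|^2\leq c_k\sum_{j}|\langle x_0,g_j\rangle|^2$ for every $k$; then, in the $(0,0,w)$ computation, expand $\sum_{i=0}^{m}A^{i}=\sum_{k=0}^{m}\gamma_{k}(I-A)^{k}$ via $A=I-(I-A)$, so that each extra row's contribution $\sum_j|\langle\sum_k\gamma_k(I-A)^k w,g_j\rangle|^2$ is absorbed into a constant multiple of $\sum_j|\langle w,g_j\rangle|^2$ using the first step. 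Without this (or an equivalent mechanism) your necessity proof covers only the case where the hypothesized $K$ equals $1$, which is strictly weaker than the stated theorem.
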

	
	\begin{proof}
		First, assume that the source term
		$w \in W$ can be stably  recovered from the measurements $D(x_{0},x_{-2},w)=[\langle x_{\lambda},g_{j}\rangle]_{\lambda \in [2K],\,j \in \Lambda}$ in $|[2K]|$ time of iterations. That is, there exists a bounded linear operator
		$\mathscr{R}:\mathcal{B}(\ell^2(\Lambda),\C^{[2K]}) \longrightarrow \mathcal{H}$ such that
		\begin{align*}
			\mathscr{R}(D(x_{0},x_{-2},w))=w \; \text{for all} \; x_{0},x_{-2},w  \in \mathcal{H}.
		\end{align*}
Hence, there exists a positive constant $c$ such
		\begin{align}
			& \norm{\mathscr{R}(T)} \leq c\sum_{\lambda \in [2K]}\sum_{j \in \Lambda}\abs{a_{\lambda j}}^{2}, \,\, T \in \mathcal{B}^{s}(\ell^2(\Lambda),\C^{[2K]}),
			\label{eq:3.7}
		\intertext{and}
		& \mathscr{R}(D(x_{0},x_{-2},w))=w, \, x_{0},x_{-2}, w \in \mathcal{H}.
			\label{eq:3.8}
		\end{align}
		Our aim is to prove that $\{g_j\}_{j \in \Lambda}$ is a frame for  $\mathcal{H}$.
		By using \eqref{eq:3.7} and \eqref{eq:3.8}, we compute
		\begin{align}
			\notag
			{\norm{w}}^{2}_{\mathcal{H}} \leq & c\sum_{\lambda \in [2K]}\sum_{j \in \Lambda}\abs{\langle x_{\lambda},g_{j}\rangle}^{2}\\
			\notag
			= & c\sum_{j \in \Lambda}\sum_{\lambda \in [2K]}\abs{\langle x_{\lambda},g_{j}\rangle}^{2}\\
		\notag
			=& c \Big[\sum_{j \in \Lambda}\Big\{\sum_{\lambda \in [2K] \cap 2\Z^{+}}\abs{\langle x_{\lambda},g_{j}\rangle}^{2}+\sum_{\lambda \in [2K] \cap 2\Z^{+}+\frac{r}{N} \cup \left\{\frac{r}{N}\right\}}\abs{\langle x_{\lambda},g_{j}\rangle}^{2} \Big. \Big. \\
			\notag
			&
			\Big. \Big. + \sum_{\lambda \in [2K] \cap 2\Z^{-}}\abs{\langle x_{\lambda},g_{j}\rangle}^{2}+\sum_{\lambda \in [2K] \cap 2\Z^{-}+\frac{r}{N}}\abs{\langle x_{\lambda},g_{j}\rangle}^{2}\Big\}\Big]\\
			\notag
			=& c \Big[\sum_{j \in \Lambda}\Big\{\sum_{\lambda \in [2K] \cap 2\Z^{+}}\Big|\langle A^{\lambda}x_{0}
          +\sum_{k=0}^{\lambda-1}A^{k}w,g_{j}\rangle \Big|^{2}  \Big. \Big. \\
          \notag
          & +\sum_{\lambda \in [2K] \cap 2\Z^{+}+\frac{r}{N} \cup \left\{\frac{r}{N}\right\}}\Big|\langle A^{\lambda+1-\frac{r}{N}}x_{0}+\sum_{k=0}^{\lambda-\frac{r}{N}}A^{k}w,g_{j}\rangle\Big|^{2} \\
         \notag
			& + \sum_{\lambda \in [2K] \cap 2\Z^{-}}\Big|\langle A^{-\lambda-2}x_{-2}+\sum_{k=0}^{-\lambda-3}A^{k}w,g_{j}\rangle\Big|^{2} \\
			 & \Big. \Big.+\sum_{\lambda \in [2K] \cap 2\Z^{-}+\frac{r}{N}}\Big|\langle A^{-\lambda-1+\frac{r}{N}}x_{-2}+\sum_{k=0}^{-\lambda-2+\frac{r}{N}}A^{k}w,g_{j}\rangle\Big|^{2}\Big\}\Big].
			\label{eq:3.9}
		\end{align}
		Since, the equation \eqref{eq:3.9} holds for every $x_{0},x_{-2} \in \mathcal{H}$. So, in particular for $x_{0}=x_{-2}$, choose $w=(I-A)x_{0}$.
		So, the above equation can be written as
		\begin{align*}
			{\norm{(I-A)x_{0}}}^{2}_{\mathcal{H}} \leq& c \Big[\sum_{j \in \Lambda}\Big\{\sum_{\lambda \in [2K] \cap 2\Z^{+}}\abs{\langle x_{0},g_{j}\rangle}^{2}+\sum_{\lambda \in [2K] \cap 2\Z^{+}+\frac{r}{N} \cup \left\{\frac{r}{N}\right\}}\abs{\langle x_{0},g_{j}\rangle}^{2} \Big. \Big. \\
			&
		\Big. \Big. + \sum_{\lambda \in [2K] \cap 2\Z^{-}}\abs{\langle x_{0},g_{j}\rangle}^{2}+\sum_{\lambda \in [2K] \cap 2\Z^{-}+\frac{r}{N}}\abs{\langle x_{0},g_{j}\rangle}^{2}\Big\}\Big]\\
		= & c \sum_{j \in \Lambda}\sum_{\lambda \in [2K]}\abs{\langle x_{0},g_{j}\rangle}^{2}\\
		= & |[2K]| c \sum_{j \in \Lambda}\abs{\langle x_{0},g_{j}\rangle}^{2}.
		\end{align*}
	Repeated use of this estimation implies
		\begin{align}
			\notag
			{\norm{(I-A)^{k}x_{0}}}^{2}_{\mathcal{H}} \leq & |[2K]| c \sum_{j \in \Lambda}\abs{\langle (I-A)^{k-1}x_{0},g_{j}\rangle}^{2}\\
			\notag	
			\leq
	&    |[2K]| c \beta \norm{(I-A)^{k-1}x_{0}}^{2}\\
	\notag
	\leq & {|[2K]|}^{2}c^2 \beta \sum_{j \in \Lambda}\abs{\langle (I-A)^{k-2}x_{0},g_{j}\rangle}^{2}\\
	\notag
	& \vdots \\	
	\leq & c_{k} \sum_{j \in \Lambda}\abs{\langle x_{0},g_{j}\rangle}^{2}, \; x_{0} \in \mathcal{H},
	\end{align}
%	where we have used the fact that $\{g_j\}_{j \in \Lambda}$ be any Bessel sequence in $\mathcal{H}$ with optimal Bessel bound $\beta>0$ and
where $c_{k}={|[2K]|}^{k}c^{k}{\beta}^{k-1}$.  Write $A=I-(I-A)$. Then, we observe that
%	depends on the constant $c$, on the number of time iteration of the dynamical system $\abs{[2K]}$ and on the power $k \geq 1$.\\
    \begin{align*}
	 	\sum_{k=0}^{\lambda -1}A^{k}w= & \sum_{k=0}^{\lambda -1}\alpha_{n,k}(I-A)^{k}w, \,\, \lambda \in  2\Z^{+};\\
	 	\sum_{k=0}^{\lambda -\frac{r}{N}}A^{k}w= & \sum_{k=0}^{\lambda -\frac{r}{N}}\beta_{n,k}(I-A)^{k}w,\,\, \lambda \in  2\Z^{+}+\frac{r}{N} \cup \left\{\frac{r}{N}\right\}; \\
	 	\sum_{k=0}^{-\lambda-3}A^{k}w =& \sum_{k=0}^{-\lambda-3}\gamma_{n,k}(I-A)^{k}w,\,\, \lambda \in  2\Z^{-};\\
	 		\sum_{k=0}^{-\lambda-2+\frac{r}{N}}A^{k}w = & \sum_{k=0}^{-\lambda-2+\frac{r}{N}}\delta_{n,k}(I-A)^{k}w,\,\, \lambda \in  2\Z^{-}+\frac{r}{N},
	 \end{align*}
	 where $\alpha_{n,k},\beta_{n,k},\gamma_{n,k}$ and $\delta_{n,k}$ are binomial coefficients.
	  We go back to \eqref{eq:3.9} again, and put $x_{0}=x_{-2}=0$. Then, we compute
	\begin{align*}
	 	{\norm{w}}^{2}_{\mathcal{H}} \leq &  c \Big[\sum_{j \in \Lambda}\Big\{\sum_{\lambda \in [2K] \cap 2\Z^{+}}\Big|\langle \sum_{k=0}^{\lambda-1}A^{k}w,g_{j}\rangle\Big|^{2}+\sum_{\lambda \in [2K] \cap 2\Z^{+}+\frac{r}{N} \cup \left\{\frac{r}{N}\right\}}\Big|\langle \sum_{k=0}^{\lambda-\frac{r}{N}}A^{k}w,g_{j}\rangle\Big|^{2} \Big. \Big. \\
	 	& +\Big. \Big. \sum_{\lambda \in [2K] \cap 2\Z^{-}}\Big|\langle \sum_{k=0}^{-\lambda-3}A^{k}w,g_{j}\rangle\Big|^{2}+\sum_{\lambda \in [2K] \cap 2\Z^{-}+\frac{r}{N}}\Big|\langle \sum_{k=0}^{-\lambda-2+\frac{r}{N}}A^{k}w,g_{j}\rangle\Big|^{2}\Big\}\Big]\\
	 	= &  c \Big[\sum_{j \in \Lambda}\Big\{\sum_{\lambda \in [2K] \cap 2\Z^{+}}\Big|\langle \sum_{k=0}^{\lambda -1}\alpha_{n,k}(I-A)^{k}w,g_{j}\rangle\Big|^{2} \Big. \Big. \\
	 	& +\sum_{\lambda \in [2K] \cap 2\Z^{+}+\frac{r}{N} \cup \left\{\frac{r}{N}\right\}}\Big|\langle \sum_{k=0}^{\lambda -\frac{r}{N}}\beta_{n,k}(I-A)^{k}w,g_{j}\rangle\Big|^{2} +\sum_{\lambda \in [2K] \cap 2\Z^{-}}\Big|\langle \sum_{k=0}^{-\lambda-3}\gamma_{n,k}(I-A)^{k}w,g_{j}\rangle\Big|^{2} \\
	  	& \Big. \Big. +\sum_{\lambda \in [2K] \cap 2\Z^{-}+\frac{r}{N}}\Big|\langle \sum_{k=0}^{-\lambda-2+\frac{r}{N}}\delta_{n,k}(I-A)^{k}w,g_{j}\rangle\Big|^{2}\Big\}\Big]\\
	 	= &  c \Big[\sum_{j \in \Lambda}\Big\{{(\sup\abs{\alpha_{n,k}})}^{2}\sum_{\lambda \in [2K] \cap 2\Z^{+}}\Big|\langle \sum_{k=0}^{\lambda -1}(I-A)^{k}w,g_{j}\rangle\Big|^{2} \Big. \Big.\\
	 	& + {(\sup\abs{\beta_{n,k}})}^{2}\sum_{\lambda \in [2K] \cap 2\Z^{+}+\frac{r}{N} \cup \left\{\frac{r}{N}\right\}}\Big|\langle \sum_{k=0}^{\lambda -\frac{r}{N}}(I-A)^{k}w,g_{j}\rangle\Big|^{2} \\
	 	&  + {(\sup\abs{\gamma_{n,k}})}^{2} \sum_{\lambda \in [2K] \cap 2\Z^{-}}\Big|\langle \sum_{k=0}^{-\lambda-3}(I-A)^{k}w,g_{j}\rangle\Big|^{2}+ \\
	 	& \Big. \Big. +   {(\sup\abs{\delta_{n,k}})}^{2}\sum_{\lambda \in [2K] \cap 2\Z^{-}+\frac{r}{N}}\Big|\langle \sum_{k=0}^{-\lambda-2+\frac{r}{N}}(I-A)^{k}w,g_{j}\rangle\Big|^{2}\Big\}\Big]\\
	 	\leq & \, c \, \max\{a',b',c',d'\}\Big[\sum_{j \in \Lambda}\Big\{\sum_{\lambda \in [2K] \cap 2\Z^{+}}\Big|\langle \sum_{k=0}^{\lambda -1}(I-A)^{k}w,g_{j}\rangle\Big|^{2} \Big.\Big.\\
	 	& +\sum_{\lambda \in [2K] \cap 2\Z^{+}+\frac{r}{N} \cup \left\{\frac{r}{N}\right\}}\Big|\langle \sum_{k=0}^{\lambda -\frac{r}{N}}(I-A)^{k}w,g_{j}\rangle\big|^{2}+ \sum_{\lambda \in [2K] \cap 2\Z^{-}}\Big|\langle \sum_{k=0}^{-\lambda-3}(I-A)^{k}w,g_{j}\rangle\Big|^{2}  \\
	 	& \Big. \Big. + \sum_{\lambda \in [2K] \cap 2\Z^{-}+\frac{r}{N}}\Big|\langle \sum_{k=0}^{-\lambda-2+\frac{r}{N}}(I-A)^{k}w,g_{j}\rangle\Big|^{2}\Big\}\Big],\\
& \Big(\text{where} \, \,		a'={(\sup\abs{\alpha_{n,k}})}^{2},
	 		b'={(\sup\abs{\beta_{n,k}})}^{2},
	 		c'={(\sup\abs{\gamma_{n,k}})}^{2}\, \text{and} \,
	 		d'={(\sup\abs{\delta_{n,k}})}^{2}\Big)\\
			= &  c \max\{a',b',c',d'\}\Big[\sum_{j \in \Lambda}\Big\{ \sum_{k=0}^{1}\abs{\langle(I-A)^{k}w,g_{j}\rangle}^{2}+\sum_{k=0}^{3}\abs{\langle(I-A)^{k}w,g_{j}\rangle}^{2}+\cdots \Big. \Big. \\
			&  + \sum_{k=0}^{2K-3}\abs{\langle(I-A)^{k}w,g_{j}\rangle}^{2}+ \sum_{k=0}^{0}\abs{\langle(I-A)^{k}w,g_{j}\rangle}^{2} +\sum_{k=0}^{2}\abs{\langle(I-A)^{k}w,g_{j}\rangle}^{2} + \cdots  \\
			& + \sum_{k=0}^{2K-2}\abs{\langle(I-A)^{k}w,g_{j}\rangle}^{2}+ \cdots \sum_{k=0}^{1}\abs{\langle(I-A)^{k}w,g_{j}\rangle}^{2}+\sum_{k=0}^{3}\abs{\langle(I-A)^{k}w,g_{j}\rangle}^{2}+\cdots  \\
			& \Big. \Big.  +\sum_{k=0}^{2K-3}\abs{\langle(I-A)^{k}w,g_{j}\rangle}^{2}+\sum_{k=0}^{0}\abs{\langle(I-A)^{k}w,g_{j}\rangle}^{2} +\sum_{k=0}^{2}\abs{\langle(I-A)^{k}w,g_{j}\rangle}^{2} + \cdots\\
			& + \sum_{k=0}^{2K-2}\abs{\langle(I-A)^{k}w,g_{j}\rangle}^{2}
			\Big\}\Big]\\
			\leq & 2c \max\{a',b',c',d'\} (2K-1)\sum_{j \in \lambda}\sum_{k=0}^{2K-2}\abs{\langle(I-A)^{k}w,g_{j}\rangle}^{2}\\
			 \leq & \xi \beta\sum_{k=0}^{2K-2}{\norm{(I-A)^{k}w}}^{2}_{\mathcal{H}} + \xi \sum_{j \in \lambda}\abs{\langle w,g_{j}\rangle}^{2}\\
			 \leq & \xi \beta\sum_{k=0}^{2K-2}c_{k} \sum_{j \in \Lambda}\abs{\langle w,g_{j}\rangle}^{2} + \xi \sum_{j \in \lambda}\abs{\langle w,g_{j}\rangle}^{2}\\
			 = & \xi \Big(\beta\sum_{k=0}^{2K-2}c_{k}+1\Big) \sum_{j \in \lambda}\abs{\langle w,g_{j}\rangle}^{2}\\
			 \leq & \xi'\sum_{j \in \lambda}\abs{\langle w,g_{j}\rangle}^{2},
		\end{align*}
		where $\xi =2c\max\{a',b',c',d'\} (2K-1) $ and $\xi'=\xi \Big(\beta\sum_{k=0}^{2K-2}c_{k}+1\Big)$.
		Therefore,
		\begin{align*}
			\frac{1}{\xi'}{\norm{w}}^{2}_{\mathcal{H}} \leq  \sum_{j \in \Lambda}\abs{\langle w,g_{j}\rangle}^{2} \leq \beta{\norm{w}}^{2}_{\mathcal{H}} \; \text{for all} \; w \in \mathcal{H}.
		\end{align*}
		Hence, $\{g_j\}_{j \in \Lambda}$ is a frame for  $\mathcal{H}$.
		
		 Conversely, let us assume that  $\{g_j\}_{j \in \Lambda}$ is a frame for  $\mathcal{H}$ with frame bounds $\alpha,\beta> 0 $.	Let $\{\tilde{g_i}\}_{i \in \Lambda}$ be the canonical dual frame of $\{g_j\}_{j \in \Lambda}$. Then, for each $j \in \Lambda$,
		  	\begin{align}
		  		A^{*}g_{j}=\sum_{i \in \Lambda}c_{ij}g_{i} \; \text{where}, \text{each} \; c_{ij}=\langle A^{*}g_{j},\tilde{g_i}\rangle,\, i \in \Lambda.
		  		\label{eq:4.1}
		  	\end{align}
		From the non-uniform discrete dynamical system \eqref{Dynamical System}, we have
		  	\begin{align*}
		  	 w= &	x_{\lambda+\frac{r}{N}} -Ax_{\lambda},\,\, \lambda \in 2\Z; \\
		  	w= & x_{\lambda+2-\frac{r}{N}} - Ax_{\lambda},\,\, \lambda \in 2\Z^{+}+\frac{r}{N} \cup \left\{\frac{r}{N}\right\}; \\
		  	w=&	x_{\lambda-2-\frac{r}{N}} - Ax_{\lambda}, \,\, \lambda \in 2\Z^{-}+\frac{r}{N}.
		  	\end{align*}
		\textbf{Case-(i)} When $\lambda \in 2\Z$, we compute
		  	\begin{align}
		  		\notag
		  		\langle w, g_{j} \rangle =& 	\langle x_{\lambda+\frac{r}{N}} -Ax_{\lambda}, g_{j} \rangle\\
		  		\notag
		  		= & \langle x_{\lambda+\frac{r}{N}}, g_{j} \rangle - \langle Ax_{\lambda}, g_{j} \rangle\\
		  		\notag
		  		= & \langle x_{\lambda+\frac{r}{N}}, g_{j} \rangle - \langle x_{\lambda}, A^{*}g_{j} \rangle\\
		  		\notag
		  		= & \langle x_{\lambda+\frac{r}{N}}, g_{j} \rangle - \langle x_{\lambda}, \sum_{i \in \Lambda}c_{ij}g_{i} \rangle\\
		  		= & \langle x_{\lambda+\frac{r}{N}}, g_{j} \rangle - \sum_{i \in \Lambda}\bar{c_{ij}}\langle x_{\lambda}, g_{i} \rangle.
		  		\label{eq:3.12}
		  	\end{align}
		  Note that the equation \eqref{eq:3.12} satisfies for any two consecutive states $x_{\lambda}$  and $x_{\lambda+\frac{r}{N}}, \;\lambda \in 2\Z$.
		  Since $\{g_j\}_{j \in \Lambda}$ is a frame for  $\mathcal{H}$. Therefore, $w \in \mathcal{H}$ can be expressed as
	\begin{align*}
		w=\sum_{j \in \Lambda  }\langle w, g_{j} \rangle \tilde{g_{j}}.
	\end{align*}
By the equation \eqref{eq:3.12}, every coefficient $\langle w, g_{j} \rangle$ can be expressed in terms of the measurements $\{\langle x_{\lambda},g_{j}\rangle , \langle x_{\lambda+ \frac{r}{N}},g_{j}\rangle\}_{j \in \Lambda}$ for all values of $\lambda \in 2\Z$.
Therefore, we have
%the following reconstruction expression for $w$ in terms of the measurements of the dynamical system \ref{Dynamical System}
	\begin{align}
		w= \sum_{j \in \Lambda}\Big( \langle x_{\lambda+\frac{r}{N}}, g_{j} \rangle - \sum_{i \in \Lambda}\bar{c_{ij}}\langle x_{\lambda}, g_{i} \rangle\Big)\tilde{g_{j}}.
		\label{eq:3.13}
	\end{align}
In particular, if we take the space samples of the two states of the system $x_{0}$ and $x_{\frac{r}{N}}$, then we get
	\begin{align}
		w= \sum_{j \in \Lambda}\Big( \langle x_{\frac{r}{N}}, g_{j} \rangle - \sum_{i \in \Lambda}\bar{c_{ij}}\langle x_{0}, g_{i} \rangle\Big)\tilde{g_{j}}.
		\label{eq:3.14}
	\end{align}
	Now, from the equation \eqref{eq:3.12}, we have
	\begin{align}
		\notag
		\sum_{j \in \Lambda}{\Big|\sum_{i \in \Lambda}\bar{c_{ij}}\langle x_{0}, g_{i} \rangle\Big|}^{2} \leq & \sum_{j \in \Lambda}{\abs{\langle Ax_{0}, g_{j} \rangle}}^{2}\\
		\notag
		\leq & \beta{\norm{Ax_{0}}}^{2}_{\mathcal{H}}\\
		\notag
		\leq & \beta\norm{A}^{2}_{\mathcal{H} \rightarrow \mathcal{H}}{\norm{x_{0}}}^{2}_{\mathcal{H}}\\
		\leq & \frac{\beta}{\alpha}\norm{A}^{2}_{\mathcal{H} \rightarrow \mathcal{H}}\sum_{j \in \Lambda}{\abs{\langle x_{0}, g_{j} \rangle}}^{2},
		\label{eq:3.15}
		\end{align}
	and thus
	\begin{align}
		\notag
		{\Big\|\sum_{j \in \Lambda}\Big( \langle x_{\frac{r}{N}}, g_{j} \rangle - \sum_{i \in \Lambda}\bar{c_{ij}}\langle x_{0}, g_{i} \rangle\Big)\tilde{g_{j}}\Big\|}^{2}_{\mathcal{H}} \leq & \frac{1}{\alpha} \sum_{j \in \Lambda}{\Big|\langle x_{\frac{r}{N}}, g_{j} \rangle - \sum_{i \in \Lambda}\bar{c_{ij}}\langle x_{0}, g_{i} \rangle\Big|}^{2}\\
		\notag
		\leq & \frac{2}{\alpha} \sum_{j \in \Lambda}\Big({\abs{\langle x_{\frac{r}{N}}, g_{j} \rangle}}^{2} + {\Big|\sum_{i \in \Lambda}\bar{c_{ij}}\langle x_{0}, g_{i} \rangle\Big|}^{2}\Big)\\
		\notag
		\leq & \frac{2}{\alpha} \sum_{j \in \Lambda}{\abs{\langle x_{\frac{r}{N}}, g_{j} \rangle}}^{2} + 2\frac{\beta}{\alpha^{2}}\norm{A}^{2}_{\mathscr{H} \rightarrow \mathcal{H}}\sum_{j \in \Lambda}{\abs{\langle x_{0}, g_{j} \rangle}}^{2}\\
		\leq & \tilde{c}\Big(\sum_{j \in \Lambda}{\abs{\langle x_{\frac{r}{N}}, g_{j} \rangle}}^{2} +\sum_{j \in \Lambda}{\abs{\langle x_{0}, g_{j} \rangle}}^{2} \Big),
		\label{eq:3.16}
	\end{align}
	where $\tilde{c}=2\max\Big\{ \frac{1}{\alpha},\frac{\beta}{\alpha^{2}}\norm{A}^{2}_{\mathcal{H} \rightarrow \mathcal{H}}\Big\}$.
	These estimations imply the boundedness of the reconstruct formula \eqref{eq:3.14}.
	
	Now we define the operator
		$\mathscr{R}:\mathcal{B}^{s}(\ell^2(\Lambda),\C^{[2]}) \longrightarrow \mathcal{H}$ as
		\begin{align*}
				\mathscr{R}(T)=\sum_{j \in \Lambda}\Big( a_{\frac{r}{N}j} - \sum_{i \in \Lambda}\bar{c_{ij}}\langle \sum_{k \in \Lambda}a_{0k}\tilde{g_{k}}, g_{i} \rangle\Big)\tilde{g_{j}},
		\end{align*}
	where $[2]=\{0,\frac{r}{N}\}$ and $D=[a_{\lambda j}]_{\lambda \in [2],\, j \in \Lambda}$. Equivalently, by the equation \eqref{eq:4.1}, we have
		\begin{align}
			\notag
			\mathscr{R}(T)=&\sum_{j \in \Lambda}\Big( a_{\frac{r}{N}j} - \langle \sum_{k \in \Lambda}a_{0k}\tilde{g_{k}}, A^{*}g_{j} \rangle\Big)\tilde{g_{j}}\\
			= & \sum_{j \in \Lambda}\Big( a_{\frac{r}{N}j} - \langle A\left( \sum_{k \in \Lambda}a_{0k}\tilde{g_{k}}\right), g_{j} \rangle\Big)\tilde{g_{j}}.
			\label{eq:3.17}
			\end{align}
  By using the equation \eqref{eq:3.17}, and the same bounds as in \eqref{eq:3.15} and \eqref{eq:3.16}, we can prove that
	$\mathscr{R}$ is a well-defined bounded operator. Indeed,
	\begin{align*}
		\norm{\mathscr{R}(T)}^{2 }_{\mathcal{H}}=& 	\Big\|\sum_{j \in \Lambda}\Big( a_{\frac{r}{N}j} - \langle A\Big( \sum_{k \in \Lambda}a_{0k}\tilde{g_{k}}\Big), g_{j} \rangle\Big)\tilde{g_{j}}\Big\|^{2 }_{\mathcal{H}}\\
		\leq & \tilde{c}\Big(\sum_{j \in \Lambda}{\abs{\langle a_{\frac{r}{N}j} \rangle}}^{2} +\sum_{j \in \Lambda}{\Big|\langle \sum_{k \in \Lambda}a_{0k}\tilde{g_{k}}, g_{j} \rangle\Big|}^{2} \Big)\\
			\leq & \tilde{c}\Big(\sum_{j \in \Lambda}{\abs{\langle a_{\frac{r}{N}j} \rangle}}^{2} +\sum_{j \in \Lambda}{\abs{\langle a_{0j} \rangle}}^{2} \Big)\\
			= & \tilde{c}{\norm{T}}^{2}_{\mathcal{B}(\ell^2(\Lambda),\C^{[2]})}.
			\end{align*}
			 To justify the last inequality, let
			 $u=\sum_{k \in \Lambda}a_{0k}\tilde{g_{k}} \in \mathcal{H}$.
	Since $\{g_j\}_{j \in \Lambda}$ and $\{\tilde{g_j}\}_{j \in \Lambda}$ are canonical pair of dual frames, we have that
	\begin{align*}
		u=\sum_{j \in \Lambda  }\langle u, g_{j} \rangle \tilde{g_{j}}.
	\end{align*}
	By Lemma \ref{L:2.1}, the coefficients have the least $\ell^{2}(\Lambda)$-norm. In particular,
	\begin{align*}
	\sum_{j \in \Lambda}{\Big|\langle \underbrace{\sum_{k \in \Lambda}a_{0k}\tilde{g_{k}}}_{u} ,g_{j} \rangle\Big|}^{2}=\sum_{j \in \Lambda}\abs{\langle u,g_{j}\rangle}^{2} \leq \sum_{j \in \Lambda}{\abs{ a_{0j}}}^{2}.	
	\end{align*}
	Finally, the operator $\mathscr{R}$ is linear, and by the equation \eqref{eq:3.14}, we have
	\begin{align*}
		\mathscr{R}(\left[\langle x_{\lambda},g_{j}\rangle\right]_{\lambda,\, j \in \Lambda})= & \sum_{j \in \Lambda}\Big( \langle x_{\frac{r}{N}}, g_{j} \rangle - \sum_{i \in \Lambda}\bar{c_{ij}}\langle \sum_{k \in \Lambda} \langle x_{0},g_{k}\rangle\tilde{g_{k}}, g_{i} \rangle\Big)\tilde{g_{j}}\\
		= & \sum_{j \in \Lambda}\Big( \langle x_{\frac{r}{N}}, g_{j} \rangle - \sum_{i \in \Lambda}\bar{c_{ij}}\langle x_{0}, g_{i} \rangle\Big)\tilde{g_{j}}\\
		= & w.
	\end{align*}
	So, it is the reconstruction operator.\\
	
	\textbf{Case-(ii)} When $\lambda \in 2\Z^{+}+\frac{r}{N} \cup \left\{\frac{r}{N}\right\}$, we compute
	\begin{align*}
		\notag
		\langle w, g_{j} \rangle =& 	\langle x_{\lambda+2-\frac{r}{N}} -Ax_{\lambda}, g_{j} \rangle\\
		\notag
		= & \langle x_{\lambda+2-\frac{r}{N}}, g_{j} \rangle - \langle Ax_{\lambda}, g_{j} \rangle\\
		\notag
		= & \langle x_{\lambda+2-\frac{r}{N}}, g_{j} \rangle - \langle x_{\lambda}, A^{*}g_{j} \rangle\\
		\notag
		= & \langle x_{\lambda+2-\frac{r}{N}}, g_{j} \rangle - \langle x_{\lambda}, \sum_{i \in \Lambda}c_{ij}g_{i} \rangle\\
		= & \langle x_{\lambda+2-\frac{r}{N}}, g_{j} \rangle - \sum_{i \in \Lambda}\bar{c_{ij}}\langle x_{\lambda}, g_{i} \rangle.
		\end{align*}
	By using the same procedure as we have done for \textbf{Case-(i)}, we define the reconstruction operator
			$\mathscr{R}:\mathcal{B}^{s}(\ell^2(\Lambda),\C^{[2]}) \longrightarrow \mathcal{H}$ as
		\begin{align*}
		\mathscr{R}(T)=\sum_{j \in \Lambda}\Big( a_{2j} - \sum_{i \in \Lambda}\bar{c_{ij}}\langle \sum_{k \in \Lambda}a_{\frac{r}{N}k}\tilde{g_{k}}, g_{i} \rangle\Big)\tilde{g_{j}},	
		\end{align*}
		where $[2]=\{\frac{r}{N},2\}$ and $D=[a_{\lambda j}]_{\lambda \in [2],\, j \in \Lambda}$. Note that
		\begin{align*}
			\mathscr{R}(\left[\langle x_{\lambda},g_{j}\rangle\right]_{\lambda,\, j \in \Lambda})= & \sum_{j \in \Lambda}\Big( \langle x_{2}, g_{j} \rangle - \sum_{i \in \Lambda}\bar{c_{ij}}\langle \sum_{k \in \Lambda} \langle x_{\frac{r}{N}},g_{k}\rangle\tilde{g_{k}}, g_{i} \rangle\Big)\tilde{g_{j}}\\
			= & \sum_{j \in \Lambda}\Big( \langle x_{2}, g_{j} \rangle - \sum_{i \in \Lambda}\bar{c_{ij}}\langle x_{\frac{r}{N}}, g_{i} \rangle\Big)\tilde{g_{j}}\\
			= & w.
		\end{align*}
		
	\textbf{Case-(iii)} When $\lambda \in 2\Z^{-}+\frac{r}{N} $, we compute
				\begin{align*}
				\notag
				\langle w, g_{j} \rangle =& 	\langle x_{\lambda-2-\frac{r}{N}} -Ax_{\lambda}, g_{j} \rangle\\
				\notag
				= & \langle x_{\lambda-2-\frac{r}{N}}, g_{j} \rangle - \langle Ax_{\lambda}, g_{j} \rangle\\
				\notag
				= & \langle x_{\lambda-2-\frac{r}{N}}, g_{j} \rangle - \langle x_{\lambda}, A^{*}g_{j} \rangle\\
				\notag
				= & \langle x_{\lambda-2-\frac{r}{N}}, g_{j} \rangle - \langle x_{\lambda}, \sum_{i \in \Lambda}c_{ij}g_{i} \rangle\\
				= & \langle x_{\lambda-2-\frac{r}{N}}, g_{j} \rangle - \sum_{i \in \Lambda}\bar{c_{ij}}\langle x_{\lambda}, g_{i} \rangle.
			\end{align*}
			By using the same procedure as we have done for \textbf{Case-(i)}, we define the reconstruction operator
			$\mathscr{R}:\mathcal{B}^{s}(\ell^2(\Lambda),\C^{[2]}) \longrightarrow \mathcal{H}$ as
			\begin{align*}
				\mathscr{R}(T)=\sum_{j \in \Lambda}\Big( a_{-4j} - \sum_{i \in \Lambda}\bar{c_{ij}}\langle \sum_{k \in \Lambda}a_{-2+\frac{r}{N}k}\tilde{g_{k}}, g_{i} \rangle\Big)\tilde{g_{j}},	
			\end{align*}
			where $[2]=\{-2+\frac{r}{N},-4\}$ and $D=[a_{\lambda j}]_{\lambda \in [2],\, j \in \Lambda}$. Then,
			\begin{align*}
				\mathscr{R}(\left[\langle x_{\lambda},g_{j}\rangle\right]_{\lambda,\, j \in \Lambda})= & \sum_{j \in \Lambda}\Big( \langle x_{-4}, g_{j} \rangle - \sum_{i \in \Lambda}\bar{c_{ij}}\langle \sum_{k \in \Lambda} \langle x_{-2+\frac{r}{N}},g_{k}\rangle\tilde{g_{k}}, g_{i} \rangle\Big)\tilde{g_{j}}\\
				= & \sum_{j \in \Lambda}\Big( \langle x_{-4}, g_{j} \rangle - \sum_{i \in \Lambda}\bar{c_{ij}}\langle x_{-2+\frac{r}{N}}, g_{i} \rangle\Big)\tilde{g_{j}}\\
				= & w,
			\end{align*}
	which completes the proof.
			 	\end{proof}
			 	
			 	The following example illustrates the Theorem \ref{Th3.12}.
			
			 \begin{example}
			 	   Let $\mathcal{H}=\ell^2(\Lambda)$ and consider the non-uniform discrete dynamical system \eqref{Dynamical System} with $A(e_{j})=\lambda_{j}e_{j}$ where $\lambda_{j}=	\begin{cases}
			 	   	\frac{1}{2^{\frac{j}{2}}}, & \; \text{if} \; j \in 2\Z^{+}\cup\{0\}; \\
			 	   	{2^{\frac{j}{2}}}, & \; \text{if} \; j \in 2\Z^{-}; \\
			 	   	0, & \; \text{if} \, j \in 2\Z+\frac{r}{N}.
			 	  \end{cases} $

			 	    Let $\{g_j\}_{j \in \Lambda}=\{e_{j}\}_{j \in \Lambda}$ be the standard orthonormal basis for $\ell^2(\Lambda)$, which forms a Parseval frame for $\ell^2(\Lambda)$  with frame bound $1$.
			 	Let $x_{0}=e_{\frac{r}{N}},x_{-2}=e_{-2}$. Then
			 	\begin{align*}
			 	Ae_{j}=A^{*}e_{j}=\sum_{i \in \Lambda}c_{ij}e_{i} \;\; \text{where, each} \;\; c_{ij}=\langle A^{*}e_{j},e_{i}\rangle, \, i \in \Lambda.	
			 	\end{align*}
			 	\textbf{Case-(i)} When $\lambda \in 2\Z$, define $ \mathscr{R}:\mathcal{B}^{s}(\ell^2(\Lambda),\C^{[2]}) \longrightarrow \mathcal{H} \text{ by} $
			 	\begin{align*}
			 			\mathscr{R}(T)=\sum_{j \in \Lambda}\Big( a_{\frac{r}{N}j} - \sum_{i \in \Lambda}\bar{c_{ij}}\langle \sum_{k \in \Lambda}a_{0k}e_{k}, e_{i} \rangle\Big)e_{j}, \text{where}\, [2]=\Big\{0,\frac{r}{N}\Big\}.
			 	\end{align*}\\
			 		\textbf{Case-(ii)} When $\lambda \in 2\Z^{+}+\frac{r}{N} \cup \left\{\frac{r}{N}\right\}$, define
			 		$\mathscr{R}:\mathcal{B}^{s}(\ell^2(\Lambda),\C^{[2]}) \longrightarrow \mathcal{H}\, \text{by} $
			 		\begin{align*}
			 			\mathscr{R}(T)=\sum_{j \in \Lambda}\Big( a_{2j} - \sum_{i \in \Lambda}\bar{c_{ij}}\langle \sum_{k \in \Lambda}a_{\frac{r}{N}k}e_{k}, e_{i} \rangle\Big)e_{j},\text{where}\, [2]=\Big\{\frac{r}{N},2\Big\}.
			 		\end{align*}
			 \textbf{Case-(iii)} When $\lambda \in 2\Z^{-}+\frac{r}{N} $, define
			 		$ \mathscr{R}:\mathcal{B}^{s}(\ell^2(\Lambda),\C^{[2]}) \longrightarrow \mathcal{H} \quad \text{by}$
			 		\begin{align*}
			 			\mathscr{R}(T)=\sum_{j \in \Lambda}\Big( a_{-4j} - \sum_{i \in \Lambda}\bar{c_{ij}}\langle \sum_{k \in \Lambda}a_{-2+\frac{r}{N}k}e_{k}, e_{i} \rangle\Big)e_{j},\text{where}\, [2]=\Big\{-2+\frac{r}{N},-4\Big\}.
			 		\end{align*}
			Then, in each case   $	\norm{\mathscr{R}(T)}^{2 }_{\mathcal{H}} \leq 2{\norm{T}}^{2}_{\mathcal{B}(\ell^2(\Lambda),\C^{[2]})}$, i.e., $\mathscr{R}$ is a bounded linear operator and
			 	$\mathscr{R}(D(x_{0},x_{-2},w))=w$. Hence, $w$ can be recovered stably in finitely many iterations.
			 \end{example}

		In the next result, we constrain the source term to lie within a closed subspace $W \subset \mathcal{H}$. From a practical perspective, this case is particularly significant, despite being more mathematically intricate. For instance, in applications like environmental monitoring, prior knowledge about the locations of primary pollution sources translates to considering closed subspaces of the ambient space
		$\mathcal{H}$. However, the mathematical formulation of a solution in this context proves to be more nuanced. Now we give a necessary condition for the stable recovery of source term of the the non-uniform discrete dynamical system \eqref{Dynamical System} in finitely many iterations.
	\begin{theorem}\label{Th3.14}
Suppose
\begin{enumerate}
\item $\{g_j\}_{j \in \Lambda}$ is a Bessel sequence in $\mathcal{H}$ with Bessel bound $\beta>0$.
\item $W$ is a closed subspace $\mathcal{H}$ and $P_{W}: \mathcal{H} \rightarrow  \mathcal{H} $ denotes the orthogonal projection onto $W$.
\end{enumerate}
Then, for the non-uniform discrete dynamical system \eqref{Dynamical System}, with any arbitrary initial states $x_{0},x_{-2} \in  \mathcal{H}$ and $1 \notin \sigma(A)$, $\{P_{W}(I-{A^*})^{-1}g_{j}\}_{j \in \Lambda}$ is a frame for $W$ if the source term
		$w \in W$ can be stably recovered from the measurements $D(x_{0},x_{-2},w)=[\langle x_{\lambda},g_{j}\rangle]_{\lambda \in [2K],\, j \in \Lambda}$   for some $1 \leq |[2K]| < \infty$.
	\end{theorem}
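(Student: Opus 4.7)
The plan is to exploit the hypothesis $1 \notin \sigma(A)$ to reduce the problem to testing the stability inequality against stationary trajectories, which will expose $\{P_W(I-A^*)^{-1}g_j\}_{j \in \Lambda}$ as the analysis system for $W$. Since the source term can be stably recovered, there exists a bounded $\mathscr{R}$ on $\mathcal{B}(\ell^2(\Lambda),\mathbb{C}^{[2K]})$ with $\mathscr{R}(D(x_0,x_{-2},w))=w$, giving a constant $c>0$ such that
\begin{equation*}
\|w\|_{\mathcal{H}}\le c\,\|D(x_0,x_{-2},w)\|_{\ell^2(\Lambda)\to\mathbb{C}^{[2K]}}
\quad\text{for all } x_0,x_{-2}\in\mathcal{H},\ w\in W.
\end{equation*}

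For the lower frame bound, I would specialize to stationary initial states. Because $1\notin\sigma(A)$, the operator $I-A$ is invertible, so for a given $w\in W$ the choice $x_0=x_{-2}=(I-A)^{-1}w$ yields $Ax_0+w=x_0$, and by induction through the three cases of \eqref{Dynamical System} every state satisfies $x_\lambda=(I-A)^{-1}w$ for $\lambda\in\Lambda$. Therefore $\langle x_\lambda,g_j\rangle=\langle w,(I-A^*)^{-1}g_j\rangle=\langle w,P_W(I-A^*)^{-1}g_j\rangle$, using $w\in W$. Substituting this into the stability inequality and computing the $\mathcal{B}(\ell^2(\Lambda),\mathbb{C}^{[2K]})$-norm of the resulting data matrix (a finite sum of $\ell^2$-row norms, each equal to $(\sum_j|\langle w,P_W(I-A^*)^{-1}g_j\rangle|^2)^{1/2}$) will produce a bound of the form
\begin{equation*}
\|w\|_{\mathcal{H}}^2\le c^2|[2K]|^2\sum_{j\in\Lambda}\bigl|\langle w,P_W(I-A^*)^{-1}g_j\rangle\bigr|^2,\qquad w\in W,
\end{equation*}
which is the required lower frame bound.

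The upper frame bound follows directly from the Bessel hypothesis on $\{g_j\}_{j\in\Lambda}$. For any $w\in W$, one rewrites $\langle w,P_W(I-A^*)^{-1}g_j\rangle=\langle (I-A)^{-1}w,g_j\rangle$ by self-adjointness of $P_W$ together with $P_W w=w$, and then
\begin{equation*}
\sum_{j\in\Lambda}\bigl|\langle w,P_W(I-A^*)^{-1}g_j\rangle\bigr|^2
\le \beta\,\|(I-A)^{-1}w\|_{\mathcal{H}}^2
\le \beta\,\|(I-A)^{-1}\|^2\,\|w\|_{\mathcal{H}}^2.
\end{equation*}
Since each $P_W(I-A^*)^{-1}g_j$ lies in $W$, combining the two inequalities shows that $\{P_W(I-A^*)^{-1}g_j\}_{j\in\Lambda}$ is a frame for $W$.

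The main subtlety I expect is the bookkeeping needed to express the matrix $D(x_0,x_{-2},w)$ in the particular norm $\sum_{\lambda\in[2K]}\|r_\lambda\|_{\ell^2(\Lambda)}$ adopted for $\mathcal{B}(\ell^2(\Lambda),\mathbb{C}^{[2K]})$, together with verifying that the stationary choice $x_0=x_{-2}=(I-A)^{-1}w$ is consistent with all three recursive branches of \eqref{Dynamical System} (the cases $\lambda\in 2\mathbb{Z}$, $\lambda\in 2\mathbb{Z}^{+}+\tfrac{r}{N}\cup\{\tfrac{r}{N}\}$, and $\lambda\in 2\mathbb{Z}^{-}+\tfrac{r}{N}$). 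Once the stationarity is verified, the rest of the argument is essentially an algebraic unwinding plus the Bessel estimate.
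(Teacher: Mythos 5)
Your proposal is correct and follows essentially the same route as the paper: both arguments test the stability inequality against the stationary choice $x_{0}=x_{-2}=(I-A)^{-1}w$ (the paper via the closed-form solution for $x_{\lambda}$, you directly from the recursion), identify $\langle x_{\lambda},g_{j}\rangle=\langle w,P_{W}(I-A^{*})^{-1}g_{j}\rangle$ for $w\in W$, and obtain the upper frame bound from the Bessel property of $\{g_{j}\}_{j\in\Lambda}$ together with the boundedness of $(I-A)^{-1}$. The only differences are cosmetic (constants such as $c^{2}|[2K]|^{2}$ versus $|[2K]|c$, reflecting the paper's own inconsistency between the stated norm on $\mathcal{B}(\ell^2(\Lambda),\C^{[2K]})$ and the squared form actually used).
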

	
	\begin{proof}
		Suppose that the source term
		$w \in W$ can be stably  recovered from the measurements $D(x_{0},x_{-2},w)=[\langle x_{\lambda},g_{j}\rangle]_{\lambda \in [2K],\,j \in \Lambda}$ in $|[2K]|$ time of iterations. That is, there exists a bounded linear operator
		$\mathscr{R}:\mathcal{B}(\ell^2(\Lambda),\C^{[2K]}) \longrightarrow \mathcal{H}$ such that
		\begin{align*}
			\mathscr{R}(D(x_{0},x_{-2},w))=w \; \text{for all} \; x_{0},x_{-2} \in \mathcal{H}, w \in W.
		\end{align*}
		Hence, there exists a positive constant $c$ such that
		\begin{align}
			& \norm{\mathscr{R}(T)} \leq c\sum_{\lambda \in [2K]}\sum_{j \in \Lambda}\abs{a_{\lambda j}}^{2}, \,\, T \in \mathcal{B}^{s}(\ell^2(\Lambda),\C^{[2K]}),
			\label{eq:3.18}
			\intertext{and}
			 & \mathscr{R}(D(x_{0},x_{-2},w))=w, \, x_{0},x_{-2} \in \mathcal{H},w \in W.
			\label{eq:3.19}
		\end{align}

	Our aim is to prove that $\{P_{W}(I-{A^*})^{-1}g_{j}\}_{j \in \Lambda}$ is a frame for $W$. By using \eqref{eq:3.18} and \eqref{eq:3.19}, we compute
		\begin{align*}
			{\norm{w}}^{2}_{\mathcal{H}} \leq & c\sum_{\lambda \in [2K]}\sum_{j \in \Lambda}\abs{\langle x_{\lambda},g_{j}\rangle}^{2}\\
			=& c \Big[\sum_{j \in \Lambda}\Big\{\sum_{\lambda \in [2K] \cap 2\Z^{+}}\abs{\langle x_{\lambda},g_{j}\rangle}^{2}+\sum_{\lambda \in [2K] \cap 2\Z^{+}+\frac{r}{N} \cup \left\{\frac{r}{N}\right\}}\abs{\langle x_{\lambda},g_{j}\rangle}^{2} \Big. \Big. \\
			&
			\Big. \Big. + \sum_{\lambda \in [2K] \cap 2\Z^{-}}\abs{\langle x_{\lambda},g_{j}\rangle}^{2}+\sum_{\lambda \in [2K] \cap 2\Z^{-}+\frac{r}{N}}\abs{\langle x_{\lambda},g_{j}\rangle}^{2}\Big\}\Big]\\
			=& c \Big[\sum_{j \in \Lambda}\Big\{\sum_{\lambda \in [2K] \cap 2\Z^{+}}\Big|\langle A^{\lambda}x_{0}+\sum_{i=0}^{\lambda-1}A^{i}w,g_{j}\rangle\Big|^{2} \Big. \Big. \\
			& +\sum_{\lambda \in [2K] \cap 2\Z^{+}+\frac{r}{N} \cup \left\{\frac{r}{N}\right\}}\Big|\langle A^{\lambda+1-\frac{r}{N}}x_{0}+\sum_{i=0}^{\lambda-\frac{r}{N}}A^{i}w,g_{j}\rangle\Big|^{2}\\
			& + \sum_{\lambda \in [2K] \cap 2\Z^{-}}\Big|\langle A^{-\lambda-2}x_{-2}+\sum_{i=0}^{-\lambda-3}A^{i}w,g_{j}\rangle\Big|^{2}\\
			& \Big. \Big.+\sum_{\lambda \in [2K] \cap 2\Z^{-}+\frac{r}{N}}\Big|\langle A^{-\lambda-1+\frac{r}{N}}x_{-2}+\sum_{i=0}^{-\lambda-2+\frac{r}{N}}A^{i}w,g_{j}\rangle\Big|^{2}\Big\}\Big]\\
			=&  c\Big[\sum_{j \in \Lambda}\Big\{\sum_{\lambda \in [2K] \cap 2\Z^{+}}\abs{\langle A^{\lambda}x_{0}+(I-A^{\lambda})(I-A)^{-1}w,g_{j}\rangle}^{2}  \Big. \Big. \\
			& +\sum_{\lambda \in [2K] \cap 2\Z^{+}+\frac{r}{N} \cup \left\{\frac{r}{N}\right\}}\abs{\langle A^{\lambda+1-\frac{r}{N}}x_{0}+(I-A^{\lambda+1-\frac{r}{N}})(I-A)^{-1}w,g_{j}\rangle}^{2} \\
			& +	\sum_{\lambda \in [2K] \cap 2\Z^{-}}\abs{\langle A^{-\lambda-2}x_{-2}+(I-A^{-\lambda-2})(I-A)^{-1}w,g_{j}\rangle}^{2} \\
			& \Big. \Big. + \sum_{\lambda \in [2K] \cap 2\Z^{-}+\frac{r}{N}}\abs{\langle A^{-\lambda-1+\frac{r}{N}}x_{-2}+(I-A^{-\lambda-1+\frac{r}{N}})(I-A)^{-1}w,g_{j}\rangle}^{2}\Big\}\Big],
		\end{align*}
		where we have used that the non-uniform discrete dynamical system \eqref{Dynamical System} can be expressed as
		\[
		x_{\lambda}=
		\begin{cases}
			A^{\lambda}x_{0}+(I-A^{\lambda})(I-A)^{-1}w,\,\,\lambda \in  2\Z^{+};&\\
			A^{\lambda+1-\frac{r}{N}}x_{0}+(I-A^{\lambda+1-\frac{r}{N}})(I-A)^{-1}w,\,\, \lambda \in  2\Z^{+}+\frac{r}{N} \cup \left\{\frac{r}{N}\right\};&\\
			A^{-\lambda-2}x_{-2}+(I-A^{-\lambda-2})(I-A)^{-1}w,\,\, \lambda \in  2\Z^{-};&\\
			A^{-\lambda-1+\frac{r}{N}}x_{-2}+(I-A^{-\lambda-1+\frac{r}{N}})(I-A)^{-1}w,\,\, \lambda \in  2\Z^{-}+\frac{r}{N}.
		\end{cases}
		\]
		For any $w \in W$, consider $x_{0}=x_{-2}=(I-A)^{-1}w$ and by putting it into the above inequality, we get
		\begin{align*}
			\norm{w}^{2}_{\mathcal{H}}  \leq  |[2K]|c\sum_{j \in \Lambda}\abs{\langle (I-A)^{-1}w,g_{j}\rangle}^{2}
			\leq c'\norm{(I-A)^{-1}w}^{2}
			\leq c''\norm{w}^{2}_{\mathcal{H}},
		\end{align*}
		where $c'= |[2K]| c \beta$ and $c''=|[2K]|c \beta\norm{(I-A)^{-1}}^{2}_{\mathcal{H} \rightarrow \mathcal{H}}$.
		Note that
		\begin{align*}
			\sum_{j \in \Lambda}\abs{\langle (I-A)^{-1}w,g_{j}\rangle}^{2} =\sum_{j \in \Lambda}\abs{\langle w,((I-A)^{-1})^{*}g_{j}\rangle}^{2}
			=\sum_{j \in \Lambda}\abs{\langle w,P_{W}(I-A^*)^{-1}g_{j}\rangle}^{2}.
		\end{align*}
		So, we have
		\begin{align*}
			\frac{1}{|[2K]|c} \norm{w}^{2}_{\mathcal{H}}  \leq \sum_{j \in \Lambda}\abs{\langle w,P_{W}(I-A^*)^{-1}g_{j}\rangle}^{2}
			 \leq c'' \norm{w}^{2}_{\mathcal{H}},\, w \in W.
		\end{align*}
		Therefore, $\{P_{W}(I-{A^*})^{-1}g_{j}\}_{j \in \Lambda}$ is a frame for $W$.
		
	\end{proof}
	
		Now, we present an example which ratifies that the recovery of source $w \in W \subset \mathcal{H}$ is not necessarily possible in finite number of iterations even if $\{P_{W}(I-{A^*})^{-1}g_{j}\}_{j \in \Lambda}$ is a frame for $W$.
	
		\begin{example}
		Let $\mathcal{H}=\ell^2(\Lambda)$ 	and $ W $ is the one-dimensional subspace of $\ell^2(\Lambda)$ generated by $w$, where
		\begin{align}
			\notag
			w = &(\ldots,w_{-4},w_{-4+\frac{r}{N}},w_{-2},w_{-2+\frac{r}{N}},w_{0},w_{\frac{r}{N}},w_{2},w_{2+\frac{r}{N}},\ldots)\\
			= &(\ldots,\frac{-1}{2^2},\frac{-1}{3^2},\frac{-1}{2},\frac{-1}
			{3},1,\frac{1}{3},\frac{1}{2},\frac{1}{3^2},\ldots).
			\label{eq:3.20}
		\end{align}
			Let
		\[
		\mathbf{A} =\diag{\cdots,\lambda_{-4},\lambda_{-4+\frac{r}{N}},\lambda_{-2}, \lambda_{-2+\frac{r}{N}},\lambda_{0},\lambda_{\frac{r}{N}}, \lambda_{2},\cdots},
	%	\begin{pmatrix}
	%		\ddots & & & \vdots \\
	%		& \lambda_{-4} & & \\
	%		& & \lambda_{-4+\frac{r}{N}} & \\
	%		& & & \lambda_{-2} & \\
	%		& & & & \lambda_{-2+\frac{r}{N}} & \\
	%		& & & & & \lambda_{0} &  \\
	%		\dots & & & & & & \lambda_{\frac{r}{N}} & \dots \\
	%		\dots& & & & & & & \lambda_{2} & \dots \\
	%		& & & & 	& & & & \ddots
	%	\end{pmatrix}
		\]
		where $0 < \lambda_{i} <1, i \in \Lambda$ and $\lambda_{i} \neq \lambda_{j} \; \text{if} \; i \neq j$.
		Then, $A$ acts as a bounded linear operator on $\ell^2(\Lambda)$.
		
		Let $g=(I-A)w$. Then, $P_{W}(I-A^{*})^{-1}g=w$. Therefore, $\{P_{W}(I-{A^*})^{-1}g\}$ is a frame for $W$. Moreover, by using \eqref{eq:3.20} and that $A$ is diagonal operator, we have
		\begin{align}		
			\notag
			& g \\
			\notag
			 =&(\ldots, g_{-4+\frac{r}{N}},g_{-2},g_{-2+\frac{r}{N}},g_{0},g_{\frac{r}{N}},g_{2},g_{2+\frac{r}{N}},\ldots)\\
		 =  & \Big(\ldots,-\frac{(1-\lambda_{-4+\frac{r}{N}})}{3^2},-\frac{(1-\lambda_{-2})}{2},-\frac{(1-\lambda_{-2+\frac{r}{N}})}
			{3},(1-\lambda_{0}),\frac{(1-\lambda_{\frac{r}{N}})}{3},\frac{(1-\lambda_{2})}{2},\frac{(1-\lambda_{2+\frac{r}{N}})}{3^2},\ldots\Big),
			\label{eq:3.21}
		\end{align}
	where each coordinate $g_{i}$ is non-zero.
	
		For some $c \neq 0$, consider the non-uniform discrete dynamical system
		\begin{align*}
			x_{\lambda+\frac{r}{N}}=& Ax_{\lambda} +cw,\,\, \lambda \in 2\Z;  \\
			x_{\lambda+2-\frac{r}{N}}=& Ax_{\lambda} +cw,\,\, \lambda \in 2\Z^{+}+\frac{r}{N} \cup \left\{\frac{r}{N}\right\};\\
			x_{\lambda-2-\frac{r}{N}} =& Ax_{\lambda} +cw,\,\, \lambda \in 2\Z^{-}+\frac{r}{N}.
		\end{align*}
		Then,
		\begin{align}
			\langle x_{\lambda},g \rangle =
			\begin{cases}
				\langle x_{0},A^{\lambda}g \rangle + c\langle w, K_{\lambda}g \rangle,\,\, \lambda \in  2\Z^{+};&\\
				\langle x_{0},	A^{\lambda+1-\frac{r}{N}}g \rangle + c\langle w, K_{\lambda}g \rangle,\,\,\lambda \in  2\Z^{+}+\frac{r}{N} \cup \left\{\frac{r}{N}\right\};&\\
				\langle x_{-2},	A^{-\lambda-2}g \rangle + c\langle w, K_{\lambda}g \rangle,\,\,\lambda \in  2\Z^{-};&\\
				\langle x_{-2},	A^{-\lambda-1+\frac{r}{N}}g \rangle + c\langle w, K_{\lambda}g \rangle,\,\,\lambda \in  2\Z^{-}+\frac{r}{N},
			\end{cases}
			\label{eq:3.22}
		\end{align}
		where
		\[
		K_{\lambda}=
		\begin{cases}
			I+A+A^{2}+\cdots+A^{\lambda-1},\,\, \lambda \in  2\Z^{+};&\\
			I+A+A^{2}+\cdots+A^{\lambda-\frac{r}{N}},\,\,\lambda \in  2\Z^{+}+\frac{r}{N} \cup \left\{\frac{r}{N}\right\};&\\
			I+A+A^{2}+\cdots+A^{-\lambda-3},\,\, \lambda \in  2\Z^{-};&\\
			I+A+A^{2}+\cdots+A^{-\lambda-2+\frac{r}{N}},\,\,\lambda \in  2\Z^{-}+\frac{r}{N}.
		\end{cases}
		\]
		Let
		\begin{align}
			\notag
			x_{0} =&(\ldots,0,0,a_{0},a_{\frac{r}{N}},\ldots,a_{2K-2},a_{2K-2+\frac{r}{N}},0,0,\ldots),
		\intertext{and}
			x_{-2} =&(\ldots,0,0,d_{-2K},d_{-2K+\frac{r}{N}},\ldots,d_{-2},d_{-2+\frac{r}{N}},0,0,\ldots).
			\label{eq:3.23}
		\end{align}
		Using \eqref{eq:3.20} and \eqref{eq:3.21} in terms of the coordinates of the vectors, for
	$\lambda=0,\frac{r}{N},2,2+\frac{r}{N},\ldots, \break 2K-2,2K-2+\frac{r}{N}$, the system \eqref{eq:3.22} can be expressed as
		\[
		\begin{bmatrix}
			\langle x_{0},g \rangle \\
			\langle x_{\frac{r}{N}},g \rangle \\
			\langle x_{2},g \rangle \\
			\vdots \\
			\langle x_{2K-2},g \rangle \\
			\langle x_{2K-2+\frac{r}{N}},g \rangle
		\end{bmatrix} =
		\begin{bmatrix}
			g_{0} & g_{\frac{r}{N}} & \cdots & g_{2K-2+\frac{r}{N}} & 0 \\
			\lambda_{0}g_{0} & \lambda_{\frac{r}{N}}g_{\frac{r}{N}} & \cdots & \lambda_{2K-2+\frac{r}{N}}g_{2K-2+\frac{r}{N}} & b_{\frac{r}{N}} \\
			\lambda^{2}_{0}g_{0} & \lambda^{2}_{\frac{r}{N}}g_{\frac{r}{N}} & \cdots  & \lambda^{2}_{2K-2+\frac{r}{N}}g_{2K-2+\frac{r}{N}} & b_{2} \\
			\vdots & \vdots & \cdots & \vdots & \vdots \\
			\lambda^{2K-2}_{0}g_{0} & \lambda^{2K-2}_{\frac{r}{N}}g_{\frac{r}{N}} & \cdots & \lambda^{2K-2}_{2K-2+\frac{r}{N}}g_{2K-2+\frac{r}{N}} & b_{2K-2} \\
			\lambda^{2K-1}_{0}g_{0} & \lambda^{2K-1}_{\frac{r}{N}}g_{\frac{r}{N}} & \cdots & \lambda^{2K-1}_{2K-2+\frac{r}{N}}g_{2K-2+\frac{r}{N}} & b_{2K-2+\frac{r}{N}} \\
		\end{bmatrix}
		\begin{bmatrix}
			a_{0} \\
			a_{\frac{r}{N}} \\
			\vdots \\
			a_{2K-2+\frac{r}{N}} \\
			c
		\end{bmatrix},
		\]
		where $ b_{i}= \langle w, K_{i}g \rangle, \,i \in \left\{0,\frac{r}{N},2,2+\frac{r}{N},\ldots,2K-2,2K-2+\frac{r}{N}\right\} $. Notice that the matrix is of dimension $2K \times (2K+1)$. Moreover, $g_{i} \neq 0, \, i \in \left\{0,\frac{r}{N},2,2+\frac{r}{N},\ldots,2K-2,2K-2+\frac{r}{N}\right\} $,
		we have
		\[
		\begin{vmatrix}
			g_{0} & g_{\frac{r}{N}} & \cdots & g_{2K-2} & g_{2K-2+\frac{r}{N}} \\
			\lambda_{0}g_{0} & \lambda_{\frac{r}{N}}g_{\frac{r}{N}} & \cdots & \lambda_{2K-2}g_{2K-2} & \lambda_{2K-2+\frac{r}{N}}g_{2K-2+\frac{r}{N}} \\
			\lambda^{2}_{0}g_{0} & \lambda^{2}_{\frac{r}{N}}g_{\frac{r}{N}} & \cdots & \lambda^{2}_{2K-2}g_{2K-2} & \lambda^{2}_{2K-2+\frac{r}{N}}g_{2K-2+\frac{r}{N}}  \\
			\vdots & \vdots & \cdots & \vdots & \vdots  \\
			\lambda^{2K-2}_{0}g_{0} & \lambda^{2K-2}_{\frac{r}{N}}g_{\frac{r}{N}} & \cdots & \lambda^{2K-2}_{2K-2}g_{2K-2} & \lambda^{2K-2}_{2K-2+\frac{r}{N}}g_{2K-2+\frac{r}{N}}  \\
			\lambda^{2K-1}_{0}g_{0} & \lambda^{2K-1}_{\frac{r}{N}}g_{\frac{r}{N}} & \cdots & \lambda^{2K-1}_{2K-2}g_{2K-2} & \lambda^{2K-1}_{2K-2+\frac{r}{N}}g_{2K-2+\frac{r}{N}}  \\
		\end{vmatrix} \neq 0.
		\]
		Therefore, the columns in the above $2K \times 2K$ matrix are linearly independent.
		Now, for $\lambda = -2K,-2K+\frac{r}{N},\ldots,-2, -2+\frac{r}{N} $, the system \eqref{eq:3.22} can be expressed as
		\[
		\begin{bmatrix}
			\langle x_{-2K},g \rangle \\
			\langle x_{-2K+\frac{r}{N}},g \rangle \\
			\vdots \\
			\langle x_{-2},g \rangle \\
			\langle x_{-2+\frac{r}{N}},g \rangle
		\end{bmatrix} =
		\begin{bmatrix}
			\lambda^{2K-1}_{-2K}g_{-2K} & \lambda^{2K-1}_{-2K+\frac{r}{N}}g_{-2K+\frac{r}{N}} & \cdots & \lambda^{2K-1}_{-2+\frac{r}{N}}g_{-2+\frac{r}{N}} & b_{-2K} \\
			\lambda^{2K-2}_{-2K}g_{-2K} & \lambda^{2K-2}_{-2K+\frac{r}{N}}g_{-2K+\frac{r}{N}} & \cdots & \lambda^{2K-2}_{-2+\frac{r}{N}}g_{-2+\frac{r}{N}} & b_{-2K+\frac{r}{N}} \\
			\vdots & \vdots & \cdots  & \vdots & \vdots \\
			g_{-2K} & g_{-2K+\frac{r}{N}} & \cdots  & g_{-2+\frac{r}{N}} & 0 \\
			\lambda_{-2K}g_{-2K} & \lambda_{-2K+\frac{r}{N}}g_{-2K+\frac{r}{N}} & \cdots  & \lambda_{-2+\frac{r}{N}}g_{-2+\frac{r}{N}} & b_{-2+\frac{r}{N}}
		\end{bmatrix}
		\begin{bmatrix}
			d_{-2K}\\
			d_{-2K+\frac{r}{N}} \\
			\vdots \\
			d_{-2+\frac{r}{N}} \\
			c
		\end{bmatrix},
		\]
		where $ b_{i}= \langle w, K_{i}g \rangle, \,i \in \left\{-2K,-2K+\frac{r}{N},\ldots,-2, -2+\frac{r}{N}\right\} $. Again, the above matrix is of dimension $2K \times (2K+1)$ and since each $g_{i} \neq 0 ,\, i \in \left\{-2K,-2K+\frac{r}{N},\ldots,-2, -2+\frac{r}{N}\right\} $,
		we have
		\[
		\begin{vmatrix}
			\lambda^{2K-1}_{-2K}g_{-2K} & \lambda^{2K-1}_{-2K+\frac{r}{N}}g_{-2K+\frac{r}{N}} & \cdots & \lambda^{2K-1}_{-2}g_{-2} & \lambda^{2K-1}_{-2+\frac{r}{N}}g_{-2+\frac{r}{N}}  \\
			\lambda^{2K-2}_{-2K}g_{-2K} & \lambda^{2K-2}_{-2K+\frac{r}{N}}g_{-2K+\frac{r}{N}} & \cdots & \lambda^{2K-2}_{-2}g_{-2} & \lambda^{2K-2}_{-2+\frac{r}{N}}g_{-2+\frac{r}{N}} \\
			\vdots & \vdots & \cdots & \vdots & \vdots  \\
			g_{-2K} & g_{-2K+\frac{r}{N}} & \cdots & g_{-2} & g_{-2+\frac{r}{N}}  \\
			\lambda_{-2K}g_{-2K} & \lambda_{-2K+\frac{r}{N}}g_{-2K+\frac{r}{N}} & \cdots & \lambda_{-2}g_{-2} & \lambda_{-2+\frac{r}{N}}g_{-2+\frac{r}{N}}
		\end{vmatrix} \neq 0.
		\]
		Therefore, the columns in the above $2K \times 2K$ matrix are also linearly independent. Thus for any $c \neq 0$, there exists a unique solution $x_{0}$ and $x_{-2}$ of the form \eqref{eq:3.23} such that
		\[
		\begin{bmatrix}
			g_{0} & g_{\frac{r}{N}} & \cdots & g_{2K-2} & g_{2K-2+\frac{r}{N}} & 0 \\
			\lambda_{0}g_{0} & \lambda_{\frac{r}{N}}g_{\frac{r}{N}} & \cdots & \lambda_{2K-2}g_{2K-2} & \lambda_{2K-2+\frac{r}{N}}g_{2K-2+\frac{r}{N}} & b_{\frac{r}{N}} \\
			\lambda^{2}_{0}g_{0} & \lambda^{2}_{\frac{r}{N}}g_{\frac{r}{N}} & \cdots & \lambda^{2}_{2K-2}g_{2K-2} & \lambda^{2}_{2K-2+\frac{r}{N}}g_{2K-2+\frac{r}{N}} & b_{2} \\
			\vdots & \vdots & \cdots & \vdots & \vdots & \vdots \\
			\lambda^{2K-2}_{0}g_{0} & \lambda^{2K-2}_{\frac{r}{N}}g_{\frac{r}{N}} & \cdots & \lambda^{2K-2}_{2K-2}g_{2K-2} & \lambda^{2K-2}_{2K-2+\frac{r}{N}}g_{2N-2+\frac{r}{N}} & b_{2K-2} \\
			\lambda^{2K-1}_{0}g_{0} & \lambda^{2K-1}_{\frac{r}{N}}g_{\frac{r}{N}} & \cdots & \lambda^{2K-1}_{2K-2}g_{2K-2} & \lambda^{2K-1}_{2K-2+\frac{r}{N}}g_{2K-2+\frac{r}{N}} & b_{2K-2+\frac{r}{N}} \\
		\end{bmatrix}
		\begin{bmatrix}
			a_{0} \\
			a_{\frac{r}{N}} \\
			a_{2} \\
			\vdots \\
			a_{2K-2+\frac{r}{N}} \\
			c
		\end{bmatrix} =
		\begin{bmatrix}
			0 \\
			0\\
			0 \\
			\vdots \\
			0 \\
			0
		\end{bmatrix},
		\]
		and
		\[
		\begin{bmatrix}
			\lambda^{2K-1}_{-2K}g_{-2K} & \lambda^{2K-1}_{-2K+\frac{r}{N}}g_{-2K+\frac{r}{N}} & \cdots & \lambda^{2K-1}_{-2}g_{-2} & \lambda^{2K-1}_{-2+\frac{r}{N}}g_{-2+\frac{r}{N}} & b_{-2K} \\
			\lambda^{2K-2}_{-2K}g_{-2K} & \lambda^{2K-2}_{-2K+\frac{r}{N}}g_{-2K+\frac{r}{N}} & \cdots & \lambda^{2K-2}_{-2}g_{-2} & \lambda^{2K-2}_{-2+\frac{r}{N}}g_{-2+\frac{r}{N}} & b_{-2K+\frac{r}{N}} \\
			\vdots & \vdots & \cdots & \vdots & \vdots & \vdots \\
			g_{-2K} & g_{-2K+\frac{r}{N}} & \cdots & g_{-2} & g_{-2+\frac{r}{N}} & 0 \\
			\lambda_{-2K}g_{-2K} & \lambda_{-2K+\frac{r}{N}}g_{-2K+\frac{r}{N}} & \cdots & \lambda_{-2}g_{-2} & \lambda_{-2+\frac{r}{N}}g_{-2+\frac{r}{N}} & b_{-2+\frac{r}{N}}
		\end{bmatrix}
		\begin{bmatrix}
			d_{-2K} \\
			d_{-2K+\frac{r}{N}} \\
			\vdots \\
			d_{-2+\frac{r}{N}} \\
			c
		\end{bmatrix} =
		\begin{bmatrix}
			0 \\
			0 \\
			\vdots \\
			0\\
			0
		\end{bmatrix}.
		\]
		Therefore, with the above choice of $x_{0}$ and $x_{-2}$, we necessarily have
		\begin{align*}
			\langle x_{-2K},g \rangle =
			\langle x_{-2K+\frac{r}{N}},g \rangle =
			\cdots =
			% \langle x_{-2},g \rangle =
			\langle x_{-2+\frac{r}{N}},g \rangle= 	\langle x_{0},g \rangle =
			\langle x_{\frac{r}{N}},g \rangle =
			%  \langle x_{2 },g \rangle =
			\cdots =
			% \langle x_{2K-2},g \rangle =
			\langle x_{2K-2+\frac{r}{N}},g \rangle =0.
		\end{align*}
		Hence, it is not possible to stably recover the source term $cw$ if we only have the first $|[2K]|$ samples of measurements.
		
	\end{example}
	
		\subsection{Infinite Time Iterations}\label{Sec.V}
		In this subsection, we investigate the case in which the data matrix $D(x_{0},x_{-2},w)$ entails infinitely many time samples and $W \subsetneq \mathcal{H}$. In this case, all the data measurements are carried out in the space $\mathcal{B}^{s}(\ell^2(\Lambda),\ell^\infty(\Lambda))$.
		
	We commence this section by establishing the following lemma which affirms that the data matrix for infinite time iterations belongs to the space $\mathcal{B}^{s}(\ell^2(\Lambda),\ell^\infty(\Lambda))$.
		\begin{lemma}\label{L3.16}
			Let $\{g_j\}_{j \in \Lambda} \subset \mathcal{H}$ be a Bessel sequence with Bessel bound $\beta>0$.
		Then, for the non-uniform discrete dynamical system \eqref{Generalized Dynamical System} $(\mathcal{H},W,\mathscr{F},S)$, with any initial states $x_{0},x_{-2} \in  \mathcal{H}$, the data matrix $D(x_{0},x_{-2},w)=[\langle x_{\lambda},g_{j}\rangle]_{\lambda,\,j \in \Lambda} \in \mathcal{B}^{s}(\ell^2(\Lambda),\ell^\infty(\Lambda))$.
	\end{lemma}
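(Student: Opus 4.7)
The plan is to establish two things for $D(x_0,x_{-2},w)=[\langle x_\lambda,g_j\rangle]_{\lambda,j\in\Lambda}$: first, that it lies in $\mathcal{B}(\ell^2(\Lambda),\ell^\infty(\Lambda))$, and second, that its rows converge in $\ell^2(\Lambda)$ to a fixed vector $z$, so that Lemma \ref{L3.6} delivers membership in $\mathcal{B}^{s}(\ell^2(\Lambda),\ell^\infty(\Lambda))$. The natural candidate for $z$ is $z=(\langle S(w),g_j\rangle)_{j\in\Lambda}$, where $S$ is the stationary mapping operator from Definition \ref{Def:2.3}.

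For the first step, fix $\lambda\in\Lambda$ and let $r_\lambda=(\langle x_\lambda,g_j\rangle)_{j\in\Lambda}$ be the $\lambda$-th row of $D$. The Bessel inequality immediately gives
\begin{align*}
\|r_\lambda\|_{\ell^2(\Lambda)}^{2}=\sum_{j\in\Lambda}|\langle x_\lambda,g_j\rangle|^2\le \beta\,\|x_\lambda\|_{\mathcal{H}}^{2}.
\end{align*}
Property (iii) of the quadruple $(\mathcal{H},W,\mathscr{F},S)$ says $x_\lambda\to S(w)$ in $\mathcal{H}$ as $|\lambda|\to\infty$, so $\{x_\lambda\}_{\lambda\in\Lambda}$ is bounded in $\mathcal{H}$. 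Hence $\sup_{\lambda\in\Lambda}\|r_\lambda\|_{\ell^2(\Lambda)}\le\sqrt{\beta}\sup_{\lambda\in\Lambda}\|x_\lambda\|_{\mathcal{H}}<\infty$, proving that $D\in\mathcal{B}(\ell^2(\Lambda),\ell^\infty(\Lambda))$.

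For the second step, with $z=(\langle S(w),g_j\rangle)_{j\in\Lambda}$, note that $z\in\ell^2(\Lambda)$ because $\|z\|_{\ell^2(\Lambda)}^{2}\le\beta\|S(w)\|_{\mathcal{H}}^{2}$ by Bessel. Applying Bessel once more to $x_\lambda-S(w)\in\mathcal{H}$,
\begin{align*}
\|r_\lambda-z\|_{\ell^2(\Lambda)}^{2}=\sum_{j\in\Lambda}|\langle x_\lambda-S(w),g_j\rangle|^2\le\beta\,\|x_\lambda-S(w)\|_{\mathcal{H}}^{2}\xrightarrow[|\lambda|\to\infty]{}0,
\end{align*}
again by property (iii). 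Thus the rows of $D$ converge to $z$ in $\ell^2(\Lambda)$-norm, and Lemma \ref{L3.6} yields $D\in\mathcal{B}^{s}(\ell^2(\Lambda),\ell^\infty(\Lambda))$.

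There is essentially no obstacle here beyond recognizing that property (iii) of the generalized system is precisely the ingredient needed to turn norm convergence of states into $\ell^2$-convergence of measurement rows, via the Bessel inequality; the boundedness of $\{\|x_\lambda\|_{\mathcal{H}}\}$ required for the $\ell^\infty$ part is a free consequence of convergence. The only mild care is in choosing the correct limit vector $z$, for which the stationary mapping $S(w)$ is forced by (iii).
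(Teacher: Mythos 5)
Your proposal is correct and follows essentially the same route as the paper: identify the limit row $z=(\langle S(w),g_j\rangle)_{j\in\Lambda}$, use property (iii) together with the Bessel bound to get $\norm{r_\lambda - z}_{\ell^2(\Lambda)}^2 \le \beta\norm{x_\lambda - S(w)}_{\mathcal{H}}^2 \to 0$, and invoke Lemma \ref{L3.6}. Your explicit verification that $D\in\mathcal{B}(\ell^2(\Lambda),\ell^\infty(\Lambda))$ via the uniform boundedness of $\{x_\lambda\}$ is a small point the paper leaves implicit, but it does not change the substance of the argument.
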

	
	\begin{proof}
		By supposition, the states $\{x_{\lambda}\}$ of the dynamical system \eqref{Generalized Dynamical System} satisfy
		\begin{align}
			\norm{x_{\lambda}-S(w)}_{\mathcal{H}} \longrightarrow 0 \; \text{as} \; \abs{\lambda} \longrightarrow \infty.
			\label{eq:3.24}
		\end{align}
		The $\lambda$-th row of $D(x_{0},x_{-2},w)$ is
	%	\begin{align*}
			$ r_{\lambda}=(\langle x_{\lambda},g_{j}\rangle)_{\lambda,\,j \in \Lambda} $.
	%	\end{align*}
		Let
	%	\begin{align*}
			$	r=(\langle S(w),g_{j}\rangle)_{\lambda,\,j \in \Lambda}$.
	%	\end{align*}
	Since
		$\{g_j\}_{j \in \Lambda} \subset \mathcal{H}$  is a Bessel sequence with Bessel bound $\beta>0$.
		By using the equation \eqref{eq:3.24}, we have
		\begin{align}
			\notag	{\norm{r_{\lambda}-r}}^{2}_{\ell^2(\Lambda)}=&\sum_{j \in \Lambda}\abs{\langle x_{\lambda},g_{j}\rangle -\langle S(w),g_{j}\rangle}^{2}\\ \notag
			=&\sum_{j \in \Lambda}\abs{\langle x_{\lambda}-S(w),g_{j}\rangle}^{2}\\
			\label{eq:3.25}
			\leq & \beta	{\norm{x_{\lambda}-S(w)}}^{2}_{\mathcal{H}} \\
			\notag
			\longrightarrow & \, 0 \,\, \text{as} \,\, \abs{\lambda} \rightarrow \infty.
		\end{align}
		This implies,
	%	\begin{align*}
		$	r_{\lambda} \longrightarrow r \,\, \text{as} \,\, \abs{\lambda} \rightarrow \infty $.
	%	\end{align*}
		Hence, by using Lemma \ref{L3.6}, we have
		\begin{align*}
		D(x_{0},x_{-2},w) \in \mathcal{B}^{s}(\ell^2(\Lambda),\ell^\infty(\Lambda)).
		\end{align*}
	\end{proof}
	
	Motivated by \cite[Theorem 3.4]{CFSRDS}, the next theorem betokens the characterization for the stable recovery of the source term of the non-uniform discrete dynamical system \eqref{Generalized Dynamical System} from the data measurements $D(x_{0},x_{-2},w)$.
	
	\begin{theorem}\label{Th3.17}
Suppose
\begin{enumerate}
  \item  $\{g_j\}_{j \in \Lambda} \subset \mathcal{H} $ is a Bessel sequence with  Bessel bound $\beta>0$.
  \item  $W$ is  a closed subspace of $\mathcal{H}$.
\end{enumerate}
		Then, for the non-uniform discrete dynamical system \eqref{Generalized Dynamical System} $(\mathcal{H},W,\mathscr{F},S)$ with the assumption that $\mathscr{F}$ is linear, each  source term
		$w \in W$ can be stably recovered from the measurements $D(x_{0},x_{-2},w)=[\langle x_{\lambda},g_{j}\rangle]_{\lambda,\,j \in \Lambda}$ if and only if $\{S^*g_{j}\}_{j \in \Lambda}$ is a frame for $W$.
	\end{theorem}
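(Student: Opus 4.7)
The plan is to establish the equivalence by handling the two implications separately, using the stationary trick from property (i) of $(\mathcal{H},W,\mathscr{F},S)$ in the forward direction and an explicit construction via the canonical dual frame in the converse.

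For $(\Rightarrow)$, assume there is a bounded linear $\mathscr{R}:\mathcal{B}^{s}(\ell^2(\Lambda),\ell^\infty(\Lambda))\to\mathcal{H}$ with $\mathscr{R}(D(x_{0},x_{-2},w))=w$ for all admissible triples. The central step is to select, for a given $w\in W$, the pair of initial states $x_{0}=x_{0}(w)$ and $x_{-2}=x_{-2}(w)$ provided by property (i), so that $x_{\lambda}=S(w)$ for every $\lambda\in\Lambda$. With this choice every row of the data matrix equals $(\langle S(w),g_{j}\rangle)_{j\in\Lambda}=(\langle w,S^{*}g_{j}\rangle)_{j\in\Lambda}$, whence by Lemma \ref{L3.4},
\begin{align*}
\norm{D(x_{0}(w),x_{-2}(w),w)}_{\ell^2(\Lambda)\to\ell^{\infty}(\Lambda)} = \Big(\sum_{j\in\Lambda}\abs{\langle w,S^{*}g_{j}\rangle}^{2}\Big)^{1/2}.
\end{align*}
Boundedness of $\mathscr{R}$ then yields $\norm{w}_{\mathcal{H}}^{2}\leq \norm{\mathscr{R}}^{2}\sum_{j\in\Lambda}\abs{\langle w,S^{*}g_{j}\rangle}^{2}$, which is the lower frame inequality on $W$. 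The upper inequality is immediate: $\sum_{j\in\Lambda}\abs{\langle w,S^{*}g_{j}\rangle}^{2}=\sum_{j\in\Lambda}\abs{\langle S(w),g_{j}\rangle}^{2}\leq \beta\norm{S}^{2}\norm{w}_{\mathcal{H}}^{2}$.

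For $(\Leftarrow)$, assume $\{S^{*}g_{j}\}_{j\in\Lambda}$ is a frame for $W$ with lower bound $\alpha>0$, and let $\{\widetilde{S^{*}g_{j}}\}_{j\in\Lambda}\subset W$ be its canonical dual. By Lemma \ref{L3.6}, every $T=[a_{ij}]\in\mathcal{B}^{s}(\ell^2(\Lambda),\ell^\infty(\Lambda))$ admits an $\ell^2(\Lambda)$-limit of rows $z(T):=\lim_{\abs{i}\to\infty}r_{i}$, and in particular $\norm{z(T)}_{\ell^2(\Lambda)}\leq \norm{T}_{\ell^2(\Lambda)\to\ell^{\infty}(\Lambda)}$. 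Define
\begin{align*}
\mathscr{R}:\mathcal{B}^{s}(\ell^2(\Lambda),\ell^\infty(\Lambda))\longrightarrow \mathcal{H},\qquad \mathscr{R}(T):=\sum_{j\in\Lambda}z(T)_{j}\,\widetilde{S^{*}g_{j}}.
\end{align*}
Linearity is clear, and since $\{\widetilde{S^{*}g_{j}}\}$ is a Bessel sequence in $W$ with bound $1/\alpha$, one gets the bound $\norm{\mathscr{R}(T)}_{\mathcal{H}}\leq \tfrac{1}{\sqrt{\alpha}}\norm{z(T)}_{\ell^2(\Lambda)}\leq \tfrac{1}{\sqrt{\alpha}}\norm{T}_{\ell^2(\Lambda)\to\ell^{\infty}(\Lambda)}$. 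Finally, for $T=D(x_{0},x_{-2},w)$ the argument in Lemma \ref{L3.16} (compare \eqref{eq:3.25}) gives $z(T)=(\langle S(w),g_{j}\rangle)_{j\in\Lambda}=(\langle w,S^{*}g_{j}\rangle)_{j\in\Lambda}$, so frame reconstruction inside $W$ produces $\mathscr{R}(D(x_{0},x_{-2},w))=\sum_{j\in\Lambda}\langle w,S^{*}g_{j}\rangle\widetilde{S^{*}g_{j}}=w$.

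The main technical care lies in two places. First, in the necessity direction, one must justify that the stationary-state choice is legitimate within the stable-recovery hypothesis and that the resulting row-constant matrix realizes the norm exactly (which is why Lemma \ref{L3.4} is invoked). Second, in the sufficiency direction, the reconstruction operator must be defined on the whole space $\mathcal{B}^{s}(\ell^2(\Lambda),\ell^\infty(\Lambda))$ rather than only on the image of $D$; this is possible precisely because Lemma \ref{L3.6} guarantees existence of the row-limit $z(T)$ throughout $\mathcal{B}^{s}(\ell^2(\Lambda),\ell^\infty(\Lambda))$, and the linearity hypothesis on $\mathscr{F}$ is what keeps the data operator $D$ linear so that this abstractly defined $\mathscr{R}$ indeed inverts it on every triple $(x_{0},x_{-2},w)$.
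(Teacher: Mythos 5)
Your proposal is correct and follows essentially the same route as the paper: the necessity direction uses the stationary-state choice $x_{0}=x_{-2}$ with $x_{\lambda}\equiv S(w)$ to extract the lower frame bound for $\{S^{*}g_{j}\}_{j\in\Lambda}$, and the sufficiency direction reconstructs $w$ by applying a dual frame of $\{S^{*}g_{j}\}_{j\in\Lambda}$ to the row-limit of the data matrix. The only cosmetic differences are that you fix the canonical dual and bound $\norm{\mathscr{R}}$ directly by $1/\sqrt{\alpha}$, whereas the paper works with an arbitrary dual frame and obtains boundedness by appealing to Theorem \ref{Th3.8}.
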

	
	\begin{proof}
		For any $x_{0},x_{-2} \in \mathcal{H}$ and any source term $w \in W$,
	%	$$\mathscr{D}(x_{0},x_{-2},w)=[\langle x_{\lambda},g_{j}\rangle]_{\lambda,j \in \Lambda}$$
		by using Lemma \ref{L3.16}, the data matrix
		$D(x_{0},x_{-2},w) \in \mathcal{B}^{s}(\ell^2(\Lambda),\ell^\infty(\Lambda))$. First, assume that each  source term
		$w \in W$ can be stably recovered from the measurements $D(x_{0},x_{-2},w)=[\langle x_{\lambda},g_{j}\rangle]_{\lambda,\,j \in \Lambda}$. That is, there exits a bounded linear operator
		$\mathscr{R}:\mathcal{B}^{s}(\ell^2(\Lambda),\ell^\infty(\Lambda)) \longrightarrow \mathcal{H}$
		such that
		\begin{align*}
				\mathscr{R}(D(x_{0},x_{-2},w))=w \; \text{for all} \, x_{0},x_{-2} \in \mathcal{H}, w \in W.
		\end{align*}
	Hence,
		\begin{align}
			\norm{w}_{\mathcal{H}} \leq \norm{\mathscr{R}}_{\mathcal{B}^{s}(\ell^2(\Lambda),\ell^\infty(\Lambda)) \longrightarrow \mathcal{H}} \sup_{\lambda \in \Lambda}{\Big(\sum_{j \in \Lambda}\abs{\langle x_{\lambda},g_{j}\rangle}^{2}\Big)}^{\frac{1}{2}}, \, x_{0},x_{-2} \in \mathcal{H}, w \in W.
			\label{eq:3.26}
		\end{align}
		Since $x_{0},x_{-2} \in \mathcal{H}$ are arbitary, so we can take $x_{0}=x_{-2}=S(w)$.
		Hence, $x_{\lambda}=S(w) \; \text{for all} \; \lambda \in \Lambda$ and the equation \eqref{eq:3.26} becomes
		\begin{align*}
			\norm{w}_{\mathcal{H}} \leq \norm{\mathscr{R}}_{\mathcal{B}^{s}(\ell^2(\Lambda),\ell^\infty(\Lambda)) \longrightarrow \mathcal{H}} {\Big(\sum_{j \in \Lambda}\abs{\langle S(w),g_{j}\rangle}^{2}\Big)}^{\frac{1}{2}}, \; w \in W,
		\end{align*}
	which can be rewritten as
		\begin{align*}
			{\norm{\mathscr{R}}}^{-2}_{\mathcal{B}^{s}(\ell^2(\Lambda),\ell^\infty(\Lambda)) \longrightarrow \mathcal{H}}{\norm{w}}^{2}_{\mathcal{H}}
			\leq &
			 % \sum_{j \in \Lambda}\abs{\langle S(w),g_{j}\rangle}^{2}, \; w \in W\\
			%&=& 	
			\sum_{j \in \Lambda}\abs{\langle w,S^{*}g_{j}\rangle}^{2}, \; w \in W.
			\end{align*}
		Finally, since $\{g_j\}_{j \in \Lambda}$ is  Bessel sequence  with Bessel bound $\beta>0$, we have
		\begin{align*}
		\sum_{j \in \Lambda}\abs{\langle w,S^{*}g_{j}\rangle}^{2} =\sum_{j \in \Lambda}\abs{\langle S(w),g_{j}\rangle}^{2} \leq \beta {\norm{S(w)}}^{2} \leq \beta{\norm{S}}^{2}_{W \longrightarrow \mathcal{H}} {\norm{w}}^{2}_{\mathcal{H}} \; \text{for all} \; w \in W.
		\end{align*}
		Hence,
		\begin{align*}
		A{\norm{w}}^{2}_{\mathcal{H}} \leq  \sum_{j \in \Lambda}\abs{\langle w,S^{*}g_{j}\rangle}^{2} \leq B{\norm{w}}^{2}_{\mathcal{H}} \; \text{for all} \; w \in W,
		\end{align*}
		with $A=	{\norm{\mathscr{R}}}^{-2}_{\mathcal{B}^{s}(\ell^2(\Lambda),\ell^\infty(\Lambda)) \longrightarrow \mathcal{H}}$ and $B=\beta{\norm{S}}^{2}_{W \longrightarrow \mathcal{H}}$.
		Therefore, $\{S^*g_{j}\}_{j \in \Lambda}$ is a frame for $W$.
		
		Conversely, let us assume that $\{S^*g_{j}\}_{j \in \Lambda}$ is a frame for $W$, and $\{\widetilde{g_j}\}_{j \in \Lambda} \subset W$  represents a dual frame of $\{S^*g_{j}\}_{j \in \Lambda}$. Then, by the definition of $D(x_{0},x_{-2},w)$, we have
		\begin{align*}
		(D(x_{0},x_{-2},w)\{\widetilde{g_j}\}_{j \in \Lambda})_{\lambda}= \sum_{j \in \Lambda} \langle x_{\lambda},g_{j} \rangle \widetilde{g_j}, \;\lambda \in \Lambda.
		\end{align*}
		Clearly,
		$(D(x_{0},x_{-2},w)\{\widetilde{g_j}\}_{j \in \Lambda})_{\lambda} \in W$. Now, by using the duality of $\{\widetilde{g_j}\}_{j \in \Lambda} \subset W$, we have
		\begin{align*}
				w=\sum_{j \in \Lambda} \langle w,S^{*}g_{j} \rangle \widetilde{g_j}.
			\end{align*}
	Hence, by using the equation \eqref{eq:3.25}, we compute
		\begin{eqnarray*}
			{\norm{(D\{\widetilde{g_j}\}_{j \in \Lambda})_{\lambda}-w}}^{2}_{\mathcal{H}} &=&	{\norm{\sum_{j \in \Lambda} \langle x_{\lambda},g_{j} \rangle \widetilde{g_j}-\sum_{j \in \Lambda} \langle w,S^{*}g_{j} \rangle \widetilde{g_j}}}^{2}_{\mathcal{H}}\\
			&=&{\norm{\sum_{j \in \Lambda}\Big(\langle x_{\lambda},g_{j} \rangle - \langle w,S^{*}g_{j} \rangle\Big) \widetilde{g_j}}}^{2}_{\mathcal{H}}\\
			& \leq & \beta'\sum_{j \in \Lambda}{\abs{\langle x_{\lambda},g_{j} \rangle - \langle w,S^{*}g_{j} \rangle}}^{2}\\
			&=&\beta'\sum_{j \in \Lambda}{\abs{\langle x_{\lambda}-S(w),g_{j} \rangle}}^{2}\\
			& \leq & \beta'\beta	{\norm{x_{\lambda}-S(w)}}^{2}_{\mathcal{H}} \longrightarrow 0 \; \text{as} \; \abs{\lambda} \rightarrow \infty,
		\end{eqnarray*}
		where $\beta'$ is the upper frame bound of $\{\widetilde{g_j}\}_{j \in \Lambda}$. This implies that $\mathscr{R}:\mathcal{B}^{s}(\ell^2(\Lambda),\ell^\infty(\Lambda)) \longrightarrow \mathcal{H} \quad \text{defined by} \quad
		\mathscr{R}(T)=\lim{T\{\widetilde{g_j}\}_{j \in \Lambda}} \quad \text{satisfies} \quad  \mathscr{R}(D(x_{0},x_{-2},w))=w$.
		Therefore, the conclusion follows from the Theorem \ref{Th3.8}.
	\end{proof}
	
	We now illustrate Theorem \ref{Th3.17} with the following example.
	
	\begin{example}
		 Let $\mathcal{H}=\ell^2(\Lambda)$ and $W=\text{span}\{e_{j}:j \in \Lambda,j \neq 0,-2\}$. Let $\{g_j\}_{j \in \Lambda}=\{e_{j}\}_{j \in \Lambda}$ be the standard orthonormal basis for $\ell^2(\Lambda)$ which form a Bessel sequence with optimal Bessel bound $1$. Let $A(e_{j})=	\begin{cases}
		 	e_{j}, & \; \text{if} \;j \neq 0, -2;\\
		 	2e_{j}, & \; \text{if} \; j \in \Lambda \setminus\{0,-2\}.
		 \end{cases}$ Then, $\rho(A) = 2$.
		Let $x_{0}=x_{-2}=(I-A)w=-w$. Define
	$S:W \longrightarrow \mathcal{H}$ by
		$S(w)=(I-A)w=-w$. Then, all the conditions of the non-uniform discrete dynamical system \eqref{Generalized Dynamical System} are satisfied.
		Also,
		\begin{align*}
			S^{*}(g_{j})=
			\begin{cases}
				0, & \; \text{if} \; j \neq 0, -2; \\
				-e_{j}, & \text{otherwise}.
			\end{cases}
		\end{align*}
		Thus, $\{S^{*}g_{j}\}_{j \in \Lambda}$ is a frame for $W$. Let $\{\tilde{g_{j}}\}_{j \in \Lambda}=	\begin{cases}
			0, & \; \text{if} \, j \neq 0, -2; \\
			-e_{j}, & \text{otherwise},
		\end{cases}$ be the canonical dual frame of $\{S^{*}g_{j}\}_{j \in \Lambda}$. Define  $\mathscr{R}:\mathcal{B}^{s}(\ell^2(\Lambda),\ell^\infty(\Lambda)) \longrightarrow \mathcal{H}$ by  $\mathscr{R}(T)=\lim{T\{\tilde{g_{j}}\}_{j \in \Lambda}}$.
		Then $\mathscr{R}$ is a bounded linear operator such that
		\begin{align*}
			\mathscr{R}(D)=&\lim{D\{\tilde{g_{j}}\}_{j \in \Lambda}}\\
			=& \lim_{\abs{\lambda} \rightarrow \infty}\sum_{j \in \Lambda}\langle x_{\lambda},g_{j}\rangle \tilde{g_{j}}\\
			=& \lim_{\abs{\lambda} \rightarrow \infty}\sum_{j \in \Lambda,j \neq 0,-2}\langle x_{\lambda},e_{j}\rangle (-e_{j})\\
			=& \sum_{j \in \Lambda,j \neq 0,-2}\langle \lim_{\abs{\lambda} \rightarrow \infty}x_{\lambda},e_{j}\rangle (-e_{j})\\
			=& \sum_{j \in \Lambda,j \neq 0,-2}\langle S(w),e_{j}\rangle (-e_{j})\\
			=& \sum_{j \in \Lambda,j \neq 0,-2}\langle -w,e_{j}\rangle (-e_{j})\\
			=& \sum_{j \in \Lambda,j \neq 0,-2}\langle w,e_{j}\rangle e_{j}\\
			=&w.
		\end{align*}
		Hence, $w$ can be recovered stably.
	\end{example}
	
		As a fruitage of Theorem \ref{Th3.17}, we characterize stable recovery for the source term of the non-uniform discrete dynamical system \eqref{Dynamical System} with the spectral radius $\rho(A)<1$.

	\begin{theorem}\label{Th3.19}
Suppose
\begin{enumerate}
  \item  $\{g_j\}_{j \in \Lambda} \subset \mathcal{H} $ is a Bessel sequence with  Bessel bound $\beta>0$.
  \item  $W$ is  a closed subspace of $\mathcal{H}$.
\end{enumerate}
 Then,  for the non-uniform discrete dynamical system \eqref{Dynamical System}, with $\rho(A)<1$,  each  source term $w \in W$ can be stably recovered from the measurements $D(x_{0},x_{-2},w)=[\langle x_{\lambda},g_{j}\rangle]_{\lambda,\,j \in \Lambda}$ if and only if  $\{P_{W}(I-{A^*})^{-1}g_{j}\}_{j \in \Lambda}$ is a frame for $W$.
	\end{theorem}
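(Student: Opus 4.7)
The plan is to derive Theorem \ref{Th3.19} as a direct consequence of Theorem \ref{Th3.17} by fitting the non-uniform discrete dynamical system \eqref{Dynamical System} into the generalized framework $(\mathcal{H}, W, \mathscr{F}, S)$ and computing the operator $S^{*}$ explicitly.

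First, I would exploit the hypothesis $\rho(A)<1$ to conclude that $I-A$ is invertible with $(I-A)^{-1}=\sum_{k=0}^{\infty}A^{k}$ (Neumann series). Using the closed-form expressions for the states $x_{\lambda}$ obtained in the proof of Proposition \ref{P3.11}, for instance for $\lambda\in 2\Z^{+}$,
\begin{align*}
x_{\lambda}=A^{\lambda}x_{0}+(I-A^{\lambda})(I-A)^{-1}w,
\end{align*}
and the analogous formulas for the other three branches, I would show that $\|A^{m}\|\to 0$ as $m\to\infty$ (which follows from $\rho(A)<1$ by Gelfand's formula), and therefore $x_{\lambda}\to (I-A)^{-1}w$ in $\mathcal{H}$ as $|\lambda|\to\infty$, regardless of the initial states $x_{0},x_{-2}\in\mathcal{H}$.

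Next, I would verify that \eqref{Dynamical System} realizes a generalized non-uniform discrete dynamical system $(\mathcal{H},W,\mathscr{F},S)$ in the sense of Definition \ref{Def:2.3}: the functional $\mathscr{F}_{\lambda}$ is affine (hence linear in the natural sense required by Theorem \ref{Th3.17}); the unique stationary pair corresponding to $w\in W$ is $x_{0}(w)=x_{-2}(w)=(I-A)^{-1}w$, so the stationary mapping is
\begin{align*}
S:W\longrightarrow \mathcal{H},\qquad S(w)=(I-A)^{-1}w,
\end{align*}
which is bounded since $(I-A)^{-1}\in\mathcal{B}(\mathcal{H})$; and the limit property (iii) is precisely what was established in the previous step. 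Hence Theorem \ref{Th3.17} applies, and stable recovery of $w\in W$ from $D(x_{0},x_{-2},w)$ is equivalent to $\{S^{*}g_{j}\}_{j\in\Lambda}$ being a frame for $W$.

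Finally, I would identify $S^{*}$. Since $S$ is the restriction of $(I-A)^{-1}$ to $W$, its adjoint $S^{*}:\mathcal{H}\to W$ is obtained by composing $((I-A)^{-1})^{*}=(I-A^{*})^{-1}$ with the orthogonal projection $P_{W}$, giving
\begin{align*}
S^{*}g_{j}=P_{W}(I-A^{*})^{-1}g_{j},\qquad j\in\Lambda.
\end{align*}
Substituting into the conclusion of Theorem \ref{Th3.17} yields the desired characterization. The main obstacle, as I see it, is purely bookkeeping: one must carefully confirm that the four piecewise branches of \eqref{Dynamical System} all yield the same limit $S(w)$ under $\rho(A)<1$, and that $\mathscr{F}$ as assembled from \eqref{Dynamical System} meets the linearity hypothesis of Theorem \ref{Th3.17}; once this is in place, the computation of $S^{*}$ and the invocation of Theorem \ref{Th3.17} are immediate.
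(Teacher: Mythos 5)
Your proposal is correct and follows essentially the same route as the paper: both reduce Theorem \ref{Th3.19} to Theorem \ref{Th3.17} by writing the states in closed form, using $\rho(A)<1$ to show $x_{\lambda}\to(I-A)^{-1}w$, identifying $S(w)=(I-A)^{-1}w$ as the stationary mapping, and computing $S^{*}=P_{W}(I-A^{*})^{-1}$. If anything, your Neumann-series/Gelfand justification for the boundedness of $(I-A)^{-1}$ is cleaner than the norm-inequality argument the paper offers for that step.
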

	
	\begin{proof}
		To prove the result, it is sufficient to prove that the conditions of Theorem \ref{Th3.17} are satisfied. Notice that the equation \eqref{Dynamical System} can be written as
		\[
		x_{\lambda}=
		\begin{cases}
			A^{\lambda}x_{0}+(I-A^{\lambda})(I-A)^{-1}w,\,\,\lambda \in  2\Z^{+};&\\
			A^{\lambda+1-\frac{r}{N}}x_{0}+(I-A^{\lambda+1-\frac{r}{N}})(I-A)^{-1}w,\,\,\lambda \in  2\Z^{+}+\frac{r}{N} \cup \left\{\frac{r}{N}\right\};&\\
			A^{-\lambda-2}x_{-2}+(I-A^{-\lambda-2})(I-A)^{-1}w,\,\,\,\lambda \in  2\Z^{-};&\\
			A^{-\lambda-1+\frac{r}{N}}x_{-2}+(I-A^{-\lambda-1+\frac{r}{N}})(I-A)^{-1}w,\,\,\lambda \in  2\Z^{-}+\frac{r}{N},
		\end{cases}
		\]
		which can be further simplified as
		\[
		x_{\lambda}-(I-A)^{-1}w=
		\begin{cases}
			A^{\lambda}\left(x_{0}-(I-A)^{-1}w\right),\,\,\lambda \in  2\Z^{+};&\\
			A^{\lambda+1-\frac{r}{N}}\left(x_{0}-(I-A)^{-1}w\right),\,\,\lambda \in  2\Z^{+}+\frac{r}{N} \cup \left\{\frac{r}{N}\right\};&\\
			A^{-\lambda-2}\left(x_{-2}-(I-A)^{-1}w\right),\,\,\lambda \in  2\Z^{-};&\\
			A^{-\lambda-1+\frac{r}{N}}\left(x_{-2}-(I-A)^{-1}w\right),\,\,\lambda \in  2\Z^{-}+\frac{r}{N}.
		\end{cases}
		\]
		Hence,
		\[
		\norm{x_{\lambda}-(I-A)^{-1}w}_{\mathcal{H}} \leq
		\begin{cases}
			\norm{A^{\lambda}}_{\mathcal{H} \rightarrow \mathcal{H}}\norm{x_{0}-(I-A)^{-1}w}_{\mathcal{H}},\,\,\lambda \in  2\Z^{+};&\\
			\norm{A^{\lambda+1-\frac{r}{N}}}_{\mathcal{H} \rightarrow \mathcal{H}}\norm{x_{0}-(I-A)^{-1}w}_{\mathcal{H}},\,\,\lambda \in  2\Z^{+}+\frac{r}{N} \cup \left\{\frac{r}{N}\right\};&\\
			\norm{A^{-\lambda-2}}_{\mathcal{H} \rightarrow \mathcal{H}}\norm{x_{-2}-(I-A)^{-1}w}_{\mathcal{H}},\,\, \lambda \in  2\Z^{-};&\\
			\norm{A^{-\lambda-1+\frac{r}{N}}}_{\mathcal{H} \rightarrow \mathcal{H}}\norm{x_{-2}-(I-A)^{-1}w}_{\mathcal{H}},\,\,\lambda \in  2\Z^{-}+\frac{r}{N}.
		\end{cases}
		\]
		By the assumption that $\rho(A)<1$, we have
		\begin{align*}
			 & \norm{A^{\lambda}}_{\mathcal{H} \rightarrow \mathcal{H}},	\norm{A^{\lambda+1-\frac{r}{N}}}_{\mathcal{H} \rightarrow \mathcal{H}} \longrightarrow 0 \; \text{as} \; \lambda \rightarrow \infty, \quad \text{and} \quad \\
				&	\norm{A^{-\lambda-2}}_{\mathcal{H} \rightarrow \mathcal{H}} , \norm{A^{-\lambda-1+\frac{r}{N}}}_{\mathcal{H} \rightarrow \mathcal{H}} \longrightarrow 0 \; \text{as} \; \lambda \rightarrow -\infty.
		\end{align*}	
		This implies,
		 %\begin{align*}
		$	\norm{x_{\lambda}-(I-A)^{-1}w}_{\mathcal{H}} \longrightarrow 0 \; \text{as} \; \abs{\lambda} \rightarrow \infty $.	
	%	\end{align*}
		This suggests that the pair of the stationary points of the non-uniform discrete dynamical system is uniquely determined by $S(w)=(I-A)^{-1}w$.
		In fact, if we take $x_{0}=x_{-2}=(I-A)^{-1}w$, then $x_{\lambda}=\frac{x_{0}+x_{-2}}{2} \; \text{for all} \; \lambda \in \Lambda$.
	Also,
		\begin{align*}
			\norm{(I-A)w} = \norm{Iw-Aw}
			 \geq  \abs{\norm{w}-\norm{Aw}}
			 \geq  \norm{w}.
		\end{align*}
Thus,  $(I-A)^{-1}$ exists and bounded. Therefore, $S:=(I-A)^{-1}\restriction_W$ is bounded and invertible. Also, the adjoint operator of $S$ is given by $S^{*} = P_{W}(I-A^{*})^{-1}$. Hence, the conclusion follows from Theorem \ref{Th3.17}.
	\end{proof}
	
The following example validates  Theorem \ref{Th3.19}.

          \begin{example}
	       Let $\mathcal{H}=\ell^2(\Lambda)$ and $W=span\{e_{0},e_{\frac{r}{N}}\}$. Let $\{g_j\}_{j \in \Lambda}=\{e_{j}\}_{j \in \Lambda}$ be the standard orthonormal basis for $\ell^2(\Lambda)$ which form a Bessel sequence with optimal Bessel bound $1$. Consider the non-uniform discrete dynamical system \eqref{Dynamical System} with $A(e_{j})=\frac{1}{4}e_{j} \; \text{for all} \; j \in \Lambda$. Then $\rho(A) < 1$.
	       Let $x_{0}=x_{-2}=(I-A)^{-1}w$.
Define $S:W \longrightarrow \mathcal{H}$ by
	       $S(w)=(I-A)^{-1}w=\frac{4}{3}w$.
	       Also,
\begin{align*}
P_{W}(I-A^{*})^{-1}(g_{j})=
	       \begin{cases}
	       	\frac{4}{3}e_{j}, & \; \text{if} \; j=0, \frac{r}{N}; \\
	       	0, & \text{otherwise}.
	       \end{cases}
\end{align*}
	       Thus, $\{P_{W}(I-A^{*})^{-1}(g_{j})\}_{j \in \Lambda}=\{\cdots,0,0,\frac{4}{3}e_{0},\frac{4}{3}e_{\frac{r}{N}},0,0, \cdots\}$ is a frame for $W$.

	       Consider the canonical dual $\{\tilde{g_{j}}\}_{j \in \Lambda}=\{\cdots,0,0,\frac{3}{4}e_{0},\frac{3}{4}e_{\frac{r}{N}},0,0, \cdots\}$ of $\{P_{W}(I-A^{*})^{-1}(g_{j})\}_{j \in \Lambda}$.
	       Define $\mathscr{R}:\mathcal{B}^{s}(\ell^2(\Lambda),\ell^\infty(\Lambda)) \longrightarrow \mathcal{H}$ by $\mathscr{R}(T)=\lim{T\{\tilde{g_{j}}\}_{j \in \Lambda}}$.
	       Then $\mathscr{R}$ is a bounded linear operator such that
	       \begin{align*}
	       	\mathscr{R}(D)=&\lim{D\{\tilde{g_{j}}\}_{j \in \Lambda}}\\
	       	=& \lim_{\abs{\lambda} \rightarrow \infty}\sum_{j \in \Lambda}\langle x_{\lambda},g_{j}\rangle \tilde{g_{j}}\\
	       	=& \lim_{\abs{\lambda} \rightarrow \infty}\sum_{j=0}^{\frac{r}{N}}\langle x_{\lambda},e_{j}\rangle \frac{3}{4}e_{j}\\
	       	=& \sum_{j=0}^{\frac{r}{N}}\langle \lim_{\abs{\lambda} \rightarrow \infty}x_{\lambda},e_{j}\rangle \frac{3}{4}e_{j}\\
	       	=& \sum_{j=0}^{\frac{r}{N}}\langle S(w),e_{j}\rangle \frac{3}{4}e_{j}\\
	       	=& \sum_{j=0}^{\frac{r}{N}}\langle \frac{4}{3}w,e_{j}\rangle \frac{3}{4}e_{j}\\
	       	=& \sum_{j=0}^{\frac{r}{N}}\langle w,e_{j}\rangle e_{j}\\
	       	=&w.
	       \end{align*}
	       Hence, $w$ can be recovered stably.
         \end{example}

$$\text{\textbf{Acknowledgement}}$$
 The  research of Ruchi is   supported by the University Grants Commission  under the Grant No.~21610054096.

\textbf{Ruchi}, Department of Mathematics,
		University of Delhi, Delhi-110007, India.\\
Email: rgarg@maths.du.ac.in\\

\textbf{Lalit Kumar Vashisht}, Department of Mathematics,
		University of Delhi, Delhi-110007, India.\\
Email: lalitkvashisht@gmail.com

\begin{thebibliography}{99}\baselineskip10pt
	
		\bibitem{DSIS}
	R.~Aceska, A.~Aldroubi, J.~Davis, and A.~Petrosyan, \textit{Dynamical Sampling in Shift-Invariant Spaces }, Commutative and Noncommutative Harmonic Analysis and	Applications, American Mathematical Society, Providence, RI, \textbf{603}(2013), 139--148.
	
\bibitem{DSTS}
 A.~Aldroubi, J.~Davis, I.~Krishtal, \textit{Dynamical sampling: Time–space trade-off}, Appl. Comput. Harmon. Anal., \textbf{34} (2013), 495--503.
	
\bibitem{ERSS}
A.~Aldroubi, J.~Davis, I.~Krishtal, \textit{Exact Reconstruction of Signals in Evolutionary Systems Via Spatiotemporal Trade-off}, J. Fourier Anal. Appl., \textbf{21} (2015), 11--31.
	
	
\bibitem{DSSIO}
A.~Aldroubi and A.~Petrosyan, \textit{Dynamical sampling and systems from iterative actions of operators}, In:arXiv preprint arXiv:1606.03136(2016).
		
\bibitem{KSMDS}
 A.~Aldroubi and I.~Krishtal, \textit{Krylov subspace methods in dynamical sampling}, Sampl. Theory Signal Image Process., \textbf{15} (2016), 9--20.
		
		
\bibitem{ICNO}
A.~Aldroubi, C.~Cabrelli, A.~F.~Çakmak,  U.~Molter,	A.~Petrosyan, \textit{Iterative actions of normal operators}, J. Funct. Anal., \textbf{272} (2017), 1121--1146.
		
\bibitem{DS}
 A.~Aldroubi, C.~Cabrelli, U.~Molter, S.~Tang, \textit{Dynamical sampling}, Appl. Comput. Harmon. Anal., \textbf{42} (2017), 378--401.
	
\bibitem{DSARN}
 A.~Aldroubi, L.~Huang, I.~Krishtal, A.~ Ledeczi, R.~R.~Lederman, and P.~Volgyesi, \textit{Dynamical sampling with additive random noise}, Sampl. Theory Signal Image Process., \textbf{17} (2018), 153--182.
		
\bibitem{SFBF}
A.~Aldroubi, K.~Grochenig, L.~Huang, P.~Jaming, I.~Krishtal, and J.~L.~Romero, \textit{Sampling the flow of a bandlimited function}, J. Geom. Anal., \textbf{31} (2021), 9241--9275.
	
\bibitem{RRDSE}
A.~Aldroubi, L.~Gong, I.~Krishtal, \textit{Recovery of rapidly decaying source terms from dynamical samples in evolution equations}, Sampl. Theory Signal Process. Data Anal., \textbf{21} (2023), Paper No.~15.
		
\bibitem{PADBF}
A.~Aldroubi, L.~Huang, K.~Kornelson, and I.~Krishtal, \textit{Predictive algorithms in dynamical sampling for burst-like forcing terms}, Appl. Comput. Harmon. Anal.,  \textbf{65} (2023), 322--347.
		
\bibitem{DSRSCS}
A.~Aldroubi, R.~Diaz Martín, I.~Medri, \textit{Dynamical sampling for the recovery of spatially constant source terms in dynamical systems}, Linear Algebra  Appl., \textbf{694} (2024), 148--185.
			
\bibitem{CFSRDS}
 A.~Aldroubi, R.~Diaz Martin, L.~Gong, J.~Mashreghi, I.~Medri, \textit{Characterization of frames  for  source recovery from dynamical samples}, In:arXiv preprint arXiv:2401.15450 (2024).

\bibitem{DSFIS}
C.~Cabrelli, U.~Molter, V.~Paternostro and F.~Philipp, \textit{Dynamical Sampling on finite index  sets}, J. Anal. Math., \textbf{140} (2020), 637--667.

\bibitem{ESADS}
J.~Cheng and S.~Tang, \textit{Estimate the spectrum of affine dynamical systems from partial observations of a single trajectory data}, Inverse Problems, \textbf{38} (2022),  Paper No.~015004, 1--42.

	\bibitem{CDDS}
R.~Diaz Martin, I.~Medri, and U.~Molter, \textit{Continuous and discrete dynamical sampling}, J. Math. Anal. Appl., \textbf{499} (2021), Paper No. 125060.

\bibitem{NHFS}
R.~J.~Duffin, A.~C.~Schaeffer, \textit{A class of nonharmonic Fourier series}, Trans. Amer. Math. Soc., \textbf{72} (1952), 341--366.

\bibitem{CSPDG} B.~Fuglede, \textit{Commuting self-adjoint partial differential operators and a group theoretic problem}, J. Funct. Anal., \textbf{16}
	(1974) 101--121.
	
\bibitem{NUMRA}
	J.~P.~Gabardo, M.~Z.~Nashed, \textit{Nonuniform Multiresolution Analyses and Spectral Pairs}, J. Funct. Anal., \textbf{158} (1998), 209--241.

\bibitem{JN}
	J.~P.~Gabardo and M.~Z.~Nashed, \textit{An analogue of Cohen's condition for nonuniform multiresolution analyses},
Contemp. Math.,  \textbf{216}  (1998), 41-–61.
	
\bibitem{BHan}
B. Han, \emph{Framelets and Wavelets: Algorithms, Analysis, and Applications},  Birkh\"auser,  Basel, 2017.

\bibitem{C.Heil} C.~Heil, \emph{A Basis Theory Primer}, expanded edition, Birkhäuser, New York, 2011.
		
 \bibitem{CDSR}
 L.~Huang, A.~M.~Neuman, S.~Tang, Y.~Xie, \textit{Convolutional dynamical sampling and some new results}, In:arXiv preprint arXiv:2406.15122 (2024).

 \bibitem{KPSA}
 A. Krivoshein, V. Yu. Protasov and M.  Skopina,  \textit{Multivariate Wavelet Frames}, Springer, Singapore, 2016.

 \bibitem{SSRS}
 Y.~Lu and M.~Vetterli, \textit{Spatial super-resolution of a diffusion field by temporal oversampling in sensor networks}, IEEE International Conference on Acoustics, Speech and Signal Processing, IEEE, NY, 2009, 2249--2252.
\bibitem{HK20}
H.~K.~Malhotra and   L.~K.~Vashisht,    \textit{Unitary extension principle for nonuniform wavelet frames in $L^2(\mathbb{R})$}, J. Math. Phys.  Anal.  Geom., \textbf{17} (1) (2021), 79--94.

\bibitem{HK21}
 H.~K.~Malhotra and L.~K.~Vashisht,  \textit{Construction of $P^{th}$-stage  nonuniform discrete wavelet frames},  Results Math., \textbf{76} (3) (2021), Art. No. 117, 30 pp.
 \bibitem{NPS}
 I. Ya. Novikov, V. Yu. Protasov and M. A.   Skopina,  \textit{Wavelet Theory},  Amer.  Math.  Soc., Providence, RI, 2011.
		
\bibitem{SRDL}
 J.~Ranieri, A.~Chebira, Y.~M.~Lu, M.~Vetterli, \textit{Sampling and reconstructing diffusion fields with localized sources}, IEEE International Conference on Acoustics, Speech and Signal Processing, IEEE, NY, 2011, 4016--4019.

 \bibitem{SIDS}
 S.~Tang, \textit{System identification in dynamical sampling}, Adv. Comput. Math., \textbf{43} (2017), 555--580.
	
\bibitem{DVNGF}
 L.~K.~Vashisht , H.~K.~Malhotra, \textit{Discrete vector-valued nonuniform Gabor frames}, Bull. Sci. Math., \textbf{178} (2022), 103--145.
		
\bibitem{Young}
R. M. Young, \emph{An Introduction to Nonharmonic Fourier Series}, Academic Press,
New York, 1980.
   \end{thebibliography}
\end{document}